\definecolor{myred}{rgb}{0.8,0,0}
\definecolor{myblue}{rgb}{0,0,0.8}
\definecolor{mygreen}{rgb}{0,0.6,0}
\newcommand{\drawHexFractal}[4]{%
    \begin{scope}[shift={({#1},{#2})}]
        \fill[gray!60] (30:{#3}) \foreach \a in {90,150,210,270,330,30} { -- (\a:{#3}) };
        \foreach \i/\col in {0/myred,1/mygreen,2/myblue,3/myred,4/mygreen,5/myblue} {
            \pgfmathsetmacro{\a}{30+60*\i}
            \pgfmathsetmacro{\b}{30+60*(\i+1)}
            \draw[line width=1pt,\col] (\a:{#3}) -- (\b:{#3});
        }
        \foreach \i in {0,...,5} {
            \pgfmathsetmacro{\a}{30+60*\i}
            \fill[black] (\a:{#3}) circle (1.3pt);
        }
        \ifnum#4>0
            \foreach \i in {0,...,5} {
                \pgfmathsetmacro{\angle}{30+60*\i}
                \pgfmathsetmacro{\cx}{2*#3*cos(\angle)}
                \pgfmathsetmacro{\cy}{2*#3*sin(\angle)}
                \draw[mygreen,thick] (\angle:{#3}) -- (\cx,\cy);
                \begin{scope}[shift={(\cx,\cy)}]
                    \pgfmathsetmacro{\r}{0.45*#3}
                    \pgfmathsetmacro{\parentCorner}{mod(\angle+180,360)}
                    \foreach \j in {0,...,5} {
                        \pgfmathsetmacro{\theta}{30+60*\j}
                        \ifdim\theta pt=\parentCorner pt\relax\else
                            \draw[thick,black] (\theta:\r) -- ++(\theta:{0.5*\r});
                        \fi
                    }
                \end{scope}
                \drawHexFractal{\cx}{\cy}{0.45*#3}{\numexpr#4-1\relax}
            }
        \fi
    \end{scope}
}
\def\d{\delta}
\def\i{{^{-1}}}
\def\subset{\subseteq}
\def\NN{\mathbb{N}}
\def\co{\mathcal{O}}
\def\ch{\mathcal{H}}
\renewcommand{\int}{\operatorname{int}}
\DeclareMathOperator{\supp}{supp}
\DeclareMathOperator{\Ad}{Ad}
\DeclareMathOperator{\cl}{cl}
\DeclareMathOperator{\Ch}{Ch}
\DeclareMathOperator{\Cay}{Cay}
\DeclareMathOperator{\Stab}{Stab}
\DeclareMathOperator{\Aut}{Aut}
\DeclareMathOperator{\id}{id}
\DeclareMathOperator{\fin}{fin}
\newtheorem{thm}{Theorem}[section] 
\newtheorem{prop}[thm]{Proposition}
\newtheorem{lem}[thm]{Lemma}
\newtheorem*{conj}{Conjecture}
\newtheorem{cor}[thm]{Corollary}
\newtheorem{namedthm}{Theorem}
\newenvironment{thmA}[1]{%
  \begin{namedthm}
}{%
  \end{namedthm}
}
\theoremstyle{definition}
\newtheorem{defn}[thm]{Definition}
\newtheorem{example}[thm]{Example}
\newtheorem*{ass}{Assumption}
\theoremstyle{remark}
\newtheorem*{rmk}{Remark}
\newtheorem*{claim*}{Claim}
\numberwithin{equation}{section}
\numberwithin{thm}{section}
\setlist[enumerate,1]{label=(\roman*), ref=\roman*}
\newcommand\restr[2]{\left.\kern-\nulldelimiterspace #1 \vphantom{\big|} \right|_{#2}}
\title{Centralizers in Hecke Algebras of Any Coxeter Group}
\author{Haiyu Chen}
\begin{document}
\maketitle
\begin{abstract}
We study the centralizer of a parabolic subalgebra in the Hecke algebra associated with an arbitrary (possibly infinite) Coxeter group. While the center and cocenter have been extensively studied in the finite and affine cases, much less is known in the indefinite setting. We describe a basis for the centralizer, generalizing known results about the center. Our approach combines algebraic techniques with geometric tools from the Davis complex, a CAT($0$)-space associated to the Coxeter group. As part of the construction, we classify finite partial conjugacy classes in infinite Coxeter groups and define a variant of the class polynomial adapted to the centralizer.
\end{abstract}

\section{Introduction}
\subsection{Background and Motivation}
Hecke algebras naturally arise in representation theory and play a central role in understanding the representations of finite groups of Lie type and $p$-adic groups. This paper studies a general version of Hecke algebras associated with arbitrary Coxeter groups.

Let $(W,S)$ be a Coxeter system with finite generating set $S$ and length function $\ell$. Let $R$ be a commutative ring. Suppose we are given a family of parameters $(a_s, b_s)_{s \in S}$ in $R$ such that if $s$ and $t$ are conjugate in $W$, then $(a_s, b_s) = (a_t, b_t)$. The \emph{generic Hecke algebra} $\mathcal{H} = \ch(W, S, (a_s, b_s)_{s \in S})$ is defined as the free $R$-module with basis $\{T_w\}_{w \in W}$ and multiplication rules
\[
\begin{aligned}
T_v T_w &= T_{vw} && \text{if } \ell(vw) = \ell(v) + \ell(w), \\
T_s^2 &= a_s T_s + b_s && \text{for all } s \in S.
\end{aligned}
\]
A notable special case is the Iwahori-Hecke algebra, obtained by setting $a_s = q - 1$ and $b_s = q$ for all $s \in S$ and a prime power $q$. Throughout, we assume all $b_s$ are invertible in $R$.

A guiding philosophy in representation theory is that ``characters tell everything.'' In analogy with group algebras, the character table of a Hecke algebra governs its irreducible representations. Structural subgroups such as the center and cocenter play pivotal roles: the center $Z(\mathcal{H})$ controls the number of irreducible representations, while the cocenter $\mathcal{H}/[\mathcal{H},\mathcal{H}]$ governs the space of trace functions. 

To generalize the notion of the center, we consider the centralizer of a parabolic subalgebra. Given $J \subseteq S$, let $\ch_J:=\mathcal{H}(W_J,J,(a_s,b_s)_{s\in J})$ be the subalgebra of $\ch$ generated by $\{T_s\}_{s \in J}$. The \emph{centralizer} of $\ch_J$ in $\ch$ is defined as
\[
\mathcal{Z}_{\ch}(\ch_J) := \{ h \in \ch \mid h x = x h \text{ for all } x \in \ch_J \}.
\]
When $J = S$, this recovers the center $Z(\mathcal{H})$. 

A comparison between the center and cocenter across different sizes of Coxeter groups reveals deep structural trends. In the finite case, they are both of rank $\sharp\cl(W)$ by \cite{geck2000characters}. For the affine case, the center has a basis indexed by dominant weights $P^+$ \cite{lusztig1989affine}, which are in bijection with the conjugacy classes of translations. It is further in bijection with the set of finite conjugacy classes in $W$. The cocenter \cite{he2014minimal} is indexed by all conjugacy classes $\cl(W)$. In the infinite non-affine case, the center is trivial \cite{marquis2024centre}, and the cocenter is conjectured as below. 
\begin{table}[H]
\centering
\begin{tabular}{|l|c|c|}
\hline
\textbf{Dimension} & \textbf{Center} & \textbf{Cocenter} \\
\hline
Finite Case & $\sharp \cl(W)$ & $\sharp \cl(W)$ \\
\hline
Affine Case & $\sharp P^+ = \sharp \cl^{\mathrm{fin}}(W)$\footnotemark & $\sharp \cl(W)$ \\
\hline
Infinite Non-Affine Case & Trivial & $\sharp \cl(W)$\textbf{\textcolor{red}{??}} \\
\hline
\end{tabular}
\caption{Center and cocenter comparison across different Coxeter group types}
\end{table}

This comparison indicates that when the group $W$ is larger, the center shrinks and the cocenter expands.  This work and \cite{partialcocenterADLV} on the following generalization reinforce this observation:

\begin{table}[h]
\centering
\begin{tabular}{|l|c|c|}
\hline
\textbf{Dimension} & \textbf{$\mathcal{Z}_{\mathcal{H}}(\mathcal{H}_J)$} & \textbf{$\mathcal{H}/[\mathcal{H}, \mathcal{H}_J]$} \\
\hline
General Coxeter Groups & $\sharp \cl_J^{\mathrm{fin}}(W)$ (\cref{thm:mainthmcentralizer}) & $\sharp \cl_J(W)$ (\cite{partialcocenterADLV}) \\
\hline
\end{tabular}
\caption{Comparison of partial centralizers and cocenters}
\end{table}

\subsection{Problem Setup and Main Results}

Let $\ch$ and $\ch_J$ be as above. The structure of $\mathcal{Z}_{\mathcal{H}}(\mathcal{H}_J)$ is investigated in three main steps, each centered around a key result:

\subsubsection*{Step 1: Classification of Finite $W_J$-Conjugacy Classes}

We first determine exactly which elements $w \in W$ have finite $W_J$-conjugacy classes. This is formalized in the following theorem, which may be of independent interest.

\begin{thmA}{A}[Classification of Finite $W_J$-Conjugacy Classes]
Suppose $(W,S)$ is an infinite Coxeter group and $J$ is irreducible. Then $\co_J(w)$ is finite only in the following cases:
\begin{enumerate}
    \item $J$ is spherical, i.e., $W_J$ is finite;
    \item $w \in W_{J^\perp}$;
    \item $J$ is affine, and $w = w_1 w_2$, $w_1 \in W_{J^\perp}$, $w_2 \in W_J$, and $w_2$ is a translation in $W_J$.
\end{enumerate}
Here, $J^\perp = \{ s \in S \setminus J \mid sj = js \text{ for all } j \in J \}$.
\end{thmA}

\subsubsection*{Step 2: Maximal Length Elements and Geometric Finiteness}

Two important concepts for analyzing Coxeter group conjugacy classes are cyclic shift and strong conjugation.

Roughly speaking, a cyclic shift on $w \in W$ means moving a generator from the start of a reduced expression of $w$ to the end or vice versa. We denotes $w \xrightarrow{s} w'$ if $w'=s w s$ and $\ell(w') \le \ell(w)$, $s \in S$, and denote $w \rightarrow w'$ for a sequence of such arrows.

Strong conjugation means moving the first few generators of a reduced expression of $w$ to the end while preserving the length, or vice versa. We denote strong conjugation by $w \sim w'$ when $\ell(w)=\ell(w')$ and there exists $x \in W$ such that $w'=x w x^{-1}$ and $\ell(x w)=\ell(x)+\ell(w)$ or $\ell(w x^{-1})=\ell(x)+\ell(w)$. 

A fundamental theorem for Coxeter groups states that every element can be cyclically shifted to minimal length in its conjugacy class, and any two minimal length elements are strongly conjugate \cite{marquis2018cyclically}. In this work, we develop the maximal version of this result:

\begin{thmA}{B}[Maximal Length Representatives]
Let $(W,S)$ be a Coxeter system. Let $\co_{J}$ be a $W_J$-conjugacy class of $W$. Then the following assertions hold:
    \begin{enumerate}
         \item If $\co_{J}$ is finite, then for each $u \in \co_{J}$, there exists a finite chain $u_k\xrightarrow{s_k}\cdots\xrightarrow{s_1}u$, where $u_k \in \co^{\max}_J$ and all $s_j \in J$.
         \item If $\co_{J}$ is infinite, then for each $u \in \co_{J}$, there exists an infinite chain of distinct elements $\cdots\xrightarrow{s_2}u_1\xrightarrow{s_1}u$, where all $s_j \in J$.
         \item For any $u, u' \in \co^{\max}_{J}$, we have $u \backsim_{J} u'$. (definition in \ref{defmaxequ})
    \end{enumerate}
\end{thmA}

One important step in dealing with infinite conjugacy classes is using metric geometry. We study the set
\[
U^+_J(w) := \{ w' \in W \mid w' \to_J w \}
\]
and show that it is finite if and only if $\co_J(w)$ is finite. This is achieved by applying tools from geometric group theory via the Davis complex.

The Coxeter complex $\Sigma$, constructed from cosets $\{wW_T\}$ and simplicial inclusions, encodes much of the algebraic structure of $W$. However, it is not locally finite when $W$ is infinite, preventing properly discontinuous group actions. Davis \cite{davis1994buildings} introduced a metric realization of $\Sigma$ into a complete CAT(0) space, called the \emph{Davis complex} $X$. This space is piecewise Euclidean, contractible, and admits a proper, cocompact action of $W$ by isometries. Convexity properties of displacement functions on $X$—a consequence of the CAT(0) geometry—are essential to our proof that $U^+_J(w)$ and $\co_J(w)$ have matching finiteness behavior.

\subsubsection*{Step 3: Construction of Centralizer Basis}

If $\co_J(w)$ is infinite, then the entire conjugacy class contributes nothing to the centralizer. Otherwise, we inductively define \( f^{\max}_{w,\mathcal{O}_J}\in R \), a variant of the class polynomials of Geck-Pfeiffer \cite{geck1993irreducible}, and construct a basis element for each finite $W_J$-conjugacy class.

\begin{thmA}{C}[Basis of the Centralizer]
The centralizer $\mathcal{Z}_{\mathcal{H}}(\mathcal{H}_J)$ is a free $R$-module. An $R$-basis is given by
\[
\left\{ z_{\mathcal{O}_J} = \sum_{w \in W} b_w^{-1} f^{\max}_{w, \mathcal{O}_J} T_{w^{-1}} \right\}_{\mathcal{O}_J},
\]
where the index $\mathcal{O}_J$ ranges over all \textbf{finite} $W_J$-conjugacy classes in $W$. Each sum is finite.
\end{thmA}

\subsubsection*{Conjecture on the Cocenter}

Motivated by our work, we propose the following conjecture, consistent with known results in the finite and affine settings \cite{geck1993irreducible, he2014minimal}, and a partial result is known in \cref{prop:cocenterbasis}:

\begin{conj}
The cocenter $\mathcal{H}/[\mathcal{H}, \mathcal{H}]$ has a basis indexed by the set of conjugacy classes $\cl(W)$.
\end{conj}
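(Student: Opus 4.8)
The plan is to dualize the centralizer construction of \cref{thm:mainthmcentralizer}, replacing maximal length by minimal length. The starting point is the minimal-length analogue of Theorem~B, namely the cyclic-shift theorem of Marquis \cite{marquis2018cyclically}: every $w \in W$ admits a chain $w \to w'$ of cyclic shifts terminating in a minimal length element of its conjugacy class $\co(w)$, and any two minimal length elements of $\co(w)$ are strongly conjugate. Fixing a minimal length representative $w_\co$ for each $\co \in \cl(W)$, the candidate basis is $\{\overline{T_{w_\co}}\}_{\co \in \cl(W)}$, where $\overline{(-)}$ denotes the image in the cocenter. As in Geck-Pfeiffer, one defines class polynomials $f_{w,\co} \in R$ inductively along cyclic shifts, so that $\overline{T_w} = \sum_{\co} f_{w,\co}\, \overline{T_{w_\co}}$.

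For the spanning direction I would compute the cocenter relations induced by a cyclic shift. Since $\overline{T_s T_w} = \overline{T_w T_s}$ for every $s \in S$, a case analysis on the lengths of $sw$, $ws$, $sws$ yields: if conjugation by $s$ preserves length, then $\overline{T_{sws}} = \overline{T_w}$; and if it strictly decreases length, then a Geck-Pfeiffer-type reduction expresses $\overline{T_w}$ through some $\overline{T_{w'}}$ with $\ell(w') < \ell(w)$, with coefficients built from $a_s, b_s$. Iterating along the Marquis chain reduces every $\overline{T_w}$ to the span of the $\overline{T_{w_\co}}$, so this family spans the cocenter. Independence of the chosen chain — well-definedness of $f_{w,\co}$ — follows from the strong-conjugacy statement, exactly as the $\sim_J$-equivalence in Theorem~B(3) underpins the centralizer construction. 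This spanning assertion is the content of the partial result \cref{prop:cocenterbasis}.

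The genuine difficulty, and the reason the statement remains a conjecture, is linear independence, i.e. the matching lower bound $\dim_R(\ch/[\ch,\ch]) \geq \#\cl(W)$. In the finite and affine settings this is supplied by the symmetrizing trace form, under which the cocenter is dual to the center and $\dim Z(\ch)$ is known \cite{geck1993irreducible, he2014minimal}. This route is unavailable in the infinite non-affine case precisely because $Z(\ch)$ is trivial \cite{marquis2024centre}, so the center-cocenter duality collapses and one must exhibit independence directly.

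The most promising substitute I would pursue is a length-filtration argument. Filtering $\ch$ by $\ell$, each graded piece is finite-dimensional (as $S$ is finite), and one analyzes the image of $[\ch,\ch]$ in the associated graded. The leading term of a commutator $T_v T_w - T_w T_v$ coming from a \emph{reduced} product is $T_{vw} - T_{wv}$, identifying the conjugate pair $vw$ and $wv = v^{-1}(vw)v$ of equal length; the hope is that modulo strictly shorter terms these are the only relations, so that $\mathrm{gr}_d(\ch/[\ch,\ch])$ acquires exactly one basis vector for each conjugacy class of minimal length $d$, yielding independence degree by degree. The crux, which I expect to be the main obstacle, is controlling the parameter-dependent lower-order corrections $a_s T_w + b_s T_{sw}$ produced by non-reduced products: one must verify that these do not conspire to collapse the images of minimal length representatives belonging to distinct conjugacy classes. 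Establishing this parameter-uniform non-degeneracy, in the absence of any symmetrizing form, is the essential gap between the spanning result and the full conjecture.
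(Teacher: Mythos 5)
The statement you are addressing is stated in the paper as a \emph{conjecture}: the paper offers no proof, only the partial result \cref{prop:cocenterbasis}, and your proposal — as you yourself concede in the final paragraph — does not prove it either. So the correct comparison is between your assessment of what is known and the paper's framing, and on that score you are largely accurate: spanning of the cocenter by minimal length representatives follows from the general cyclic-shift theorem (\cref{thm:mingeneral}, due to Marquis) together with the standard Geck--Pfeiffer reduction along length-decreasing shifts, and the genuinely open part is linear independence, for which the symmetrizing-form duality used in the finite and affine cases is unavailable precisely because $Z(\ch)$ is trivial in the infinite non-affine case. Your diagnosis of where the difficulty sits agrees with the paper.

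That said, two concrete inaccuracies in your write-up deserve correction. First, your claim that well-definedness of the class polynomials $f_{w,\co}$ ``follows from the strong-conjugacy statement'' is circular: as the paper remarks after the Geck--Pfeiffer proposition, uniqueness of the $f_{w,\co}$ is \emph{equivalent} to linear independence of $\{\overline{T_{w_\co}}\}_{\co}$ in the cocenter — that is, to the conjecture itself. Strong conjugacy only yields $\overline{T_w}=\overline{T_{w'}}$ for minimal length elements in the same class (the mechanism of \cref{lem:sameimg}), which suffices for spanning via any one chosen chain but says nothing about chain-independence of the coefficients. Second, \cref{prop:cocenterbasis} is not ``the spanning assertion'' for $\ch/[\ch,\ch]$: it is a full basis statement for the \emph{partial} cocenter $\ch/[\ch_J,\ch]$ with $J$ \emph{spherical}, and for infinite $W$ one cannot take $J=S$, so it does not specialize to the conjecture. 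Finally, on your proposed filtration argument: the subspace $[\ch,\ch]$ is not homogeneous for the length filtration, so $\mathrm{gr}(\ch/[\ch,\ch])$ is the quotient by the full module of leading terms of arbitrary $R$-linear combinations of commutators, which may strictly exceed the span of leading terms of individual commutators $T_vT_w-T_wT_v$; the danger is not only the parameter-dependent tails you flag, but cancellation among top terms of combinations producing unexpected low-degree leading terms that could identify distinct minimal length classes. Making this degree-by-degree analysis watertight is exactly the content of the open problem, so your proposal correctly locates, but does not close, the gap.
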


\subsection{Acknowledgement}
I am deeply grateful to Xuhua He for suggesting this problem and for his invaluable guidance throughout the project. I also thank Michael McBreen for his consistent support during my Ph.D. studies. I am sincerely thankful to Felix Schremmer for the insightful discussions and helpful suggestions.

\section{Preliminaries}

    Let $(W,S)$ be a Coxeter system with $S$ a finite set. The Coxeter group $W$ is equipped with the length function $\ell: W \to \NN$ and the Bruhat order $\le$. 
    
    The subgroups $W_J:=\langle J\rangle\subseteq W$ ($J\subseteq S$) are called \emph{standard parabolic subgroups} of $W$, and their conjugates are \emph{parabolic subgroups} of $W$. For $w\in W$, each $W_J$-coset $W_Jw$ contains a unique minimal length element $\min W_J w$. Let $^J W=\{\min W_J w\ |\ w\in W\}$. Moreover, $\ell( yx)=\ell(y)+\ell(x)$ for any $x\in {}^JW$, $y \in W_J$. The above notions can be similarly defined for $W^J$. For $K,K'\subset S$, we denote ${}^{K'}W^K:={}^{K'}W\cap W^K$.
    
        We say $J\subset S$ \emph{spherical} if $W_J$ is finite; otherwise we say $J$ is non-spherical. The reason is that a building is spherical if and only if its Weyl group is a finite Coxeter group. See, for example, \cite{abramenko2008buildings}. We say $J$ is \emph{affine} if $W_J$ is affine. We say $J$ (resp. $W_J$) is \emph{irreducible} if it does not decompose nontrivially as a disjoint union $J=J_1\sqcup J_2$ with $s_1s_2=s_2s_1$ for all $s_1\in J_1$ and $s_2\in J_2$. Denote $J^{\perp}:=\{s\in S\setminus J \ | \ st=ts \ \forall t\in J\}$. For $w\in W$, let $w\cdot J:=w J w \i$.

\subsection{Partial Cyclic Shift Classes}
    In 1993, Geck-Pfeiffer \cite{geck1993irreducible} pointed out that two special kinds of conjugation action are important in the study of Hecke algebras, namely cyclic shift and strong conjugation.
    Let $J\subset S$ be a subset.
    \begin{defn}
    
    \begin{enumerate}
        \item For $w, w' \in W$, we write $w \xrightarrow{s} w'$ if $w'=s w s$ and $\ell(w') \le \ell(w)$, $s \in S$; We write $w \xrightarrow{J} w'$ if there exists a sequence $w=w_0, w_1, \cdots, w_n=w'$ of elements in $W$ such that for any $k$, $w_{k-1} \xrightarrow{s} w_k$ for some $s \in J$.  We write $w \approx_{J} w'$ if $w \xrightarrow{J} w'$ and $w' \xrightarrow{J} w$, equivalently, $w \xrightarrow{J} w'$ and $\ell(w)=\ell(w')$.

        \item We call $w, w' \in W$ \emph{elementarily strongly $J$-conjugate} if $\ell(w)=\ell(w')$ and there exists $x \in W_J$ such that $w'=x w x^{-1}$ and $\ell(x w)=\ell(x)+\ell(w)$ or $\ell(w x^{-1})=\ell(x)+\ell(w)$. We call $w, w'$ {\it strongly $J$-conjugate} if there is a sequence $w=w_0, w_1, \cdots, w_n=w'$ such that for each $i$, $w_{i-1}$ is elementarily strongly $J$-conjugate to $w_i$. We write $w \sim_{J} w'$ if $w$ and $w'$ are strongly $J$-conjugate.

    \end{enumerate}  
    \end{defn}
    \begin{rmk}
    \begin{enumerate}
        \item In all notation above, we omit $J$ if $J=S$;
        \item Note that $\approx_J$ implies $\sim_J$.
    \end{enumerate}
        Both $\approx_{J}$ and $\sim_{J}$ are equivalence relations. The equivalence classes under $\approx_{J}$ are called the \emph{$J$-cyclic shift classes}. 
        
    \end{rmk}



\begin{defn} We set
    $U^+_J(w):=\{w'\in W \ | \ w'\xrightarrow{J} w\}.$
\end{defn}
Note that $U^+_J(w)\subset W_J\cdot w$.

An important result for the calculation is the following. Here we only review the classical result for the case of finite Coxeter groups and $J=S$. 

\begin{thm}[\cite{geck1993irreducible}]\label{thm:minimal_thm_finite}
    Let $(W,S)$ be a finite Coxeter group. Let $\co^{\min}$ be the set of minimal length elements in $\co$. Then
    \begin{enumerate}
        \item For any $w\in W$, $w\rightarrow w'$ for some $w'\in\co^{\min}$.
        \item For any two $w,w'\in\co^{\min}$, $w\sim w'$.
    \end{enumerate}
\end{thm}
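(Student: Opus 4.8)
The plan is to prove both parts by induction on length-theoretic quantities, using only the combinatorics of the Bruhat order on the finite group $W$; no geometry is needed here, since $W$ is finite and all its conjugacy classes are automatically finite. The single most useful elementary input is the trichotomy that for any $w\in W$ and $s\in S$ one has $\ell(sws)\in\{\ell(w),\ell(w)\pm 2\}$, together with its interpretation via the exchange/deletion condition: $\ell(sws)=\ell(w)-2$ forces $s$ to be a descent of $w$ on both the left and the right (a genuine double descent), while $\ell(sws)=\ell(w)$ is exactly the length-preserving case realizing an arrow $w\xrightarrow{s}sws$ with $w\approx sws$. Throughout I write $\ell_{\min}=\min\{\ell(v)\mid v\in\co\}$.

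For part (1) I would induct on $\ell(w)-\ell_{\min}$. If this excess is zero then $w\in\co^{\min}$ and there is nothing to prove. Otherwise the crux is the following key lemma: if $w\notin\co^{\min}$, then there exist $w'$ with $w\approx w'$ and $s\in S$ such that $\ell(sw's)=\ell(w')-2$. Granting this, the chain $w\to w'\xrightarrow{s}sw's$ lands in $\co$ at strictly smaller length, so $\ell(sw's)-\ell_{\min}<\ell(w)-\ell_{\min}$ and the inductive hypothesis applies to $sw's$. To prove the key lemma I would choose a conjugator $x$ of minimal length with $\ell(xwx^{-1})<\ell(w)$, peel off a descent of $x$ one generator at a time, and track how the length of the partially conjugated element changes; the point is to show that some single-generator conjugation performed within the length-preserving class $\{w'\mid w\approx w'\}$ uncovers a generator that is a double descent.

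For part (2), given $w,w'\in\co^{\min}$ write $w'=xwx^{-1}$ and induct on $\ell(x)$. Pick a left descent $s$ of $x$, so that $x=sy$ with $\ell(x)=\ell(y)+1$, and set $u=ywy^{-1}$, whence $w'=sus$. Since $w'$ has minimal length, $\ell(w')=\ell(sus)\le\ell(u)$, so $\ell(u)\in\{\ell(w'),\ell(w')+2\}$. In the first case $u$ is again minimal, the arrow $u\xrightarrow{s}w'$ is length-preserving, so $u\approx w'$ and hence $u\sim w'$; applying the inductive hypothesis to the shorter conjugator $y$ gives $w\sim u$, and transitivity yields $w\sim w'$. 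The remaining case $\ell(u)=\ell(w')+2$ is the main obstacle: here conjugation by the single generator $s$ strictly raises the length, so it is not an elementary strong conjugation and one cannot invoke the induction directly. I expect to resolve it by showing that the conjugator $x$ can always be chosen, or the path rerouted, so that this case never occurs — equivalently, that any two minimal elements are joined by a chain along which the length never exceeds $\ell_{\min}$. This is exactly the point where the definition of strong conjugation, which permits conjugation by non-simple elements with no length cancellation ($\ell(xw)=\ell(x)+\ell(w)$ or $\ell(wx^{-1})=\ell(w)+\ell(x)$), is indispensable for bridging minimal elements that are not linked by length-preserving cyclic shifts alone.

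In summary, both statements reduce to controlling the length-increasing conjugations: part (1) hinges on producing a double descent after a sequence of length-preserving shifts, while part (2) hinges on ruling out, or rerouting around, the length-raising single-generator step. These two combinatorial lemmas are the heart of the argument, and I expect the second to be the harder of the two; everything else is bookkeeping driven by the inductions on $\ell(w)-\ell_{\min}$ and on $\ell(x)$.
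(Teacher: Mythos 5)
Your proposal does not prove the theorem; it reduces it to two ``key lemmas'' and leaves both unproven, and those two lemmas \emph{are} the theorem. Note first that the paper itself offers no proof here: the statement is quoted from \cite{geck1993irreducible}, and its generalization to arbitrary Coxeter groups (\cref{thm:mingeneral}) is likewise quoted, resting on geometric methods (CAT($0$) geometry of the Davis complex). That is not an accident. For part (1), your lemma --- if $w\notin\co^{\min}$ then some $w'\approx w$ admits $s\in S$ with $\ell(sw's)=\ell(w')-2$ --- is precisely the Geck--Pfeiffer property, and the strategy you sketch (take a minimal-length conjugator $x$ with $\ell(xwx^{-1})<\ell(w)$ and peel off descents of $x$ one generator at a time) is the naive approach that does not go through: conjugating by a single descent generator of $x$ can strictly \emph{raise} the length of the partially conjugated element, and there is no elementary way to control this. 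The original proof in \cite{geck1993irreducible} is a case-by-case analysis over the classification of finite Coxeter groups (partly computer-assisted in the exceptional types); uniform proofs came only much later and use geometry rather than the kind of length bookkeeping you propose.

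For part (2) the gap is of the same nature but worse. Your induction on $\ell(x)$ stalls exactly at the case $\ell(u)=\ell(w')+2$, and your proposed fix --- that the conjugator can be chosen or the path rerouted so that this case never occurs, i.e.\ that any two minimal elements are joined by single-generator, length-preserving conjugations --- is false in general: there exist classes in which minimal elements lie in distinct cyclic shift classes, which is precisely why the statement is phrased in terms of strong conjugation, allowing conjugators $x$ of arbitrary length subject to $\ell(xw)=\ell(x)+\ell(w)$ or $\ell(wx^{-1})=\ell(x)+\ell(w)$. Producing such non-simple conjugators between minimal elements is again the deep content, and nothing in your outline constructs them. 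In short, your setup (the trichotomy $\ell(sws)\in\{\ell(w),\ell(w)\pm2\}$, the two inductions, the identification of where the difficulty sits) is correct but is only the frame; the picture inside it is missing, and the missing part is not fillable by the elementary arguments you anticipate.
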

It is later generalized to extended affine Weyl groups \cite{he2014minimal} and we will review this result for general Coxeter groups later (see \cref{thm:mingeneral}).

\subsection{Davis complex}
Basics on Davis complexes can be found in \cite{davis1994buildings,davis2008geometry} and also in \cite{marquis2020structure}.
\subsubsection{Coxeter complex}
A Coxeter complex $\Sigma=\Sigma(W,S)$ is a simplicial complex with simplices being the cosets $\{wW_I\ |\ w\in W,I\subsetneq S \}$, with face relation being opposite to the usual set inclusion $\subset$. The maximal simplices $wW_\emptyset=\{w\}$ are called \emph{chambers}. Denote by $\Ch(\Sigma)$ the set of chambers. The \emph{fundamental chamber} $C_0$ is the chamber corresponding $\{e\}$. Chambers are in bijection with elements in $W$. We sometimes use elements in $W$ to represent a chamber.

Any two \emph{adjacent} chambers are of the form $\{w\}$ and $\{ws\}$, where $w\in W, s\in S$, contains a \emph{panel} $m_s$ of type $s$, which is the 1-codimensional intersection of the chambers. In this case, we say they are \emph{$s$-adjacent}. A gallery $\Gamma=(D_0,D_1,\dots,D_k)$ of length $k$ is a sequence of chambers such that $D_{i-1}$ and $D_i$ are $s_i$-adjacent. We say $(s_1,s_2,\dots,s_k)$ is the \emph{type} of $\Gamma$. If $s_i\in J\subset S$ for all $i$, then we call $\Gamma$ to be a \emph{$J$-gallery}.

For any two chambers $C,D\in \Ch(\Sigma)$, the galleries from $C$ to $D$ are called \emph{minimal} if their lengths are minimal among all galleries from $C$ to $D$. We define the chamber distance $d_{\Ch}(C,D)$ to be the length of the minimal galleries between $C$ and $D$. For $C=vC_0$, $D=wC_0$, we have $d_{\Ch}(C,D)=\ell(v\i w)$. Now fix $w\in W$. Chambers in Coxeter complex encodes information on conjugates of $w$ through the following function $\pi_w:\Ch(\Sigma)\to W$, $v C_0\mapsto v\i wv$. For $D\in \Ch(\Sigma)$, the chamber distance $d_{\Ch}(D,wD)=\ell(\pi_w(D))$.

The group $W$ acts on $\Sigma$ by left multiplications, which are simplicial isometries of $\Sigma$. Elements of $s\in S$ act by \emph{simple reflections}, and their conjugates are \emph{reflections}. Each reflection $r=r_m$ has a fixed-point set $m$ called a \emph{wall}.  

We may visualize $\Sigma$ as a simplicial complex with the fundamental domain being a simplex of dimension $|S|-1$. It is identified with the fundamental chamber described above. Each coset $wW_I$ is a simplex of codimension $|I|$ (or dimension $|S|-|I|-1$).  For each $w\in W$ we attach a copy of the fundamental chamber and glue them by face relations. We may also draw a Cayley graph $\Cay(W,S)$ on $\Sigma$ as follows. A vertex $\{v\}$ is the barycenter $x_D$ of the chamber $D=vC_0$. An $s$-edge is drawn between $x_C$ and $x_D$ for $C,D\in \Ch(\Sigma)$ that are $s$-adjacent. 

Every wall divides $\Sigma$ into two \emph{half-spaces}: the graph $\Cay(W,S)\setminus m$ has two connected components whose vertex sets are the barycenters of chambers in these half-spaces. Walls can be visualized as 1-codimensional hyperplanes in the simplicial complex. Every panel is contained in a unique wall $m$: The $s$-adjacent chambers $wC_0$ and $wsC_0$ contain a panel $m_s$, which is contained in a wall $m$ corresponding to reflection $r_m=wsw\i$. We say the wall $m$ \emph{separates} chambers $C,D\in \Ch(\Sigma)$ if they are in different half-spaces associated with $m$. We also say the $|S|$ walls $\{m\ |\ r_m=wsw\i,\ s\in S\}$ are walls of $wC_0$. The chamber distance $d_{\Ch}(C,D)$ equals the number of walls separating $C$ and $D$. 

We may read off cosets $wW_J$ from $\Sigma$ by the notion of a $J$-residue. For a chamber $D=wC_0$, the set of chambers $R_J(D)$ connected to $D$ by a $J$-gallery is called a \emph{$J$-residue} of $D$. We see that $R_J(wC_0)$ corresponds to elements in $wW_J$. Its stabilizer in $W$ is $\Stab_W(R_J(wC_0))=wW_Jw\i$. A wall $m$ separates two chambers of a residue $R$ if and only if $r_m\in \Stab_W(R)$, in which case we call it a \emph{wall of $R$}.

\subsubsection{Davis complex}
The Davis complex $X=X(W,S)$ is a metric realization of Coxeter complex $\Sigma(W,S)$. It is a complete CAT($0$) cellular complex with piecewise Euclidean metric $d$, with a natural action of $W$ by cellular isometries. While the fundamental domain of a Coxeter complex is a simplicial complex of dimension $|S|-1$, that of a Davis complex is usually not. We now describe a cellular construction of the Davis complex as follows. 

Let $I\subset S$ be a \emph{spherical} subset, meaning the parabolic group $W_I$ is finite. The $|I|$-dimensional cell $P_I$ is called a Coxeter cell, defined as follows. $W_I$ acts on an $I$-dimensional Euclidean space $V_I$ with simple reflections $s\in I$ acting by linear reflections. The wall $m_s$ is a hyperplane fixed by $s$. The collection of walls $(m_s)_{s\in I}$ delimit a simplicial cone $C_I$ in $V_I$. Let $x_I$ be the point in the interior of $C_I$ with distance $1$ to each wall $m_s$. Then $P_I$ is the convex hull of points $W_Ix_I$, with induced Euclidean metric from $V_I$. Note that if $I\subset J$ are both spherical, then $P_I$ is a face of $P_J$.

The $1$-skeleton $X^{(1)}$ is the Cayley graph of $(W,S)$. For each $I\subset S$ spherical, we glue $I$-dimensional cells $P_I$ to the Cayley subgraph of $(W_I,I)$ inductively from low to top dimensions according to face relations above. The metric $d$ is obtained by gluing each Euclidean metric on $P_I$, for $I\subset S$ spherical. Since $W$ acts on the Cayley graph preserving spherical residues, it induces an action on $X$ by cellular isometries.

The chamber system of Coxeter complexes can also be realized in Davis complexes. We realize chambers of $\Sigma$ inside $X$ as a closed convex subset. For $C=vC_0$, and let $x_C:=\{v\}$ be a vertex in $X^{(1)}$. For each spherical residue $wR_I$, $I\subset S\text{ spherical}$ containing $C$ corresponds to a polytope $P_I$ in $X$ with $x_C=x_I$. Define $C:=\bigcup_{I\subset S\text{ spherical}} P_I\cap C_I$. We refer to $x_C$ as the \emph{barycenter} of $C$. We identify chambers and walls of $\Sigma$ with their realization of $X$. We also set $\Ch(X)=\Ch(\Sigma)$ to be the collection of chambers in $X$.

\begin{example}
The Davis complex of the Coxeter group
\begin{tikzpicture}[scale=0.6,baseline={(current bounding box.center)}]
  \node[circle, draw, inner sep=1pt, label=above:$s$] (s) at (0,1) {};
  \node[circle, draw, inner sep=1pt, label=left:$t$] (t) at (-1,0) {};
  \node[circle, draw, inner sep=1pt, label=right:$u$] (u) at (1,0) {};
  \draw (s) -- (t) node[midway, above left=0.5pt and 1pt] {\tiny 3};
  \draw (s) -- (u) node[midway, above right=0.5pt and 1pt] {\tiny $\infty$};
  \draw (t) -- (u) node[midway, below=2pt] {\tiny $\infty$};
\end{tikzpicture}
has the fundamental chamber
\begin{tikzpicture}[baseline={(current bounding box.center)}]
  \coordinate (top) at (0,0.4);
  \coordinate (right) at (0.3,0);
  \coordinate (bottom) at (0,-0.3);
  \coordinate (left) at (-0.3,0);
  \coordinate (c0) at (0,-0.3);
  \filldraw[fill=black!50, thick] (top) -- (right) -- (bottom) -- (left) -- cycle;
  \fill (c0) circle (1pt);
  \node[right] at (c0) {\tiny $x_{C_0}$};
  \draw[thick] (c0) -- ++(0,-0.2);
\end{tikzpicture},
which is shaded in the picture below.
It is given by $C=\bigcup_{I\subset S\text{ spherical}} P_I\cap C_I$. The hexagon in the picture is $P_{\{s,t\}}$. The three lines are walls of three generators. The Davis complex is shown below.

\medskip

\noindent
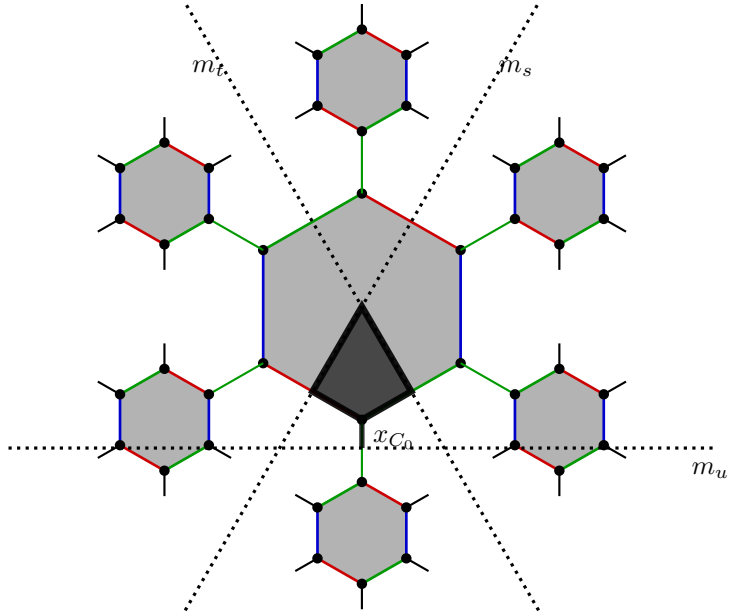
\begin{figure}[h]
    \centering
    \begin{tikzpicture}[scale=1.5]
        \drawHexFractal{0}{0}{1}{1}
        \draw[dotted,very thick] ({-3.1*cos(60)}, {-3.1*sin(60)}) -- ({3.1*cos(60)}, {3.1*sin(60)});
        \draw[dotted,very thick] ({-3.1*cos(120)}, {-3.1*sin(120)}) -- ({3.1*cos(120)}, {3.1*sin(120)});
        \draw[dotted,very thick] (-3.1,-1.25) -- (3.1,-1.25);
        \node at ({2.3*cos(60)}, {2.3*sin(60)}) [above right=-2pt] {$m_s$};
        \node at ({2.3*cos(120)}, {2.3*sin(120)}) [above left=-2pt] {$m_t$};
        \node at (0, -1.25) [below right=2pt and 120pt] {$m_u$};
        \node at (0, -1) [below right=1pt] {$x_{C_0}$};
        \fill[black, opacity=0.6]
            (0, 0)
            -- (0.433, -0.75)
            -- (0, -1)
            -- (-0.433, -0.75)
            -- cycle;
        \draw[line width=2.5pt, black, opacity=0.9]
            (0, 0)
            -- (0.433, -0.75)
            -- (0, -1)
            -- (-0.433, -0.75)
            -- cycle;
        \draw[black, line width=2pt, opacity=0.8]
            (0, -1) -- (0, -1.25);

    \end{tikzpicture}
    \caption{Davis complex of the Coxeter group ($3,\infty,\infty$).}
\end{figure}

\end{example}

\subsection{Class Polynomials}
Class polynomials arise from studying the cocenter $\ch/[\ch,\ch]$. The theory of conjugacy classes of finite Coxeter groups can be found in \cite[\S 8]{geck2000characters}. We will summarize some important results for a finite Coxeter group $(W,S)$ here.

\begin{lem}\label{lem:sameimg}
    If $w\sim w'$, then $T_w\equiv T_{w'} \mod [\ch,\ch]$.
\end{lem}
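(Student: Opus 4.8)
The plan is to exploit that $\sim$ is by definition the transitive closure of \emph{elementary} strong conjugation, together with the fact that the relation ``$h \equiv h' \bmod [\ch,\ch]$'' is itself transitive and symmetric. Hence it suffices to treat a single elementary strong conjugation: suppose $\ell(w) = \ell(w')$ and there is $x \in W$ with $w' = xwx^{-1}$ and, say, $\ell(xw) = \ell(x) + \ell(w)$ (the other case $\ell(wx^{-1}) = \ell(x) + \ell(w)$ being symmetric). Everything then reduces to producing one clean identity in $\ch$ and pushing it into the cocenter.

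First I would record the length bookkeeping. From $w' = xwx^{-1}$ we get $xw = w'x$, and since $\ell(w) = \ell(w')$ the hypothesis $\ell(xw) = \ell(x) + \ell(w)$ reads equivalently $\ell(w'x) = \ell(w') + \ell(x)$. Both products are therefore reduced, so the defining multiplication rule $T_uT_v = T_{uv}$ (valid precisely when $\ell(uv) = \ell(u)+\ell(v)$) gives the key identity
$T_x T_w = T_{xw} = T_{w'x} = T_{w'} T_x$. This is the heart of the matter.

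Next I would invoke invertibility of $T_x$. Since every $b_s$ is a unit, the quadratic relation yields $T_s^{-1} = b_s^{-1}(T_s - a_s)$, so each $T_s$ is invertible; writing a reduced expression $x = s_1\cdots s_k$ gives $T_x = T_{s_1}\cdots T_{s_k}$, a product of units and hence a unit. Conjugating the identity above produces $T_{w'} = T_x T_w T_x^{-1}$. Finally, in the cocenter one has $ab \equiv ba \bmod [\ch,\ch]$ for all $a,b \in \ch$, so
$T_{w'} = T_x\,(T_w T_x^{-1}) \equiv (T_w T_x^{-1})\,T_x = T_w \bmod [\ch,\ch]$, as desired. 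The case $\ell(wx^{-1}) = \ell(x) + \ell(w)$ runs identically after replacing $x$ by $x^{-1}$ and using $wx^{-1} = x^{-1}w'$.

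I do not expect a genuine obstacle: once the reduced factorizations behind $xw = w'x$ are in place, the statement is essentially formal. The single point demanding care is the invertibility of $T_x$, which is exactly where the standing hypothesis that all $b_s$ are invertible is used—without it the conjugation step would break down.
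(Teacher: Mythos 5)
Your proof is correct and follows essentially the same route as the paper: reduce to a single elementary strong conjugation, use the reduced-product identity $T_xT_w = T_{w'}T_x$ together with the invertibility of $T_x$ (coming from the invertibility of the $b_s$), and conclude by a single commutator in $[\ch,\ch]$. If anything, your final step $T_{w'} = T_x(T_wT_x^{-1}) \equiv (T_wT_x^{-1})T_x = T_w$ displays the commutator more explicitly than the paper's expression $(T_wT_x-T_xT_w)T_x^{-1}$, which requires a small re-bracketing to lie in $[\ch,\ch]$ since that is only an $R$-submodule, not an ideal.
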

\begin{proof}
    Write $w'=xwx\i$ with $\ell(w)=\ell(w')$ and $x\in W_J$. We have either $\ell(xw)=\ell(w)+\ell(x)$ or $\ell(wx\i)=\ell(w)+\ell(x)$. As we assume $\{b_s\}_{s\in S}$ are invertible, $T_x$ is invertible. In the first case, $T_{w'}=T_x T_w T_{x}\i$. Hence $T_w-T_{w'}= T_w-T_x T_w T_{x}\i=(T_wT_x-T_xT_w)T_x\i\in [\ch,\ch]$. The second case is similar.
\end{proof}

        Let $\cl(W)$ be the set of all conjugacy classes in $W$. For each finite conjugacy class $\co$, we choose once and for all $w_{\co}\in \co^{\min}$ a representative of minimal length.

        By \cref{thm:minimal_thm_finite}, every element can be cyclically reduced to minimal length. 
        When \( w \xrightarrow{s} w' \), we have
        \[
        T_w \equiv 
        \begin{cases}
        T_{w'} & \text{if } \ell(w) = \ell(w'), \\
        b_s T_{w'} + a_s T_{sw} & \text{if } \ell(w) > \ell(w')
        \end{cases}
        \mod [\mathcal{H}, \mathcal{H}].
        \]
        
        Indeed for the second case, write \( w' = sws \), we compute
        \[
        T_s T_w = T_s(b_s T_{sws} + a_s T_{sw}) = b_s T_{ws} + a_s T_w = T_w T_s.
        \]
        Hence, by \cref{thm:minimal_thm_finite}, we may inductively, express $T_w$ into linear combination of $T_{w_\co}$. The coefficients $f_{w,\co}\in R$ are called the \emph{class polynomials}. The class polynomials $f_{w,\co}$ satisfy the following properties. If $w$ is a minimal length element, then $f_{w,\co}=\delta_{\{w\in \co\}}$. For smaller length $w$, by \cref{thm:minimal_thm_finite}, $f_{w,\co}$ can be calculated inductively:
        \begin{equation} \label{eq:fmin-recursion}
            f_{w',\co}
              = \begin{cases}
                  f_{w,\co}, & \text{if } w \approx w', \\[6pt]
                  b_s\,f_{w,\co} \;+\; a_s\,f_{s w,\co}, & \text{if } w' \xrightarrow{s} w\text{ and } \ell(w')>\ell(w), s\in S.
                \end{cases}
        \end{equation}
        
        \begin{prop}\cite[Theorem 8.2.3]{geck2000characters}
            For each $w\in W$, we have
            \begin{equation}
                T_w\equiv \sum_{\co\in \cl(W)} f_{w,\co} T_{w_{\co}} \mod [\ch,\ch].
            \end{equation}
            Moreover, the class polynomials $f_{w,\co}$ are uniquely determined by this equation.
        \end{prop}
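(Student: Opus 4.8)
The statement has two parts — the \emph{existence} of the congruence and the \emph{uniqueness} of the coefficients $f_{w,\co}$ — and I would treat them in turn. Existence is a clean downward induction on length built on \cref{thm:minimal_thm_finite}, while uniqueness amounts to the $R$-linear independence of the classes $\{T_{w_\co}\}_{\co\in\cl(W)}$ in the cocenter $\ch/[\ch,\ch]$; I expect uniqueness to be the main obstacle.

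\emph{Existence.} I argue by induction on $\ell(w)$. If $w$ has minimal length in its conjugacy class $\co$, then $f_{w,\co'}=\delta_{\{w\in\co'\}}$, so the right-hand side is $T_{w_\co}$; by \cref{thm:minimal_thm_finite}(2) we have $w\sim w_\co$, and \cref{lem:sameimg} gives $T_w\equiv T_{w_\co}\mod[\ch,\ch]$, as required. If $w$ is not of minimal length, \cref{thm:minimal_thm_finite}(1) provides a chain of cyclic shifts decreasing $\ell(w)$; its length-preserving initial part consists of $\approx$-moves, along which both $T_w$ (by \cref{lem:sameimg}) and $f_{w,\co}$ (by the first case of \eqref{eq:fmin-recursion}) are unchanged, so I may assume there is $s\in S$ with $w\xrightarrow{s}sws$ and $\ell(sws)<\ell(w)$. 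The reduction formula displayed before \eqref{eq:fmin-recursion} then yields $T_w\equiv b_sT_{sws}+a_sT_{sw}\mod[\ch,\ch]$, where $sws$ and $sw$ are strictly shorter. Applying the induction hypothesis to each and collecting the coefficient of $T_{w_\co}$ gives exactly $b_sf_{sws,\co}+a_sf_{sw,\co}=f_{w,\co}$ by the second case of \eqref{eq:fmin-recursion}, which proves the congruence.

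\emph{Uniqueness.} Here it suffices to show the $\#\cl(W)$ classes $\{T_{w_\co}\}$, which span the cocenter by existence, are linearly independent there. My approach uses that $\ch$ is a symmetric algebra: the trace $\tau(\sum_w c_wT_w)=c_e$ is a symmetrizing form, non-degenerate because each $b_s$ is invertible (the dual basis of $\{T_w\}$ is $\{b_w^{-1}T_{w^{-1}}\}$, where $b_w$ is the product of the parameters $b_s$ along a reduced word for $w$). Over a splitting field $K$ for generic parameters, $\ch_K$ is split semisimple with $\#\Irr(\ch_K)=\#\Irr(W)=\#\cl(W)$ by Tits' deformation theorem, and for a symmetric algebra over a field one has $[\ch_K,\ch_K]^{\perp}=Z(\ch_K)$; hence $\dim_K\ch_K/[\ch_K,\ch_K]=\dim_KZ(\ch_K)=\#\cl(W)$, so the spanning set $\{T_{w_\co}\}$ is a basis and uniqueness holds over $K$. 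To descend to arbitrary $R$, I note that the recursion \eqref{eq:fmin-recursion} shows $f_{w,\co}\in\mathbb{Z}[a_s,b_s]$, so it is enough to work over the generic domain $A_0=\mathbb{Z}[a_s,b_s^{\pm}]$: existence gives a surjection $A_0^{\cl(W)}\twoheadrightarrow\ch_{A_0}/[\ch_{A_0},\ch_{A_0}]$, whose kernel is torsion by linear independence over $\mathrm{Frac}(A_0)$ and hence zero since $A_0$ is a domain; as the cocenter commutes with the base change $A_0\to R$, the classes remain a basis over $R$. The crux — and the main obstacle — is precisely this uniqueness: over a general ring semisimplicity is unavailable, so one cannot count dimensions directly and must instead transport linear independence from the generic split semisimple situation, which is where the representation-theoretic input (invertibility of the character table, via Tits deformation) genuinely enters.
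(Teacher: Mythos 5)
Your proposal is correct and takes essentially the same route as the paper's source: the paper gives no proof of this proposition (it is quoted from \cite[Theorem 8.2.3]{geck2000characters}), and your existence induction is exactly the reduction the paper sketches immediately before the statement (the displayed congruence for $w \xrightarrow{s} w'$ combined with the recursion \eqref{eq:fmin-recursion}, together with \cref{thm:minimal_thm_finite} and \cref{lem:sameimg}). Your uniqueness argument --- reducing to linear independence of the images of $\{T_{w_\co}\}$ in $\ch/[\ch,\ch]$ and proving it via the symmetrizing form, split semisimplicity by Tits deformation, and base change from the generic parameter ring --- is precisely the standard Geck--Pfeiffer argument, and it matches the paper's remark after the proposition that uniqueness is equivalent to this linear independence.
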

    \begin{rmk}
        By this proposition, the uniqueness of the class polynomial is equivalent to the linear independence of the basis elements (the image of) $\{T_{w_\co}\}_{\co}$ in the cocenter $\ch/[\ch,\ch]$.
    \end{rmk}

\section{Classification of finite partial conjugacy classes}
    Let $w\in W, J\subset S$. In this section, we classify when the $W_J$-conjugacy classes in $W$ are finite. 
    
    
     Denote by $\bar{w}\in {}^JW^J$ the unique minimal length element in $W_JwW_J$. 
     
     Denote $w\cdot J:=w J w \i$.
     
     Following \cite[Corollary 2.3,2.6]{he2007minimal}, setting $K_v=\max\{K \subset J\mid  v\cdot K=K\}$ and define the \textit{partial conjugation} of $W_J$ acting on $W$ by $x\cdot y=xyx^{-1}$. For an automorphism $\sigma: W\to W$, we may define the \textit{twisted conjugation} by $x\cdot_\sigma y=xy\sigma(x)\i$.

    Moreover, we have the following correspondence on the $\Ad(v)$-twisted conjugacy classes $\mathcal{C}$ in $W_{K_v}$ and the $W_J$-conjugacy classes in $W_J\cdot (vW_{K_v})$.

    \begin{prop} \label{thm:partialconjdecom}
    Let $J\subset S$.
    \begin{enumerate}
        \item We have $$W=\sqcup_{v \in  {}^JW} W_J \cdot (v W_{K_v}).$$

        \item For each $v\in {}^JW$, write $K=K_v$ and consider an automorphism
        $\sigma: W_K\to W_K$, $\sigma(x)=vxv\i$. For an $\sigma$-twisted conjugacy class $\mathcal{C}$ in $W_K$, we have $$W_J\cdot v\mathcal{C}=W_J \times^{W_K} v\mathcal{C}$$ 
    where the right-hand side is the quotient space given by $(w_J,vx)\sim (w_Jw_K\i,w_Kvxw_K\i)$, for $w_J\in W_J$, $x\in \mathcal{C},w_K\in W_K$. 
    \end{enumerate}
    \end{prop}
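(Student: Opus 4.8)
The plan is to treat the two assertions separately, following the minimal-coset-representative combinatorics of \cite{he2007minimal}. Throughout I would use the length-additivity $\ell(gv)=\ell(g)+\ell(v)$ for $g\in W_J$, $v\in {}^JW$ and its right-handed analogue for $W^J$, together with the fact that $K:=K_v$ satisfies $vW_Kv^{-1}=W_K$, so that conjugation by $v$ restricts to the automorphism $\sigma$ of $W_K$ and $vW_K=W_Kv$.

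For assertion (1), the first step is a normal-form observation: if $w'=vx$ with $v\in {}^JW$ and $x\in W_K$, then $w'=(vxv^{-1})v\in W_K v\subseteq W_J v$, so that $v=\min(W_J w')$ is recovered intrinsically from $w'$. To prove covering, I would take a representative $w'$ of minimal length in $\co_J(w)$, set $v=\min(W_J w')$ and write $w'=zv$ with $z\in W_J$ and $\ell(w')=\ell(z)+\ell(v)$; the claim is $z\in W_K$, whence $w'\in vW_K$. If $z\notin W_K$, I would take a left descent $s\in J$ of $z$ and conjugate, using $\ell(szv)=\ell(w')-1$, to show that unless $z\in W_K$ one can either strictly reduce the length inside $\co_J(w)$ (contradicting minimality) or produce a strictly larger $v$-stable subset of $J$, contradicting the maximality of $K_v$. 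For disjointness, suppose $vx$ and $v'x'$ (both in normal form) are conjugate by $g\in W_J$; passing to left cosets gives $W_Jv'=W_J(vg^{-1})$, and I would upgrade this to $v=v'$ by showing that the conjugating element must normalize the coset $vW_K$, which is exactly the transversality statement below.

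For assertion (2), consider the natural map $\Phi\colon W_J\times^{W_K} v\mathcal{C}\to W$, $[w_J,vx]\mapsto w_J(vx)w_J^{-1}$. It is well defined because the two representatives $(w_J,vx)$ and $(w_Jw_K^{-1},w_Kvxw_K^{-1})$ of a class have equal image, and its image is precisely $W_J\cdot v\mathcal{C}$, giving surjectivity for free. The whole content is injectivity. Given $w_J(vx)w_J^{-1}=w'_J(vx'){w'_J}^{-1}$, set $g={w'_J}^{-1}w_J\in W_J$, so that $g(vx)g^{-1}=vx'$ with $vx,vx'\in vW_K$. Using the conjugation formula $w_K(vx)w_K^{-1}=v\bigl(\sigma^{-1}(w_K)\cdot_\sigma x\bigr)$, everything reduces to the single transversality claim: if $g\in W_J$ and $g\,vW_K\,g^{-1}\cap vW_K\neq\emptyset$, then $g\in W_K$. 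Granting it, $g\in W_K$ lets me take $w_K=g$ in the balanced-product relation and verify $x'=\sigma^{-1}(g)x g^{-1}=g\cdot_\sigma x$, so that $[w_J,vx]=[w'_J,vx']$.

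The main obstacle is this transversality claim, which also underlies both halves of (1). Using $vW_K=W_Kv$, the hypothesis rewrites as $v^{-1}gv\in W_K g W_K$; replacing $g$ by the minimal representative $d\in {}^K W_J^{K}$ of its double coset and using $v^{-1}W_Kv=W_K$, everything reduces to showing that $v^{-1}dv\in W_K d W_K$ forces $d=e$. This is the technical heart: I expect to prove it by a length argument built on the additivity $\ell(dv)=\ell(d)+\ell(v)$ together with the maximality of $K_v$, the point being that a nontrivial such $d$ would allow one to enlarge $K$ to a strictly larger $v$-stable subset of $J$, contradicting $K=K_v$. The subtlety I anticipate is the left/right asymmetry of ${}^JW$, where length-additivity is available on only one side; I would manage this by working with $v$ and $v^{-1}$ symmetrically and passing between ${}^JW$ and $W^J$.
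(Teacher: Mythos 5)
Your reduction of part (2) to a single ``transversality'' statement (the balanced-product map is well defined and surjective, and injectivity is equivalent to: $g\in W_J$ with $g(vW_K)g^{-1}\cap vW_K\neq\emptyset$ forces $g\in W_K$) is correct and mirrors the step the paper's own proof of (2) takes. But both places where your proposal does real work fail. First, the covering argument for part (1) rests on a false claim: it is not true that an arbitrary minimal-length element $w'$ of a $W_J$-orbit, written as $w'=zv$ with $v=\min(W_Jw')$ and $z\in W_J$, satisfies $z\in W_{K_v}$. Take $W$ of type $A_2$ with $S=\{s_1,s_2\}$, $J=\{s_1\}$, and $w'=s_1s_2$. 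Its orbit is $\{s_1s_2,\,s_2s_1\}$, both elements of length $2$, so $w'$ is of minimal length; here $v=\min(W_Jw')=s_2$ and $z=s_1$, while $K_{s_2}=\emptyset$ because $s_2s_1s_2\notin J$. Thus $z\notin W_{K_v}$ and $w'\notin vW_{K_v}$: the orbit meets the decomposition only through the other representative, namely $s_2s_1=v'$ with $v'=s_2s_1\in{}^JW$. The correct statement is existential (some element of the orbit has the normal form), not universal over minimal-length representatives, and your contradiction dichotomy cannot repair this: conjugating $w'=s_1s_2$ by the descent $s_1$ of $z$ neither shortens $w'$ nor produces a larger $v$-stable subset of $J$ --- it replaces the coset representative $v=s_2$ by $vs_1=s_2s_1$. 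Tracking exactly this migration of $v$ is the content of He's induction, which is why the paper proves nothing here and simply cites \cite[Corollary 2.6]{he2007minimal} for part (1).

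Second, the transversality claim --- which you rightly call the technical heart, and on which your injectivity and disjointness steps both lean --- is never proved: you only say you ``expect'' a length argument, and the suggested mechanism (a nontrivial $d\in {}^{K}W_J^{K}$ with $v^{-1}dv\in W_KdW_K$ would yield a $v$-stable subset of $J$ strictly containing $K_v$) is unsubstantiated. Note the statement is delicate: the tempting intermediate claim $W_J\cap vW_Jv^{-1}\subseteq W_{K_v}$ is false in general --- in type $A_3$ with $J=\{s_1,s_2\}$ and $v=s_3s_2s_1\in{}^JW$ one has $vs_2v^{-1}=s_1$, hence $s_1\in W_J\cap vW_Jv^{-1}$, yet $K_v=\emptyset$ --- so any proof must genuinely exploit the double-coset condition $v^{-1}dv\in W_KdW_K$ rather than just $v^{-1}dv\in W_J$. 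Your disjointness argument is also circular as written: transversality, formulated with the same $v$ on both sides, cannot ``upgrade'' $W_Jv'=W_J vg^{-1}$ to $v=v'$; that upgrade is the disjointness assertion itself. In sum, the skeleton of your part (2) agrees with the paper, but the two load-bearing claims (part (1) and transversality) are respectively false as stated and missing, so the proposition is not proved.
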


    \begin{proof}
        \begin{enumerate}
            \item This is \cite[Corollary 2.6]{he2007minimal}.
            \item Note that $\sigma=\Ad(v)$, and $xv=v\sigma\i(x)$ for any $x\in W_K$. 

            On the one hand, write $w_J=w_1w_K, w_1\in W_J^K$, $w_K\in W_K$. Then for $x\in \mathcal{C}$, $w_J\cdot vx=w_1w_Kvxw_K\i w_1\i=w_1\cdot v\sigma\i(w_K)xw_K\i\in w_1\cdot v\mathcal{C}$.

            On the other hand, assume $w_J\cdot vx=vx'$, for $w_J\in W_J,x,x'\in \mathcal{C}$. Then $w_J=vx'x\i v\i\in vW_Kv\i \subset W_K$. Moreover, $w_J\cdot vx=v\sigma\i(w_J)xw_J\i$. Hence, $x'=\sigma\i(w_J)xw_J\i\in \mathcal{C}$ and $w_J\in W_K$.
        \end{enumerate}
    \end{proof}

    \begin{cor} \label{cor:counting}
        In the setting above, $$\sharp W_J\cdot v\mathcal{C}=\sharp W_J /W_K\times \sharp\mathcal{C}$$ 
    \end{cor}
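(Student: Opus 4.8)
The plan is to read off the cardinality directly from the description of $W_J\cdot v\mathcal{C}$ as the balanced product $W_J\times^{W_K} v\mathcal{C}$ established in \cref{thm:partialconjdecom}(ii). Recall that this balanced product is the quotient of $W_J\times v\mathcal{C}$ by the $W_K$-action $w_K\cdot(w_J,vx)=(w_Jw_K\i,\,w_Kvxw_K\i)$. The key observation is that this action is \emph{free}: its effect on the first coordinate is right multiplication by $w_K\i$ on $W_J$, and right multiplication by a non-identity element of a group is fixed-point-free. Hence the stabilizer of every $(w_J,vx)$ is trivial, and each $W_K$-orbit meets any given ``slice'' transversally.

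Next I would count orbits by choosing, once and for all, a transversal $\{w_i\}$ for the left cosets $W_J/W_K$ (so each $w_J$ lies in a unique coset $w_iW_K$). Given any $(w_J,vx)$, set $w_K=w_i\i w_J\in W_K$, where $w_i$ represents $w_JW_K$; the relation then moves $(w_J,vx)$ to $(w_Jw_K\i,\,\cdot)=(w_i,\,\cdot)$, so every orbit contains a representative whose first coordinate is one of the $w_i$. Freeness forces this representative to be unique: if $(w_i,vx)$ and $(w_i,vx')$ lie in the same orbit, the connecting element $w_K$ must fix the first coordinate, so $w_i w_K\i = w_i$ gives $w_K=e$, whence $vx=vx'$.

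This yields a bijection between $W_J\cdot v\mathcal{C}=W_J\times^{W_K} v\mathcal{C}$ and $\{w_i\}\times v\mathcal{C}$, the latter of cardinality $\sharp(W_J/W_K)\cdot\sharp(v\mathcal{C})=\sharp(W_J/W_K)\cdot\sharp\mathcal{C}$, which is the claim. Because the argument proceeds through a transversal and a free action rather than through division of cardinals, it remains valid whether or not $W_J$, $W_K$, or $\mathcal{C}$ is finite. I expect essentially no obstacle here: the substance is already contained in \cref{thm:partialconjdecom}, and the only point demanding a moment of care is the freeness of the $W_K$-action on the first factor, which is exactly what delivers existence and uniqueness of the transversal representatives at once.
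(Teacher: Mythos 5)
Your proof is correct and takes the route the paper intends: the corollary is stated there without proof as an immediate consequence of \cref{thm:partialconjdecom}(ii), and your argument---freeness of the $W_K$-action via right multiplication on the first factor, plus a transversal for $W_J/W_K$---is exactly the counting that derivation requires, with the added merit of working regardless of whether $W_J$, $W_K$, or $\mathcal{C}$ is infinite.
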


    We quote two results to be used in our proof.
    \begin{lem}\cite[Lemma 3.3]{marquis2024centre}\label{lem:MarquisComb}
        Let $w\in W$. If there exists a non-spherical subset $J\subset S$ so that $H:=J\cap \bar{w}\cdot J$ is spherical, then $U^+_J(w)$ is infinite.
    \end{lem}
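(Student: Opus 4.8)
My plan is to split the statement into a combinatorial \emph{unboundedness} step and a geometric \emph{propagation} step, working throughout inside the base $J$-residue of the Davis complex.

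\textbf{Step 1: the conjugacy class $\co_J(w)$ is infinite.} The element $w$ lies in a unique piece $W_J\cdot(vW_{K_v})$ of the decomposition in \cref{thm:partialconjdecom}, with $v\in{}^JW$. Since some conjugate of $w$ lies in $vW_{K_v}\subseteq W_JvW_J$ and the double coset is invariant under $W_J$-conjugation, we have $W_JvW_J=W_J\bar wW_J$, so I may write $v=a\,\bar w\,c$ with $a,c\in W_J$. Using $a,c\in W_J$ together with the standard identity $W_J\cap \bar wW_J\bar w\i=W_H$ (Kilmoyer's theorem, valid because $\bar w\in{}^JW^J$ is the minimal double-coset representative),
\[
W_J\cap vW_Jv\i = W_J\cap a\,\bar wW_J\bar w\i\,a\i = a\,(W_J\cap \bar wW_J\bar w\i)\,a\i = aW_Ha\i,
\]
which is finite since $H$ is spherical. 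As $vK_vv\i=K_v\subseteq J$, every $s\in K_v$ satisfies $s\in W_J$ and $v\i sv\in K_v\subseteq W_J$, whence $W_{K_v}\subseteq W_J\cap vW_Jv\i=aW_Ha\i$ is finite, i.e. $K_v$ is spherical. Because $J$ is non-spherical, $W_J$ is infinite, so the finite subgroup $W_{K_v}$ has infinite index; \cref{cor:counting} then gives $\sharp\co_J(w)=\sharp(W_J/W_{K_v})\cdot\sharp\mathcal{C}=\infty$.

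\textbf{Step 2: geometric reformulation of $U^+_J(w)$.} Let $R=R_J(C_0)$ be the base $J$-residue, realised as a convex subcomplex of $X$ that is CAT($0$). Recall $\pi_w(gC_0)=g\i wg$ and $\ell(\pi_w(D))=d_{\Ch}(D,wD)$. For $u=\pi_w(D)$ with $D=gC_0\in R$ and $s\in J$, the $s$-adjacent chamber $Ds=gsC_0$ again lies in $R$ and $\pi_w(Ds)=s\,u\,s$; thus a cyclic shift by $s\in J$ is exactly a move to a $J$-adjacent chamber of $R$, and the length after the shift is the displacement $d_{\Ch}(Ds,w\,Ds)$. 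Consequently $U^+_J(w)$ is the set of $\pi_w(D)$ for chambers $D$ reached from $C_0$ along a $J$-gallery on which $d_{\Ch}(\cdot,w\cdot)$ is non-decreasing, and it suffices to produce one such infinite gallery with unbounded displacement.

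\textbf{Step 3: no local maxima, hence an ascending chain.} The metric displacement function $x\mapsto d(x,wx)$ is convex on the CAT($0$) space $X$, hence convex on the geodesically convex subcomplex $R$; its values at barycenters are comparable to $d_{\Ch}(D,wD)=\ell(\pi_w(D))$, so by Step~1 it is unbounded on $R$, in particular non-constant. A non-constant convex function on $R$ can have no local maximum: a local maximum at a point would, by convexity along the geodesics joining that point to every other point of $R$, force the function to be constant. Translating back, no chamber of $R$ maximises $d_{\Ch}(\cdot,w\cdot)$ over its $J$-neighbours; equivalently, for every $u\in\co_J(w)$ there is $s\in J$ with $\ell(sus)>\ell(u)$. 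Starting from $u_0=w$ and repeatedly choosing such an $s$, I obtain a chain $u_{i+1}=s_{i+1}u_is_{i+1}$ with strictly increasing lengths, so $u_{i+1}\xrightarrow{s_{i+1}}u_i\xrightarrow{}\cdots\xrightarrow{}w$ shows $u_i\in U^+_J(w)$, and the $u_i$ are pairwise distinct; hence $U^+_J(w)$ is infinite.

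\textbf{Main obstacle.} The crux is the local-maximum step: I must upgrade a \emph{combinatorial} local maximum (one taken only over the $J$-adjacent chambers $Ds$) to a genuine local maximum of the continuous convex displacement function, so that CAT($0$) convexity forces constancy. This requires relating the intrinsic geodesics of $R$ to $J$-galleries and showing that the displacements at the barycenters of the $J$-neighbours control the function on a full metric neighbourhood — precisely the convexity analysis on the Davis complex highlighted in the introduction. A secondary, purely bookkeeping point is the double-coset reduction $W_{K_v}\subseteq aW_Ha\i$ in Step~1, which depends on choosing the representatives $a,\bar w,c$ compatibly.
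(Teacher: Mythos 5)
Your Step 1 is correct and cleanly argued: combining \cref{thm:partialconjdecom}, Kilmoyer's theorem, and \cref{cor:counting} does show that $\co_J(w)$ is infinite whenever $J$ is non-spherical and $H$ is spherical. But the lemma is a statement about $U^+_J(w)$, which is a priori much smaller than $\co_J(w)$, and Steps 2--3, which are supposed to transfer infinitude from the conjugacy class to the cyclic-shift cone, contain genuine gaps. First, the convexity principle you invoke is simply false: a non-constant convex function on a CAT($0$) space can have local maxima --- for instance $x\mapsto\max\bigl(0,\,d(x,x_0)-1\bigr)$ is convex, non-constant, unbounded, and constant near $x_0$. Convexity forces at most a plateau, never global constancy. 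This is precisely why the paper's \cref{prop:localmaximum} is delicate: it works with the plateau $C_{\approx}$, picks a barycenter maximizing the true displacement $d_w$ on it, and then needs the geodesic-extension property of \cref{lem:extgeod} to reach a contradiction; that extension property is available only when \emph{all proper subsets of $J$ are spherical}, a hypothesis the lemma you are proving does not grant you. (Your own ``main obstacle'' paragraph points at this combinatorial-to-metric upgrade, but pointing at it is not doing it; the paper also carefully distinguishes $d_J$ from $\restr{d}{X_J}$, another issue your ``convex subcomplex'' shortcut glosses over.) Second, and more structurally, the equivalence ``$\co_J(w)$ infinite $\iff U^+_J(w)$ infinite'' is \cref{thm:infU}, and the paper's proof of that theorem handles the case where $J$ has a non-spherical proper subset by invoking \cref{lem:MarquisComb} itself (through the argument of \cref{thm:classification}). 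So the plan ``prove $\co_J(w)$ infinite, then replay the geometric transfer'' is circular exactly in the case that your geometric machinery cannot reach.

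For comparison: the paper does not prove this lemma at all; it quotes it from \cite[Lemma 3.3]{marquis2024centre} and remarks that the proof there is purely combinatorial, i.e., it produces explicit infinite families inside $U^+_J(w)$ directly from the sphericity of $H$, with no appeal to $\co_J(w)$ or to Davis-complex geometry. Your approach could at best be salvaged in the special case where every proper subset of $J$ is spherical (where Step 1 plus \cref{prop:localmaximum} and the maximal-element argument of \cref{thm:infU} give a non-circular proof), but in general the missing combinatorial construction --- or an independent, non-circular proof of the transfer step for arbitrary non-spherical $J$ --- is exactly the content of the lemma, and it is not supplied here.
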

    The proof is purely combinatorial.
    \begin{lem}\cite[Lemma 2.1]{capraceMarquis2013open}\label{lem:normalizer}
        Let $J\subset S$ be a subset so that all irreducible components are non-spherical. Then the normalizer $N_W(W_J)=W_J\times W_{J^\perp}=W_{J\sqcup J^\perp}$.
    \end{lem}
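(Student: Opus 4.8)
The plan is to prove the two inclusions separately, with essentially all the work in the reverse one.

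\emph{Easy inclusion and reduction.} First I observe $W_{J\sqcup J^\perp}=W_J\times W_{J^\perp}$: the subsets $J,J^\perp$ are disjoint, so $W_J\cap W_{J^\perp}=W_{J\cap J^\perp}=\{e\}$, and by the very definition of $J^\perp$ every generator of $J^\perp$ commutes with every generator of $J$; hence the two parabolics commute and generate their internal direct product. Both factors normalize $W_J$ (the first because it is $W_J$ itself, the second because it centralizes $W_J$), giving $W_J\times W_{J^\perp}\subseteq N_W(W_J)$. For the reverse inclusion, take $w\in N_W(W_J)$ and write $w=a\,u$ with $a\in W_J$ and $u=\min(W_J w)\in{}^JW$, a length-additive factorization. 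Since $a\i\in W_J\subseteq N_W(W_J)$ we get $u=a\i w\in N_W(W_J)$, so it suffices to prove: if $u\in{}^JW$ and $uW_Ju\i=W_J$, then $u\in W_{J^\perp}$.

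\emph{Root normalization and the induced diagram automorphism.} I would work in the root system $\Phi$ with positive system $\Phi^+$ and parabolic subsystem $\Phi_J=\Phi\cap\Span\{\alpha_s:s\in J\}$. Since $u r_\beta u\i=r_{u\beta}$, the condition $uW_Ju\i=W_J$ is equivalent to $u\Phi_J=\Phi_J$. Because $u\in{}^JW$ we have $u\i\alpha_s\in\Phi^+$ for $s\in J$, and as $u\i\alpha_s\in u\i\Phi_J=\Phi_J$ this forces $u\i\alpha_s\in\Phi_J^+$; extending by additivity, $u\i$ preserves $\Phi_J^+$ and therefore permutes the simple roots $\{\alpha_s\}_{s\in J}$. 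Thus there is a diagram automorphism $\rho$ of $J$ with $u\alpha_t=\alpha_{\rho(t)}$ and $u\,t\,u\i=\rho(t)\in J$; in particular $u\cdot J=J$ as a set of simple reflections and $\ell(ut)=\ell(u)+1$ for all $t\in J$, so in fact $u\in{}^JW^J$. The claim is now reduced to showing that the non-spherical hypothesis forces $\rho=\id$ and $\supp(u)\subseteq J^\perp$.

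\emph{The crux.} This reduction is routine; the genuine obstacle is the last step, because diagram automorphisms of infinite irreducible systems certainly exist abstractly (for instance the rotations of an affine diagram), so the real content is that \emph{none} of them is realized by inner conjugation inside $W$ itself once every component of $J$ is non-spherical. I would attack this by induction on $\ell(u)$. If $u\ne e$, let $c$ be the last letter of a reduced word; since $u\in W^J$ we have $c\notin J$, and $u=u'c$ with $u'\in{}^JW$ (prefix-closure of ${}^JW$). If $c\in J^\perp$ then $ctc=t$, so $u'\,t\,u'\i=utu\i=\rho(t)$, i.e. $u'$ again normalizes $W_J$ with the same $\rho$; the inductive hypothesis gives $u'\in W_{J^\perp}$, whence $u=u'c\in W_{J^\perp}$. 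Everything therefore comes down to excluding a last letter $c$ with $m_{ct}\ge3$ for some $t\in J$; expanding $\alpha_{\rho(t)}=u\alpha_t=u'(c\alpha_t)$ with $c\alpha_t=\alpha_t+\lambda\alpha_c$ ($\lambda>0$) and using that a simple root is never a sum of two positive roots already constrains this configuration sharply.

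\emph{Closing the crux and the main difficulty.} To rule out such a $c$ I expect to need non-sphericity essentially, via the principle that a Coxeter system containing an infinite parabolic is itself infinite: in Howlett's description $N_W(W_J)=W_J\rtimes N_J$, any extra normalizing element — a reflection $r_\beta\in N_W(W_J)\cap{}^JW^J$ or a realized diagram automorphism — is built from a \emph{spherical} parabolic overgroup $W_{J\cup K}$ of a component of $J$, which cannot exist once that component is non-spherical, leaving only the commuting generators $J^\perp$. Alternatively I would argue geometrically in the Davis complex: $uW_Ju\i=W_J$ says the $J$-residues $R_J(C_0)$ and $R_J(uC_0)$ are parallel with identical wall set $\{m:r_m\in W_J\}$, and the classification of residues parallel to a non-spherical one shows they differ only by an element of $W_{J^\perp}$, forcing $u\in W_{J^\perp}$; the finiteness dichotomy of \cref{lem:MarquisComb}, which turns a spherical intersection $J\cap\bar{w}\cdot J$ into infiniteness, is exactly the input that powers this route. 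I regard verifying that the non-spherical hypothesis annihilates \emph{both} the reflection part and the diagram-automorphism part of $N_J$ as the heart of the proof; the preceding reduction and root computation are mechanical.
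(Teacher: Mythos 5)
Your first two steps are correct and standard: the easy inclusion $W_J\times W_{J^\perp}\subseteq N_W(W_J)$, the reduction to $u\in{}^JW$ with $uW_Ju^{-1}=W_J$, and the root computation showing that such a $u$ lies in ${}^JW^J$ and permutes $\{\alpha_t\}_{t\in J}$ via a diagram automorphism $\rho$. The genuine gap is exactly at what you call the crux, and nothing you wrote closes it. Your identity $\alpha_{\rho(t)}=u'\alpha_t+\lambda\,u'\alpha_c$ (with $u'\alpha_c>0$, $\lambda>0$) only forces $u'\alpha_t<0$; that is not a contradiction. In fact, no contradiction can follow from the constraints you have derived, because they are all satisfiable when $J$ is spherical, where the lemma is false: in $W$ of type $B_2$ with $S=\{s,t\}$ and $J=\{t\}$, the element $u=sts$ lies in ${}^JW^J$, normalizes (indeed centralizes) $W_J$, its last letter is $s$ with $m_{st}=4\ge 3$, and $u'=st$ satisfies $u'\alpha_t<0$ and $u'\alpha_s>0$; yet $u\notin W_{J^\perp}=\{e\}$. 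So any correct completion must make the non-spherical hypothesis do real work, and your argument never does.

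The two escapes you propose are black boxes whose relevant content is precisely the statement being proved. The claim that in Howlett's decomposition $N_W(W_J)=W_J\rtimes N_J$ every extra normalizing element ``is built from a spherical parabolic overgroup'' is (a version of) the Brink--Howlett/Deodhar analysis that one would have to actually carry out; it is not a citable one-liner, and as you state it, it is a heuristic rather than a theorem. Likewise, the ``classification of residues parallel to a non-spherical residue'' in the Davis complex is established nowhere in this paper, and \cref{lem:MarquisComb} cannot substitute for it: that lemma concerns the infinitude of $U^+_J(w)$ when $J\cap\bar w\cdot J$ is spherical, not parallelism of residues. Note also that the paper itself gives no proof to compare against --- it quotes the statement from \cite[Lemma 2.1]{capraceMarquis2013open}, remarking only that the proof is root-theoretic. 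A workable completion of your approach, close in spirit to that cited proof, would run: from $ut=\rho(t)u$ with lengths adding on both sides, deduce that the inversion set $N(u)=\{\beta\in\Phi^+\mid u\beta\in\Phi^-\}$ is $W_J$-invariant and disjoint from $\Phi_J$, so every $\beta\in N(u)$ has a finite $W_J$-orbit of positive roots; the step where non-sphericity must enter is to show that an infinite irreducible $W_J$ admits no such finite orbit except on roots that are orthogonal to $\Phi_J$ and indeed lie in $\Phi_{J^\perp}$, giving $N(u)\subseteq\Phi_{J^\perp}$ and hence $u\in W_{J^\perp}$. That orbit-finiteness argument is the heart of the lemma, and your proposal leaves it entirely unproven.
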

    The proof is root-theoretic.
    
    \begin{thm}\label{thm:classification} Suppose $(W,S)$ is an infinite Coxeter group and $J$ is irreducible. Then $\co_J(w)$ is finite only in the following cases. 
    \begin{enumerate}
        \item $J$ is spherical, i.e., $W_J$ is finite;
        \item $w\in W_{J^\perp}$;
        \item $J$ is affine, and $w=w_1 w_2$, $w_1\in W_{J^\perp}$, $w_2\in W_J$, and $w_2$ is a translation in $W_J$.
        
    \end{enumerate}
    Here, $J^\perp=\{s\in S\setminus J\ | \  sj=js\text{ for all } \ j\in J \}.$
    \end{thm}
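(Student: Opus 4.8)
The plan is to reduce the classification to two separate finiteness questions via the orbit decomposition in \cref{thm:partialconjdecom}, and then settle each. If $J$ is spherical then $W_J$ is finite and every $W_J$-conjugacy class is automatically finite, giving case (1); so from now on I assume $J$ is non-spherical, hence (being irreducible) $W_J$ is infinite irreducible. By \cref{thm:partialconjdecom}(1) the element $w$ lies in a unique piece $W_J \cdot (vW_{K_v})$ with $v \in {}^JW$, so $w$ is $W_J$-conjugate to some $vx$ with $x \in W_{K_v}$ and $\mathcal{O}_J(w) = W_J \cdot v\mathcal{C}$, where $\mathcal{C}$ is the $\mathrm{Ad}(v)$-twisted conjugacy class of $x$ in $W_{K_v}$. \cref{cor:counting} then gives $\sharp \mathcal{O}_J(w) = [W_J : W_{K_v}] \cdot \sharp \mathcal{C}$, so $\mathcal{O}_J(w)$ is finite if and only if both $[W_J : W_{K_v}] < \infty$ and $\sharp\mathcal{C} < \infty$.

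The first condition pins down $v$. Since $W_J$ is infinite and irreducible, a proper standard parabolic subgroup has infinite index (the minimal coset representatives ${}^{K}W_J$ form an infinite set whenever $K \subsetneq J$); hence $[W_J:W_{K_v}]<\infty$ forces $K_v = J$, i.e. $v \cdot J = J$ and $v \in N_W(W_J)$. As $J$ is irreducible and non-spherical, \cref{lem:normalizer} applies and $N_W(W_J) = W_J \times W_{J^\perp}$. Writing $v = ab$ with $a \in W_J$, $b \in W_{J^\perp}$ and using that the length is additive across the commuting disjoint supports $J$ and $J^\perp$, minimality of $v$ in $W_J v$ forces $a = e$, so $v \in W_{J^\perp}$. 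In particular $v$ commutes with $W_{K_v} = W_J$, so $\mathrm{Ad}(v) = \mathrm{id}$ and $\mathcal{C}$ is simply the ordinary $W_J$-conjugacy class of $x$; moreover $\sharp\mathcal{O}_J(w) = \sharp\mathcal{C}$ and $w$ is $W_J$-conjugate to $vx$ with $v \in W_{J^\perp}$, $x \in W_J$.

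It remains to classify the elements $x$ of an infinite irreducible Coxeter group $W_J$ whose conjugacy class is finite. If $x = e$ then $w \in W_{J^\perp}$, which is case (2). If $x \neq e$, I claim $W_J$ must be affine with $x$ a translation, giving case (3). In the affine case $W_J = W_0 \ltimes T$ the direct computation $t_\mu (u\,t_\lambda) t_\mu^{-1} = u\, t_{(u^{-1}-1)\mu + \lambda}$ shows that if the linear part $u \neq 1$ then conjugating by the lattice already produces infinitely many elements, whereas a translation $t_\lambda$ has conjugacy class $\{t_{u\lambda} : u \in W_0\}$, which is finite; this isolates exactly the translations. For the non-affine case I must show every $x \neq e$ has infinite conjugacy class: the plan is to exhibit a non-spherical subset $J' \subseteq J$ for which $H := J' \cap \bar x \cdot J'$ is spherical (with $\bar x$ the minimal element of $W_{J'} x W_{J'}$) and invoke \cref{lem:MarquisComb}, which makes $U^+_{J'}(x)$ infinite; since $U^+_{J'}(x) \subseteq W_{J'}\cdot x \subseteq \mathcal{O}_J(x)$, the full class is infinite.

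The main obstacle is precisely this last non-affine step: producing, for every nontrivial $x$ in an infinite irreducible \emph{non-affine} Coxeter group, a non-spherical $J'$ with $J' \cap \bar x \cdot J'$ spherical. This is where non-affineness is genuinely used---in the affine case no such $J'$ exists for translations, which is exactly why case (3) survives---and it is the point at which the CAT($0$) geometry of the Davis complex is expected to enter, ruling out the ``degenerate'' affine-type displacement behaviour and guaranteeing the required combinatorial configuration. By contrast, the converse (sufficiency) directions are immediate: in case (1) $W_J$ is finite, in case (2) $w$ centralizes $W_J$ so $\mathcal{O}_J(w) = \{w\}$, and in case (3) the class is $w_1$ times a finite $W_0$-orbit of translations.
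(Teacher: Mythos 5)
Your reduction follows the same skeleton as the paper's proof: use \cref{thm:partialconjdecom} and \cref{cor:counting} to force $K_v=J$, then \cref{lem:normalizer} to get $v\in W_{J^\perp}$ with $\Ad(v)=\id$, and finally classify the finite conjugacy classes inside the infinite irreducible group $W_J$ itself. The problem is that you never carry out the decisive last step, and you say so yourself: you do not prove that every nontrivial element of an infinite irreducible \emph{non-affine} Coxeter group has infinite conjugacy class. The paper closes exactly this step by citing \cite[Lemma 4.6]{marquis2024centre}; without that input (or a reproof of it) the theorem is simply not established. Worse, the plan you sketch for this step cannot work as stated. For $x\in W_J$, taking $J'=J$ gives $W_{J'}xW_{J'}=W_J$, so $\bar x=e$ and $J'\cap \bar x\cdot J'=J$ is non-spherical, and \cref{lem:MarquisComb} says nothing; and in the critical case where every proper subset of $J$ is spherical but $J$ is not affine (compact hyperbolic type, e.g.\ the $(2,3,7)$ triangle group) there is no proper non-spherical $J'$ available either. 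So no choice of $J'$ can make the lemma fire, and the geometric (CAT($0$) displacement) argument behind Marquis's result is genuinely unavoidable here, not merely ``expected.''

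There is a second, smaller gap: you assert that a proper standard parabolic subgroup of an infinite irreducible Coxeter group has infinite index, offering only a restatement in parentheses as justification. The fact is true but not obvious (it fails without irreducibility) and needs a proof or a citation. The paper's ordering of the argument avoids it entirely: it first proves, by contradiction, that all proper subsets of $J$ are spherical --- take a minimal non-spherical $J'\subsetneq J$, conjugate $w$ so that its support is large enough, deduce from \cref{lem:normalizer} that $\bar w\cdot J'\neq J'$, so $J'\cap \bar w\cdot J'$ is spherical and \cref{lem:MarquisComb} makes $U^+_J(w)$, hence $\co_J(w)$, infinite --- after which the index condition in \cref{cor:counting} trivially forces $K_v=J$, since a finite subgroup cannot have finite index in the infinite group $W_J$. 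Reorganizing your argument in this order would repair the index gap; the non-affine classification, however, still requires \cite[Lemma 4.6]{marquis2024centre} or an equivalent geometric argument.
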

    \begin{proof}
        In all three cases, it is clear $\co_J(w)$ is finite. Only case (iii) needs some illustration. In fact, if we consider the decomposition of an affine Weyl group $W_J=W_0\ltimes X$ into a finite Weyl group $W_0$ and the coroot lattice $X$, and write $\tilde{w}=wt^\mu, \tilde{w}\in W_J, w\in W_0, \mu\in X$ (written exponentially as $t^\mu$), we have $wt^\mu w^{-1} =t^{w\mu}$ and $t^{\mu'}t^\mu t^{-\mu'}=t^\mu$. Hence $(wt^\mu) w't^{\mu'} (wt^\mu)\i = ww'w\i t^{w(w'\i\mu-\mu+\mu')}$ and the infinitude of conjugacy class of $w't^{\mu'}$ only depends on $\{w'\i\mu-\mu\ |\ \mu\in X\}$. It is finite only for translation elements ($w'=e$).
        
        Conversely, we want to show that these are the only three cases that can occur. Suppose $\co_J(w)$ is finite. The first two cases are obvious, so we may assume $J\subset S$ to be irreducible and non-spherical, and $w\notin W_{J^\perp}$, and we want to reduce it to the case $(iii)$.

        First, we claim that all proper subsets of $J$ are spherical. Assume not, then we may find a minimal non-spherical subset $J'\subsetneq J$ so that all proper subsets of $J'$ are spherical. Moreover, $J'\sqcup J'^\perp\subsetneq J$ as $J$ is irreducible. We may conjugate $w$ inside $U_J^+(w)$ so that $\supp w=J$, otherwise, for any $s\in J\setminus \supp w$, either $\ell(sws)=\ell(w)+2$ or $s\in (\supp w)^\perp$ (see e.g. \cite[Lemma 2.37]{abramenko2008buildings}). But then this implies $w\notin N_W(W_{J'})=W_{J'\sqcup J'^\perp}$ by \cref{lem:normalizer}. Hence $\bar{w}\cdot J'\neq J'$ (otherwise $w$ will normalize $W_{J'}$). As a result, $H:=J'\cap \bar{w}\cdot J'\subsetneq J'$ is spherical. Then $U^+_{J'}(w)$ is infinite by \cref{lem:MarquisComb} and so does $U^+_J(w)$ and $\co_J(w)$.

        Next, we have $\bar{w}\cdot J= J$, for otherwise, $\bar{w}\cdot J\cap J$ would be spherical and by \cref{lem:MarquisComb}, $\co_J$ would be infinite. Let $v\in {}^JW$ so that $w\in W_J\cdot vW_{K_v}$ in \cref{thm:partialconjdecom}(i). By \cref{cor:counting},  $\sharp W_J /W_K$ being finite forces $K_v=J$, i.e., $v\cdot J=J$. Hence, $v\in N_W(W_J)=W_J \times W_{J^\perp}$. As $v\in {}^JW$, we have $v\in W_{J^\perp}$. In this case, $\Ad(v)=\id$, and up to $W_J$ conjugation, $w=vw_J$ for some $w_J\in W_J$. Therefore, the partial conjugacy class $\sharp \co_J(w)=\sharp \co_J(w_J)$ is finite if and only if $J$ is affine and $w_J$ is a translation \cite[lemma 4.6]{marquis2024centre}.
        \end{proof}

    \section{Infinitude of cyclic shift classes}
        In this section, we will show that the cyclic shift class is infinite precisely when the conjugacy class is infinite, generalizing results in \cite[Section 4]{marquis2024centre}.
        
        Let $X=X(W,S)$ be a Davis complex associated with the Coxeter group $(W,S)$ with metric $d$. Let $J\subset S$. Let $X_J$ be the subcomplex of $X$ with the $1$-skeleton being the Cayley graph of $W_J$ and only Coxeter cells $P_I$, $I\subset J$ spherical. Note that as the metric is piecewisely defined on each $P_J$, it makes sense to restrict the metric on each $P_J$, and they glue into a metric, denoted as $d_J$, on $X_J$, making $(X_J,d_J)$ into a Davis complex, which is again a CAT($0$)-space. Note that although $d_J$ is different from $\restr{d}{X_J}$ in general, if for $x,y\in X$, the geodesic $[x,y]\subset X_J$, then $d_{J,w}=d_w$ on $[x,y]$. For $w\in W$, define the three important functions 
        $$d_w:X\to \mathbb{R}, x\mapsto d(x,wx)$$
        $$d_{J,w}:X_J\to \mathbb{R}, x\mapsto d_J(x,wx)$$ $$d_{\Ch,w}:\Ch(X)\to \mathbb{N}, C\mapsto d_{\Ch}(C,wC)$$
        Note that when $J=S$, $d_{J,w}=d_w$. We will use the same notation $d_{\Ch}$ for the chamber distance of any Davis complex when the context is clear. We shall explain the basic properties of these three functions.
        The first two are displacement functions on Davis complexes. It is a result of CAT($0$)-space that displacement functions are convex, see for example \cite[II, 6.2(3)]{bridson2013metric}. In particular, both $\restr{d_w}{X_J}$ and $d_{J,w}$ are convex on $X_J$. The third function is a chamber-wise function related to the lengths of conjugates of $w$: for $D\in \Ch(X)$, $d_{\Ch,w}(D)=d_{\Ch}(D,wD)=\ell(\pi_w(D))$. In particular, the statement that two $s$-adjacent chambers $D$ and $D'$, with $d_{\Ch,w}(D)\leq d_{\Ch,w}(D')$ translates to $\pi_w(D')\xrightarrow{s} \pi_w(D)$. 

        We call a chamber $C\in \Ch(X)$ as \emph{$(w,J)$-local maximum} if every non-decreasing $J$-gallery is flat, i.e., for every $\Gamma=(D_0=C,D_1,\dots,D_k)$ in $\Ch(X)$, with $d_{\Ch,w}(D_i)\leq d_{\Ch,w}(D_{i+1})$ for all $i$, we have $d_{\Ch,w}(D_i)= d_{\Ch,w}(D_{i+1})$ for all $i$. 

    First, we quote a result on the relations of $d_{\Ch,w}$ and $d_w$.

    \begin{prop}\cite[Proposition 4.4]{marquis2024centre}\label{prop:chamber_to_metric}
        Let $X'=X'(W',S')$ be a Davis complex with metric $d_w'$. Let $w\in W'$. Let $C,D\in \Ch(X')$ be adjacent chambers separated by wall $m$. Let $x\in \int(C)$, $y:=r_m(x)\in \int(D)$ and $z:=[x,y]\cap m$. Then $d_{\Ch,w}(C)<d_{\Ch,w}(D)$ implies $d_w'$ is strictly increasing on $[z,y]$.
    \end{prop}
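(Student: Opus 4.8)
The plan is to combine the convexity of the displacement function with a first-variation computation, using the chamber-distance hypothesis only to locate the images $wp$ and $w^{-1}p$ on the correct side of the wall $m$. Write $d'$ for the metric on $X'$, so that $d_w'(p)=d'(p,wp)$, and recall that on a $\mathrm{CAT}(0)$ space displacement functions are convex \cite[II.6.2(3)]{bridson2013metric}; in particular $d_w'$ is convex along the geodesic $[x,y]$. Let $m$ cut $X'$ into two convex half-spaces $H^+\supseteq C$ and $H^-\supseteq D$, so that $x\in\int(H^+)$, $y\in\int(H^-)$, and $z=[x,y]\cap m$ is the foot of the perpendicular from $y$ to $m$. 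Since a convex function whose right-derivative is strictly positive at every interior point of an interval is strictly increasing on the closed interval, it suffices to show that for each $p\in(z,y)$ the right-derivative of $d_w'$ along $[z,y]$, in the direction $u$ pointing toward $y$, is strictly positive.

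The first step is to pin down the sides. A wall-counting argument shows $d_{\Ch,w}(D)-d_{\Ch,w}(C)\in\{-2,0,2\}$: only the walls $m$ and $wm$ can separate the pair $(D,wD)$ differently from $(C,wC)$, and (when $m\neq wm$) each of them separates exactly one of these two pairs, while the case $wm=m$ forces the difference to be $0$. The strict hypothesis $d_{\Ch,w}(C)<d_{\Ch,w}(D)$ therefore yields difference $+2$ and $wm\neq m$, which means $m$ separates $D$ from $wD$. As $D\subseteq H^-$, this gives $wD\subseteq H^+$; and since $d_{\Ch,w}=d_{\Ch,w^{-1}}$ (lengths are inversion-invariant), the same argument applied to $w^{-1}$ gives $w^{-1}D\subseteq H^+$. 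Consequently, for $p$ in the open segment $(z,y)\subseteq\int(D)\subseteq\int(H^-)$ (which lies in $\int(D)$ because $D$ is convex, $z$ lies in a bounding panel, and $y\in\int(D)$) we have $wp,w^{-1}p\in\int(H^+)$.

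Now I would compute the right-derivative at such a $p$. Applying the isometry $w$ to the unit-speed geodesic issuing from $p$ in direction $u$ and invoking the first-variation formula for $d'$ (e.g.\ \cite{bridson2013metric}),
\[
\partial_u d_w'(p)=-\cos\angle_p\bigl(u,\mathrm{dir}(p\to wp)\bigr)-\cos\angle_{wp}\bigl(w_*u,\mathrm{dir}(wp\to p)\bigr).
\]
Because $w$ is an isometry, the second term equals $-\cos\angle_p\bigl(u,\mathrm{dir}(p\to w^{-1}p)\bigr)$. For the convex set $\overline{H^+}$, whose nearest-point projection sends $p$ to $z$, the standard $\mathrm{CAT}(0)$ projection estimate \cite[II.2.4]{bridson2013metric} gives $\angle_p\bigl(\mathrm{dir}(p\to z),\mathrm{dir}(p\to q)\bigr)<\pi/2$ for every $q\in\overline{H^+}\setminus\{z\}$; applying this to $q=wp$ and $q=w^{-1}p$ (both in $\int(H^+)$, hence $\neq z$) and using $\mathrm{dir}(p\to z)=-u$ together with the fact that $u,-u$ are opposite directions of a geodesic, both angles $\angle_p(u,\cdot)$ exceed $\pi/2$, so both cosines above are strictly negative. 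Thus $\partial_u d_w'(p)>0$ for all $p\in(z,y)$, and convexity upgrades this to strict monotonicity of $d_w'$ on $[z,y]$.

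I expect the main obstacle to be the second paragraph: correctly extracting, from a single chamber-distance inequality, that the images lie on the prescribed side of $m$. This is the point where the discrete datum $d_{\Ch,w}$ must be converted into genuine side information, and one must be careful about the degenerate possibility $wm=m$ (ruled out by strictness) and about the strictness of the angle estimates. The latter is precisely why it is cleaner to evaluate the derivative at interior points $p\in(z,y)$, where $wp$ and $w^{-1}p$ already lie in the \emph{open} half-space, rather than directly at $z$, where the geodesic $[z,wz]$ could a priori run inside the wall and make the first variation vanish.
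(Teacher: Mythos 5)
The paper never proves this proposition: it is imported verbatim from \cite[Proposition 4.4]{marquis2024centre}, with only the one-line gloss that it follows from ``the convexity of $d_w$ and the strong convexity of $X$.'' So your proposal is not paralleling an argument in the paper but supplying one, and in substance it is correct and realizes exactly that gloss. The wall-counting step is the right (and essentially Marquis's) way to turn the discrete hypothesis into side information: for any wall $n\notin\{m,wm\}$ the pairs $(C,wC)$ and $(D,wD)$ have the same separation status, the case $wm=m$ forces equality of chamber distances, and strictness therefore forces $wm\neq m$ with both $m$ and $wm$ separating $D$ from $wD$; inversion-invariance of $d_{\Ch,w}$ then handles $w^{-1}D$. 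The analytic half (projection, angles, first variation, convexity upgrade) is also sound, but three of your citations carry more weight than the statements they literally contain, and each needs a short standard repair: (a) \cite[II.2.4]{bridson2013metric} bounds the angle \emph{at the projection point}, i.e.\ $\angle_z(p,q)\ge\pi/2$ for $q\in\overline{H^+}\setminus\{z\}$; your claim $\angle_p\bigl(\mathrm{dir}(p\to z),\mathrm{dir}(p\to q)\bigr)<\pi/2$ follows only after one more step (the Euclidean comparison triangle for $(p,z,q)$ has angle $\ge\pi/2$ at $\bar z$, hence angle $<\pi/2$ at $\bar p$, and $\angle_p\le\bar\angle_p$), and you should also record why $\pi_{\overline{H^+}}(p)=z$ at all: every geodesic from $p$ to $\overline{H^+}$ crosses $m$, and for $c\in m$ one has $2d(p,c)=d(p,c)+d(r_m(p),c)\ge d(p,r_m(p))=2d(p,z)$ because $[x,y]$ is $r_m$-invariant with midpoint $z$; (b) the two-endpoint first-variation formula is used as a black box, whereas all you need is the \emph{lower} bound $\partial_u d_w'(p)\ge -\cos\angle_p(u,\mathrm{dir}(p\to wp))-\cos\angle_p(u,\mathrm{dir}(p\to w^{-1}p))$, which follows from two applications of the one-endpoint comparison inequality together with continuity of angles in the second argument ($\angle_q(v,\mathrm{dir}(q\to x))$ varies by at most the comparison angle $\bar\angle_q(x,p)\to 0$ as $x\to p$) --- the convexity-based derivation of the formula only gives an upper bound, which is useless here, so this direction deserves a sentence; (c) the final upgrade to strict monotonicity on the \emph{closed} segment $[z,y]$ needs continuity of $d_w'$ at $z$ (automatic, since displacement functions are $2$-Lipschitz), because a convex function with positive right derivatives on the open interval can still jump down at the left endpoint. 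None of these is a genuine gap; with those repairs your argument is a complete, self-contained proof of a statement the paper only quotes, and it usefully isolates where the degenerate cases ($wm=m$, or $wp$ landing on $m$) are excluded.
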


    This result is essentially due to the convexity of $d_w$ and the strong convexity of $X$.

    We also need to quote a lemma for the Davis complex.
    \begin{lem}[{\cite[Lemma 2.5]{marquis2024centre}}] \label{lem:extgeod}
        Assume that $(W,S)$ is infinite, irreducible, but all proper subsets of $S$ are spherical.
        \begin{enumerate}
        \item
        Let $x$ be the barycentre of a chamber $C$, let $m_1,\dots,m_k$ be the walls of $C$, and for each $i=1,\dots,k$, choose a point $x_i\in [x,r_{m_i}(x)]\setminus \{x\}$. Then the convex hull of $\{x_1,\dots,x_k\}$ contains an open neighbourhood of $x$.
        \item 
        Every geodesic segment in $X$ can be extended to a geodesic line.
        \end{enumerate}
    \end{lem}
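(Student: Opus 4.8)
The plan is to work throughout in the Davis complex $X$ and to exploit the standing hypothesis that $(W,S)$ is minimal non-spherical: since every proper subset of $S$ is spherical while $S$ itself is not, the nerve $L(W,S)$ is exactly the boundary $\partial\Delta^{S}$ of the full simplex on the vertex set $S$, a combinatorial $(|S|-2)$-sphere. Consequently $\dim X=|S|-1$, the maximal spherical subsets are precisely the sets $S\setminus\{s\}$ for $s\in S$, and the link of every vertex is (piecewise-spherically) isometric to $L$, hence a genuine sphere without boundary \cite{davis2008geometry}. This structural fact is the engine behind both parts.

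For part (i) I would first reduce to $C=C_0$: the group $W$ acts on $X$ by cell-preserving isometries, transitively on chambers, so applying the isometry carrying $C$ to $C_0$ preserves walls, reflections, barycentres, convex hulls and neighbourhoods. Writing $x=x_{C_0}$ and $k:=|S|$, the $k$ walls of $C_0$ are the $m_s$ with $r_{m_s}=s$, and $r_{m_s}(x)=x_{sC_0}$ is the barycentre of the $s$-adjacent chamber; the segment $[x,r_{m_s}(x)]$ is the $s$-edge of the $1$-skeleton, crossing $m_s$ perpendicularly at its midpoint. Thus each $x_i$ lies on one of the $k$ edges issuing from $x$, in a direction $u_i\in\mathrm{Lk}(x)$, and the assertion becomes the statement that $\{u_1,\dots,u_k\}$ \emph{positively spans} the tangent cone $T_xX$, i.e.\ generates it as a cone. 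I would establish this by tiling: each maximal spherical cell $P_{S\setminus\{s\}}$ is a $(k-1)$-dimensional Euclidean Coxeter cell having $x$ as a vertex, and the edges of $P_{S\setminus\{s\}}$ at $x$ are exactly $\{u_t : t\neq s\}$, so its tangent cone at $x$ is the full-dimensional polyhedral cone $K_s:=\mathrm{cone}\{u_t : t\neq s\}$. Because the nerve is the full sphere $\partial\Delta^{S}$, these $k$ cells close up around $x$ with no boundary, so their tangent cones cover everything, $T_xX=\bigcup_s K_s$; since each $K_s\subseteq\mathrm{cone}\{u_1,\dots,u_k\}$ this forces $\mathrm{cone}\{u_1,\dots,u_k\}=T_xX$, which is precisely positive spanning. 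A short first-order CAT($0$) argument then upgrades positive spanning of the directions to the conclusion that the geodesic convex hull of the nearby points $x_i$ contains a neighbourhood of $x$.

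For part (ii) I would prove geodesic extension locally and then globalize. At a non-vertex point the ambient cell (or a local product splitting) is Euclidean, so a geodesic extends straight. The only delicate points are the vertices, and there part (i) does the work: since the convex hull of the neighbours contains a full neighbourhood of $x$, the space of directions $\mathrm{Lk}(x)$ is a boundaryless sphere, every incoming direction admits an antipode at distance $\ge\pi$, and the geodesic continues through $x$. Having local extendability at every point, I would extend a given segment beyond both endpoints; because $X$ is a complete, proper, cocompact CAT($0$) space, each extension can be taken of at least a uniform length, so the extensions reach arbitrary length and no Zeno accumulation occurs. Finally the resulting bi-infinite path is a local geodesic, hence an honest geodesic line by the local-to-global property of geodesics in CAT($0$) spaces \cite[II.1]{bridson2013metric}.

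The main obstacle I anticipate is not the combinatorics but the metric bookkeeping in the (generally non-flat) tangent cones and links: passing from ``the direction vectors generate the tangent cone'' to ``the convex hull of the $x_i$ contains a neighbourhood,'' and from ``the link is a boundaryless sphere'' to ``antipodal directions exist,'' both require care, since in the compact-hyperbolic (Lann\'er) case the vertex cone angles exceed $2\pi$ and the links are strictly larger than round spheres. The saving grace is uniform: the minimal non-spherical hypothesis makes every vertex link a full nerve sphere, so no direction is ever missing, and the CAT($0$)/CAT($1$) convexity estimates of \cite[II.2, II.5]{bridson2013metric} apply uniformly.
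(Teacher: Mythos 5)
The first thing to note is that the paper does not prove this lemma at all: it is imported verbatim, with proof deferred to \cite[Lemma 2.5]{marquis2024centre}, so the only meaningful comparison is between your sketch and what a complete proof requires. Your structural observations are correct and are the right starting point: minimal non-sphericity forces the nerve to be $\partial\Delta^{S}$, so the maximal cells at the barycentre $x$ are the Coxeter cells $P_{S\setminus\{s\}}$, whose edges at $x$ are exactly the segments $[x,r_{m_t}(x)]$, $t\neq s$; the link of $x$ is a CAT(1) triangulated $(|S|-2)$-sphere; and one may assume $C=C_0$ by equivariance. The genuine gap is that the step you call ``a short first-order CAT($0$) argument'' is precisely the mathematical content of part (i), and your framing of it does not survive the compact hyperbolic (Lann\'er) case. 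There the tangent cone at $x$ is the Euclidean cone over a CAT(1) complex strictly larger than the round sphere: it is not a vector space, so ``positively spans'' is undefined, and the Euclidean implication ``positive spanning $\Rightarrow$ the origin is interior to the convex hull'' has no off-the-shelf CAT(0) analogue --- convex hulls of finite sets in CAT(0) spaces are notoriously uncontrollable (they need not even be closed). You flag this yourself as ``the main obstacle,'' which means the proposal stops exactly where the lemma begins. One way to actually close it: all the points $\{x_t : t\neq s\}$ lie in the single cell $P_{S\setminus\{s\}}$, which is flat and convex in $X$, so the hull contains each Euclidean simplex $\Delta_s=\mathrm{conv}\{x_t : t\neq s\}$; the union $\bigcup_s\Delta_s$ is a topological $(|S|-2)$-sphere mapped homeomorphically onto the link by radial projection; since closed convex subsets of CAT(0) spaces are contractible while this sphere is not null-homotopic in $X\setminus B(x,\delta)$ (compose any contraction with the geodesic retraction onto the small metric sphere $S(x,\delta)$, on which $\bigcup_s\Delta_s$ has degree one), the closed hull must contain $x$; star-shapedness in every link direction then yields a ball, and a density argument passes from the closed hull to the hull. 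Some such topological/degree mechanism (equivalently, Jordan separation using that sphere links make $X$ a manifold) is unavoidable and is absent from your proposal.

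Part (ii) also contains an error: at a point in the relative interior of a lower-dimensional cell $P_I$, the space is locally $\mathbb{R}^{|I|}\times\mathrm{Cone}\bigl(\mathrm{Lk}(P_I)\bigr)$, which in the Lann\'er case is \emph{not} Euclidean, so ``a geodesic extends straight'' fails as stated. What saves the argument is that the link of every cell, not just of every vertex, is again a nerve-type CAT(1) sphere (the spherical subsets strictly containing $I$ are exactly the proper subsets of $S$ containing $I$), and a compact CAT(1) space of radius $<\pi$ is contractible while a sphere is not, so every incoming direction has an antipode at distance $\geq\pi$ at every point; alternatively, and most cleanly, sphere links make $X$ a homology manifold, and complete CAT(0) homology manifolds have the geodesic extension property \cite[II.5.10]{bridson2013metric}. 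Your globalization step (maximal extension via completeness, then the local-to-global property of local geodesics in CAT(0) spaces) is fine.
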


    \begin{prop}\label{prop:localmaximum}
        Assume that all proper subsets of $J$ are spherical. Let $w\in W$. If there exists a $(w,J)$-local maximum, all $W_J$-conjugates of $w$ are of the same length.
    \end{prop}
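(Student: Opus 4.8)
The plan is to convert the combinatorial local-maximum hypothesis into the convexity of a displacement function on $X_J$ and then invoke that a convex function attaining a local maximum on a geodesically complete CAT($0$) space is constant. Fix the residue $R=R_J(C)$ through the given local maximum $C$, realize it as $X_J$, and write $x=x_C$ for the barycenter. The values of $d_{\Ch,w}$ along $R$ are exactly the lengths of the $W_J$-conjugates of $\pi_w(C)$, so it suffices to show $d_{\Ch,w}$ is constant on $R$. If $J$ is spherical then $R$ is finite and the statement is elementary, so assume $J$ is non-spherical; then $(W_J,J)$ is infinite and irreducible with all proper subsets spherical, and \cref{lem:extgeod} applies to $X_J$. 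As a first consequence, the hypothesis forces $d_{\Ch,w}(D_s)\le d_{\Ch,w}(C)$ for every $s\in J$, where $D_s$ is the $s$-adjacent chamber in $R$ (otherwise $(C,D_s)$ is a non-flat non-decreasing gallery), so $C$ dominates its residue-neighbours; split $J=J_<\sqcup J_=$ according to strict decrease or equality.

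Next I would promote this to a metric statement at $x$. For $s\in J_<$, \cref{prop:chamber_to_metric} applied to the wall $m_s$ separating $C$ and $D_s$ shows $d_w$ is strictly decreasing as one moves from $x$ toward $x_{D_s}=r_{m_s}(x)$. For $s\in J_=$ I must show $d_w$ is still non-increasing on $[x,x_{D_s}]$ near $x$; this flat-direction estimate is the technical core, and I would establish it by the CAT($0$) analysis underlying \cref{prop:chamber_to_metric}, using that $\ell(\pi_w(D_s))=\ell(\pi_w(C))$ forces the minimum of the convex function $d_w$ on $[x,x_{D_s}]$ to occur at the wall $m_s$. Choosing $x_s\in[x,x_{D_s}]\setminus\{x\}$ close to $x$, we then have $d_w(x_s)\le d_w(x)$ for all $s\in J$. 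By \cref{lem:extgeod}(i) the convex hull of $\{x_s\}_{s\in J}$ contains a neighbourhood of $x$ in $X_J$, and since $\restr{d_w}{X_J}$ is convex on $X_J$ (as recorded in the setup), convexity gives $d_w\le d_w(x)$ throughout that neighbourhood. Hence $x$ is a genuine local maximum of the convex function $\restr{d_w}{X_J}$.

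To globalize, I would use \cref{lem:extgeod}(ii): every geodesic segment of $X_J$ extends to a geodesic line, so restricting the convex function $\restr{d_w}{X_J}$ to any line through $x$ yields a convex function on $\mathbb{R}$ with an interior local maximum, which is necessarily constant. Since every point of $X_J$ lies on a line through $x$, the function $\restr{d_w}{X_J}$ is constant. Finally, the contrapositive of \cref{prop:chamber_to_metric} returns this to lengths: if some residue-adjacent pair $C',D'$ had $d_{\Ch,w}(C')<d_{\Ch,w}(D')$, then $d_w$ would strictly increase along part of $[x_{C'},x_{D'}]\subset X_J$, contradicting constancy. Thus $d_{\Ch,w}$ is constant on $R$, proving that all $W_J$-conjugates of $w$ share one length.

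The step I expect to be the main obstacle is the flat-direction estimate for $s\in J_=$. \cref{prop:chamber_to_metric} only governs strict changes in chamber distance, whereas ruling out a metric increase into an equal-length neighbour requires directly analysing the geometry of $d_w$ at the wall $m_s$ — showing that its restriction to the crossing segment is minimized on the wall — rather than quoting the proposition as a black box. Everything else is a fairly mechanical assembly of convexity, \cref{lem:extgeod}, and the chamber-to-metric dictionary.
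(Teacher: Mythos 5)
Your skeleton matches the paper's (handle strictly-decreasing walls via \cref{prop:chamber_to_metric}, then contradict convexity of $d_{J,w}$ using the convex hull and geodesic extension of \cref{lem:extgeod}), but the step you flag as the ``technical core'' is a genuine gap, and it cannot be closed in the way you suggest. For an arbitrary local maximum $C$, the flat-direction estimate is simply false: the equality $d_{\Ch,w}(D_s)=d_{\Ch,w}(C)$ constrains only the combinatorial displacement, while $d_w(x_D)=d\bigl(x_{C_0},\pi_w(D)\,x_{C_0}\bigr)$ depends on which conjugate $\pi_w(D)$ is, and distinct conjugates of the same length generally have different metric displacements (for an elliptic $w$ in an affine group, $d_w$ at barycenters is essentially the distance to the fixed point, which varies along a level set of $\ell\circ\pi_w$). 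So $d_w$ can be strictly increasing along all of $[x,x_{D_s}]$ even though the wall is ``flat'', its minimum sitting at $x$ rather than on $m_s$; \cref{prop:chamber_to_metric}, whose hypothesis is a strict inequality of chamber distances, says nothing in this situation. The paper's proof supplies exactly the normalization you are missing: it first shows that $\{d_w(x_D)\mid D\in C_{\approx}\}$ is finite, where $C_{\approx}$ is the flat class of $C$ (constancy of $d_{\Ch,w}$ on $C_{\approx}$ gives finitely many conjugates $\pi_w(D)$, hence finitely many $Z_W(w)$-orbits of chambers, and $d_w$ is constant on each orbit), and then replaces $C$ by a chamber of $C_{\approx}$ on which $d_w$ is maximal. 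Only after this choice do equal neighbours satisfy $d_w(x_{D_s})\le d_w(x_C)$, so that convexity on the edge yields the dichotomy (strictly increasing toward $x_C$, or constant) that feeds the convex-hull contradiction.

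Your globalization step also fails as stated: a convex function on $\mathbb{R}$ with an interior non-strict local maximum need not be constant --- $t\mapsto\max(0,|t|-1)$ is convex, has a local maximum at every point of $(-1,1)$, yet is unbounded. Convexity only forces local constancy near such a maximum, and the function may grow after leaving that region (displacement functions of hyperbolic isometries, constant on their min sets and growing off them, have exactly this shape), so ``constant on every line through $x$, hence on $X_J$'' does not follow. The paper avoids any global metric statement of this kind: having shown that every $J$-adjacent chamber of the renormalized $C$ again lies in $C_{\approx}$ with the same (maximal) value of $d_w$, each neighbour satisfies the same hypotheses as $C$, and induction along $J$-galleries shows that the whole $J$-residue is flat --- which is the desired conclusion directly, with no constancy of $d_w$ on $X_J$ needed.
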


    \begin{proof}
        Let $C$ be a $(w,J)$-local maximum and consider the set of $(w,J)$-local maxima $C_{\approx}:=\{D\in \Ch(X)\ |\ \text{there is a $J$-gallery } (D_0=C,D_1,\dots,D_k=D) \text{ with } d_{\Ch,w}(D_i)=d_{\Ch,w}(C) \text{ for all }i\}$. Denote by $x_D$ the barycenter of the chamber $D$. 
        
        First, we claim that the set $\{d_w(x_D)\ |\ D\in C_{\approx}\}$ is finite. As $d_{\Ch,w}$ is constant on $C_{\approx}$, the set $\{\pi_w(D)\ |\ D\in C_{\approx}\}$ is finite. Moreover, $\pi_w(D)=\pi_w(D')$ if and only if $D$ and $D'$ are in the same $Z_W(w)$-orbit, so $C_{\approx}$ is covered by finitely many $Z_W(w)$-orbits of chambers. But then $d_w$ is constant on $Z_W(w)$-orbits, the set is finite as claimed.

        Now we may modify $C$ inside $C_{\approx}$ such that $d_w(x_C)$ is maximal. Let $D_i$ be the $J$-adjacent chambers of $C$ separated by walls $m_i$, $i=1,\dots,k$. By assumption of $C$, we have $d_{\Ch,w}(D_i)\leq d_{\Ch,w}(C)$. Hence we have only two cases, either $d_{\Ch,w}(D_i)<d_{\Ch,w}(C)$ or $d_{\Ch,w}(D_i)= d_{\Ch,w}(C)$. We claim that only the second case will happen. In the first case, by \cref{prop:chamber_to_metric}, $d_w$ is strictly increasing on $[x_i,x_C]$ where $x_i:=[x_{D_i},x_C]\cap m_i$. In the second case, we have $d_w(x_{D_i})\leq d_w(x_{C})$ by maximality of $x_C$. As $d_w$ is convex, either it is strictly increasing on some $[x_i,x_C]$ for some $x_i\in [x_{D_i},x]\setminus\{x\}$ or it is constant on $[x_{D_i},x_C]$ (in which case we set $x_i=x_{D_i}$). Note that as $[x_{D_i},x_C]\subset X^{(1)}\subset X_J$, we may replace $d_w$ into $d_{J,w}$. If the first case does occur, then $d_{J,w}$ is strictly increasing on some $[x_j,x_C]$. By \cref{lem:extgeod}, we may extend $[x_j,x_C]$ to $[x_j,y]$ with $[x_j,x_C]\subsetneq [x_j,y]\subset X_J$, so that $d_{J,w}(x_i)\leq d_{J,w}(x_C)<d_{J,w}(y)$ for all $i$ and $y$ lies in the convex hull of $\{x_1,\dots,x_k\}$ inside $X_J$. But then this contracts to the convexity of $d_{J,w}$ on $X_J$.

        We have shown that for any chamber $D$ that is $J$-adjacent to $C$, we have $D\in C_{\approx}$ and $d_w(x_{D_i})=d_w(x_{C})$. We may inductively conclude this is the case for all chambers on a $J$-gallery from $C$. The proof is complete.

    \end{proof}

    \begin{thm}\label{thm:infU} Let $w\in W$ and $J\subset S$ irreducible. Then $\co_J(w)$ is infinite if and only if $U_J^+(w)$ is infinite.
    \end{thm}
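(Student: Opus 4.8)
The plan is to prove the two implications separately. The forward implication, $U^+_J(w)$ infinite $\Rightarrow \co_J(w)$ infinite, is immediate from the containment $U^+_J(w)\subseteq W_J\cdot w=\co_J(w)$ noted just after the definition of $U^+_J$. Moreover, if $J$ is spherical then $W_J$ is finite, so $\co_J(w)$ and hence $U^+_J(w)$ are both finite and the equivalence holds trivially. Thus the whole content lies in the reverse implication for $J$ non-spherical: assuming $\co_J(w)$ is infinite, I must exhibit infinitely many elements of $U^+_J(w)$. Since $J$ non-spherical forces $W_J$, and hence $W$, to be infinite, all the earlier machinery (\cref{thm:classification}, \cref{prop:localmaximum}, \cref{lem:MarquisComb}, \cref{lem:normalizer}) is available.

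The heart of the argument is the case where every proper subset of $J$ is spherical, which is exactly the hypothesis of \cref{prop:localmaximum}. Because a Coxeter group with finite $S$ has only finitely many elements of each length, an infinite $\co_J(w)$ contains $W_J$-conjugates of unbounded length, so its elements are not all of the same length. By the contrapositive of \cref{prop:localmaximum}, no chamber is a $(w,J)$-local maximum. Starting from the fundamental chamber $C_0$, for which $\pi_w(C_0)=w$, the failure of the local-maximum condition at each chamber lets me build an infinite non-decreasing $J$-gallery $C_0,D_1,D_2,\dots$ along which $d_{\Ch,w}$ strictly increases: at each stage $D_i$ is not a local maximum, so a non-decreasing $J$-gallery from $D_i$ reaches some $D_{i+1}$ with $d_{\Ch,w}(D_{i+1})>d_{\Ch,w}(D_i)$. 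Reading each initial segment backwards turns the non-decreasing gallery into a non-increasing one, and the dictionary ``for $s$-adjacent chambers $d_{\Ch,w}(D)\le d_{\Ch,w}(D')$ translates to $\pi_w(D')\xrightarrow{s}\pi_w(D)$'' converts it into a chain $\pi_w(D_i)\xrightarrow{J}w$. Hence each $\pi_w(D_i)\in U^+_J(w)$, and since $\ell(\pi_w(D_i))=d_{\Ch,w}(D_i)$ strictly increases, the $\pi_w(D_i)$ are pairwise distinct, so $U^+_J(w)$ is infinite.

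For general irreducible $J$ I would reduce to the previous case exactly as in the proof of \cref{thm:classification}. If some proper subset of $J$ is non-spherical, choose a minimal non-spherical $J'\subsetneq J$, which is automatically irreducible. Running the support-enlargement argument of \cref{thm:classification} inside $U^+_J(w)$ (replacing $w$ by longer conjugates only replaces $U^+_J(w)$ by a subset, so it is harmless for proving infiniteness), \cref{lem:normalizer} forces $w\notin N_W(W_{J'})$, hence $\bar w\cdot J'\ne J'$, and therefore $H:=J'\cap\bar w\cdot J'\subsetneq J'$ is spherical by minimality of $J'$. Then \cref{lem:MarquisComb} shows that $U^+_{J'}(w)$ is infinite, and $U^+_{J'}(w)\subseteq U^+_J(w)$ since any $\xrightarrow{J'}$-chain is a $\xrightarrow{J}$-chain, completing the reduction.

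The main obstacle is the geometric construction in the second paragraph: verifying rigorously that the absence of any $(w,J)$-local maximum propagates into a genuinely infinite, strictly length-increasing $J$-gallery, and that each truncation really certifies membership in $U^+_J(w)$. The bookkeeping relating the chamber distance $d_{\Ch,w}$, the map $\pi_w$, reversal of galleries, and the length identity $\ell(\pi_w(D))=d_{\Ch,w}(D)$ is where care is needed; everything else is either the trivial containment or a citation of the combinatorial reduction already carried out for \cref{thm:classification}.
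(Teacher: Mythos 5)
Your proposal is correct and takes essentially the same route as the paper: the trivial containment for the ``if'' direction, the reduction (via the support-enlargement argument of \cref{thm:classification} together with \cref{lem:normalizer} and \cref{lem:MarquisComb}) to the case where all proper subsets of $J$ are spherical, and then the geometric input of \cref{prop:localmaximum}. The only difference is that you run the key step directly---no $(w,J)$-local maximum exists, so an unbounded non-decreasing $J$-gallery can be built and each initial segment read backwards as a cyclic-shift chain certifying membership in $U^+_J(w)$---whereas the paper phrases the very same dictionary contrapositively, extracting a $(w,J)$-local maximum from a maximal-length element of a putatively finite $U^+_J(w)$; these are logically equivalent formulations of one argument.
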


    \begin{proof}
        The if direction is automatic. It remains to show the only if direction. Assume $\co_J(w)$ is infinite. Then $J$ must be non-spherical. 
        
        We may assume all proper subsets of $J$ are spherical, for otherwise, both $U_J^+(w)$ and $\co_J(w)$ are infinite by the argument in the proof of \cref{thm:classification}. 
        
        If $U_J^+(w)$ is finite, then the maximal length element inside $U_J^+(w)$ will produce a $(w,J)$-local maximum, which, by \cref{prop:localmaximum}, further implies $\co_J(w)$ is finite.
    \end{proof}

    \section{The centralizers $\mathcal{Z}_{\ch}(\ch_J)$}
        
        \subsection{The generic Hecke algebra}
            Let $(W,S)$ be a Coxeter group with the length function $\ell$. Let $R$ be a commutative ring. Let $\ch(W,S,(a_s,b_s)_{s\in S})$ be a \emph{generic Hecke algebra} as in the introduction.

            We may use another equivalent definition in practice, namely 
           \begin{equation*}
                T_sT_w = 
                \left\{
                \begin{aligned}
                    &a_s T_{w} + b_s T_{sw} & \text{if } \ell(sw) < \ell(w) \\
                    &T_{sw} & \text{if } \ell(sw) > \ell(w)
                \end{aligned}
                \right.
            \end{equation*}
            
            As centralizers of products are products of centralizers, we may assume $J$ is irreducible.
            \begin{ass}
                In the sequel, we assume $b_s$ are all invertible in $R$ and $J$ is irreducible.
            \end{ass} 
            As a consequence, $\{T_w\}_{w\in W}$ are all invertible. 
    
        Denote by $\ch=\ch(W,S,(a_s,b_s)_{s\in S})$ and $\ch_J=\ch(W_J,J,(a_s,b_s)_{s\in J})$ as a subalgebra of $\ch$.
        The centralizer (or \emph{partial center}) $\mathcal{Z}_{\ch}(\ch_{J})$ is a generalization of the center.

        \begin{prop}\label{prop: diamond}
            Write $x=\sum_{w\in W}x_wT_w$, $x_w\in R$. Then $x\in \mathcal{Z}_{\ch}(\ch_{J})$ if and only if
            \begin{enumerate}
                \item $x_w=x_{w'}$ whenever $w\approx_J w'$;
                \item $x_{w'}=b_sx_w-a_sx_{sw}$ whenever $w\xrightarrow{s} w'$ and $\ell(w)>\ell(w')$ for $s\in J$.
            \end{enumerate}
        \end{prop}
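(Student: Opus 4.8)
The plan is to characterize membership in $\mathcal{Z}_{\ch}(\ch_J)$ by testing the commutation condition $xT_s = T_s x$ only against the generators $T_s$ for $s \in J$, since these generate $\ch_J$ as an algebra and both sides are $R$-linear in $x$. So I would fix $s \in J$ and compute the coefficient of each $T_{w'}$ in $xT_s - T_s x$, then demand that every such coefficient vanish. The whole statement reduces to a careful bookkeeping of how the two multiplication rules $T_sT_w$ and $T_wT_s$ interact with the length comparison between $w$ and $sw$ (on the left) and between $w$ and $ws$ (on the right).

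The key computational step is as follows. Using the equivalent multiplication rule stated just above the proposition, one has $T_s T_w = T_{sw}$ when $\ell(sw) > \ell(w)$ and $T_s T_w = a_s T_w + b_s T_{sw}$ when $\ell(sw) < \ell(w)$, with the symmetric formulas for $T_w T_s$ governed by $\ell(ws)$ versus $\ell(w)$. Writing $x = \sum_w x_w T_w$, I would expand both $T_s x$ and $x T_s$ and collect, for a fixed target element $u$, the total coefficient of $T_u$ on each side. The four length types of $w$ relative to left/right multiplication by $s$ split the sum into cases; matching coefficients of $T_u$ across $T_s x = x T_s$ yields, for each pair $\{u, su\}$ or $\{u, us\}$ with a length drop, precisely a relation among $x_u$, $x_{su}$, $x_{us}$. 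I expect these relations to collapse into exactly the two stated conditions once one observes that $w \xrightarrow{s} w'$ with $\ell(w) > \ell(w')$ means $w' = sws$ with $\ell(sws) < \ell(w)$, i.e. both $\ell(sw) < \ell(w)$ and $\ell(ws) < \ell(w)$, whereas the length-preserving case $w \approx_J w'$ (via a single $s$) corresponds to $\ell(sw) = \ell(ws) = \ell(w)$ after conjugation balances out. Condition (i) captures equality of coefficients along length-preserving $s$-conjugation, and condition (ii) captures the inhomogeneous relation $x_{w'} = b_s x_w - a_s x_{sw}$ forced when the length strictly drops.

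The main obstacle I anticipate is the careful handling of the intermediate length case and the sign/coefficient bookkeeping, specifically ensuring that the relation comes out as $x_{w'} = b_s x_w - a_s x_{sw}$ rather than with swapped or mis-signed coefficients. Concretely, for an element $w$ with $\ell(sw) < \ell(w)$ and $\ell(ws) < \ell(w)$, both $T_s T_w$ and $T_w T_s$ produce an $a_s T_w$ term plus a $b_s T_{sw}$ (resp. $b_s T_{ws}$) term, and one must track how the coefficient of $T_{w'}$ (where $w' = sws$) assembles from the contributions of $x_w$, $x_{sw}$, and the neighboring elements. I would organize this by fixing the target index $w'$ and identifying exactly which source terms $x_v$ can contribute a $T_{w'}$ on each side; the nontrivial point is that the elements $w$ with $\ell(sws) < \ell(w)$ (the ``descent'' type) pair up naturally, and the commutation relation for such a pair is where condition (ii) emerges. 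Once the coefficient matching is set up correctly, verifying that conditions (i) and (ii) are both necessary and sufficient is routine: necessity is immediate from coefficient comparison, and sufficiency follows because the two conditions exactly annihilate every coefficient of $T_s x - x T_s$ for all $s \in J$, hence $x$ commutes with all of $\ch_J$.
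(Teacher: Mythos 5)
Your overall strategy is exactly the paper's: reduce to checking $T_sx = xT_s$ for the generators $s \in J$, expand both sides, compare coefficients of a fixed basis element, and split into cases according to how multiplication by $s$ on the left and right interacts with lengths. The identification of the descent case with condition (ii) and of sufficiency as ``the two conditions annihilate every coefficient of $T_sx - xT_s$'' also matches the paper's proof.

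However, one step as you describe it would fail, and it is precisely the step carrying the only nontrivial combinatorial content. You write that the length-preserving case $w \approx_J w'$ (via a single $s$) ``corresponds to $\ell(sw) = \ell(ws) = \ell(w)$ after conjugation balances out.'' This configuration is impossible in a Coxeter group: multiplication by a generator always changes length by exactly $1$, so $\ell(sw) = \ell(w) \pm 1$ and never $\ell(sw)=\ell(w)$. The correct dichotomy, which the paper establishes via the exchange property, is: if $\ell(sws)=\ell(w)$ and $sws \neq w$, then one cannot have both $\ell(sw)<\ell(w)$ and $\ell(ws)<\ell(w)$ (nor both greater); exactly one of $sw$, $ws$ is longer than $w$ and the other is shorter. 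This skew configuration is what makes the coefficient comparison work for condition (i): assuming say $\ell(sw)>\ell(w)$, one compares the coefficient of $T_{sw}$ on both sides and gets $(T_sx)_{sw}=x_w+a_sx_{sw}$ versus $(xT_s)_{sw}=x_{sws}+a_sx_{sw}$, whence $x_w = x_{sws}$; without knowing which of $sw$, $ws$ ascends, you cannot set up this comparison. The remaining degenerate case $sws=w$ (i.e.\ $sw=ws$) must also be split off and treated separately (it is trivial, but it is a distinct case: there $\ell(sw)=\ell(ws)$ can be either $\ell(w)+1$ or $\ell(w)-1$). So the plan is salvageable and follows the paper's route, but as written the case analysis rests on a length configuration that cannot occur, and the exchange-property dichotomy that replaces it is a genuine missing ingredient rather than bookkeeping.
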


        \begin{proof}
            For $z\in \ch$ and $w\in W$, we use $(z)_w$ to denote the coefficient of the basis element $T_w$ in $z$, for example, $(x)_w=x_w$.

            We first show the only if part.
            \begin{enumerate}
                \item It suffices to show $x_w=x_{w'}$ for $w\xrightarrow{s} w'$ with $\ell(w')=\ell(w)$. We may assume $w'\neq w$.
                
                By the exchange property of Coxeter groups, we cannot have both $\ell(sw)<\ell(w)$ and $\ell(ws)<\ell(w)$. Indeed, $\ell(sw)<\ell(w)$ implies there must exist a reduced expression $w$ starting with $s$, say $w=s_0s_1s_2\cdots s_k$ with $s=s_0$. But then $\ell(ws)<\ell(w)$ implies $ws=s_0s_1\cdots \hat{s_j}\cdots s_k$ with $\hat{}$ denoting omission. If $j\neq 0$ then $\ell(sws)<\ell(w)$. If $j=0$, then $ws=sw$ and that further implies $w'=sws=ssw=w$.
                
                Hence either $\ell(sw)>\ell(w)=\ell(w')$ or $\ell(ws)>\ell(w)=\ell(w')$.

                In the first case, $(T_sx)_{sw}=x_w+a_sx_{sw}$ and $(xT_s)_{sw}=x_{sws}+a_sx_{sw}$. Hence $x_{w'}=x_w$. The second case is similar.

                \item In this case, we have $\ell(sws)<\ell(ws)=\ell(sw)<\ell(w)$ and $sw\approx_J ws$. Then $(T_sx)_{sw}=b_sx_w$ and $(xT_s)_{sw}=a_sx_{sw}+x_{sws}$. Hence $x_{w'}=b_sx_w-a_sx_{sw}$. 
            \end{enumerate}

            Now we show the if part, i.e., for $s\in J$, we want to show $T_sx=xT_s$ when $x_w$, $w\in W$ satisfy both conditions. For each $w\in W$, $s\in J$, there are two cases. If $|\ell(sws)-\ell(w)|=2$ or ($\ell(sws)=\ell(w)$ and $|\ell(ws)-\ell(sw)=2|$), we are in the situation of $(ii)$. By the proof in $(i)$ above, the case $\ell(w)=\ell(sws)>\ell(ws)=\ell(sw)$ is trivial. The only case left is $\ell(w)=\ell(sws)<\ell(ws)=\ell(sw)$. But then this implies $sws\approx_J w$, and $sw\approx_J ws$. We are in the situation of $(i)$. Hence $T_sx=xT_s$ for any $s\in J$.
        \end{proof}
    \begin{rmk}
        The proof does not assume $\{b_s\}_{s\in S}$ are invertible. 
    \end{rmk}
    \begin{rmk}
        Note that in (ii), we also have $x_{sw}=x_{ws}$ by (i).
    \end{rmk}
    \begin{rmk}
        It seems likely $(i)$ holds for a stronger condition for $w\sim_J w'$. Moreover, the reduction of $T_w$ inside the cocenter also follows relations similar to the class polynomials. This observed relationship can be formalized through the duality between the Hecke algebra $\ch$ and its completion $\widehat{\ch}$ (direct product of $W$-many basis elements). The partial cocenter with respect to the parabolic subalgebra $\ch_J$ inside $\ch$ is naturally dual to the centralizer $Z_{\widehat{\ch}}(\ch_J)$ of $\ch_J$ within the completion $\widehat{\ch}$. This generalizes the duality via the symmetrizing form discussed in \cite{geck1997centers}.
    \end{rmk}

    We aim to define a maximal version of the class polynomial to record the coefficients of all $T_w$ in the reduction given in \cref{prop: diamond}. For that, we need a foundation for reducing elements to their maximal lengths. 
    
    \subsection{Reduction theorem and variations}
        We want to state the reduction theorem and its variations, generalizing the result in the finite case \cite{geck1993irreducible} and the affine case \cite{he2014minimal}.

        Let $(W,S)$ be a Coxeter system. Let $\Aut(W,S)=\{\delta\in \Aut(W)\ |\ \delta(S)=S\}$. Let $\tilde{W}=W\rtimes \Aut(W,S)$. For $J\subset S$ and $\delta\in \Aut(W_J,J)$, we can define the (partial) $\delta$-twisted conjugation $x\cdot y=xy\delta(x)\i$ for $x\in W_J,y\in W$. We could also define $\delta$-twisted conjugacy class $\co_\delta$, $\delta$-cyclic shift $w\rightarrow_\delta w'$, $\delta$-strong conjugation $w\sim_\delta w'$ and their partial versions $w\rightarrow_{\delta,J} w'$, $w\sim_{\delta,J} w'$, see for example, \cite[\S 3.1]{he2007minimal}. We use the word ``untwisted'' for cases when $\delta=\id$.
        
        \begin{thm}[\cite{marquis2018cyclically, marquis2020structure}]\label{thm:mingeneral}
             Let $(W,S)$ be a Coxeter system and let $\delta\in \Aut(W,S)$. Let $\co_\delta$ be a $\delta$-twisted conjugacy class of $W$, and let $\co^{\min}_{\delta}$ be the set of minimal length elements of $\co_\delta$. Then the following assertions hold:
            \begin{enumerate}
                 \item For each $w \in \co_{\delta}$, there exists $w' \in \co^{\min}$ such that
            $w \rightarrow_{\d} w'$.
            
                \item Let $w, w' \in \co^{\min}_\delta$, then $w \sim_{\d} w'$.
            \end{enumerate}
        \end{thm}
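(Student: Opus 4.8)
The plan is to recover this statement geometrically from the Davis complex, treating both assertions through the single isometry $\phi:=w\delta$ of $X$ obtained from the action of $\tilde W=W\rtimes\langle\delta\rangle$. Since $\delta$ permutes the finite set $S$, the group $\langle\delta\rangle$ is finite and $\tilde W$ still acts properly and cocompactly on the complete CAT($0$) space $X$ by isometries; hence by the standard semisimplicity result \cite[II.6.10]{bridson2013metric} the isometry $\phi$ is semisimple, its displacement function $d_\phi\colon X\to\mathbb{R}$, $x\mapsto d(x,\phi x)$, is convex (as in \cref{prop:localmaximum}), and its minimal set $\mathrm{Min}(\phi)$ is a nonempty closed convex $\phi$-invariant subcomplex. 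The twisted dictionary is the expected one: writing $\pi^\delta_w(vC_0)=v\i w\delta(v)$ one has $d_{\Ch,\phi}(vC_0)=d_{\Ch}(vC_0,\phi(vC_0))=\ell(\pi^\delta_w(vC_0))$, so that minimal length in $\co_\delta$ corresponds to minimality of the chamber function $d_{\Ch,\phi}$, and for $s$-adjacent chambers with $d_{\Ch,\phi}(D)\le d_{\Ch,\phi}(D')$ one gets $\pi^\delta_w(D')\xrightarrow{s}\pi^\delta_w(D)$. Thus a $d_{\Ch,\phi}$-non-increasing gallery issuing from $C_0$ reads off exactly as a chain of $\delta$-cyclic shifts $w\rightarrow_\delta\cdots$. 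The key bridging lemma I would isolate is that $\pi^\delta_w(C)\in\co^{\min}_\delta$ precisely when the closed chamber $\overline C$ meets $\mathrm{Min}(\phi)$; the reflection example (for $\phi$ a single reflection the minimal chambers are those touching its wall, not those with barycenter on it) shows one must phrase this as incidence with $\mathrm{Min}(\phi)$ rather than containment.

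For assertion (1) I would argue by projection. Let $p$ be the projection of the barycenter $x_{C_0}$ onto the closed convex set $\mathrm{Min}(\phi)$, and consider the geodesic $[x_{C_0},p]$. By convexity of $d_\phi$, the function $d_\phi$ is non-increasing along this geodesic, and $[x_{C_0},p]$ is compact, so it crosses only finitely many walls and traverses finitely many chambers. If the geodesic crosses a wall $m$ from a chamber $D$ into a chamber $D'$, then $d_{\Ch,\phi}(D')>d_{\Ch,\phi}(D)$ would force $d_\phi$ to increase as one crosses into $D'$ by \cref{prop:chamber_to_metric}, contradicting monotonicity; hence $d_{\Ch,\phi}$ is non-increasing along the resulting gallery $(C_0=D_0,\dots,D_N)$. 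Reading this gallery as cyclic shifts gives $w\rightarrow_\delta w'$ with $w'=\pi^\delta_w(D_N)$, and since $p\in\mathrm{Min}(\phi)$ lies in $\overline{D_N}$, the bridging lemma yields $w'\in\co^{\min}_\delta$, as required.

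For assertion (2) I would identify two minimal representatives with chambers $C,C'$ whose closures meet $\mathrm{Min}(\phi)$, and connect them through the region of minimal chambers. Because $\mathrm{Min}(\phi)$ is convex and $\phi$-invariant, its structure is controlled: in the hyperbolic case it splits as a product $\mathbb{R}\times Y$ with $\phi$ translating the $\mathbb{R}$-factor, and in the elliptic case it is fixed setwise by $\phi$. Using this structure I would produce a gallery joining $C$ to $C'$ all of whose chambers are minimal (meet $\mathrm{Min}(\phi)$), and then translate each elementary wall-crossing into an elementary strong $\delta$-conjugation $w'=xw\delta(x)\i$ with $\ell(xw)=\ell(x)+\ell(w)$ (or the right-handed variant). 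Iterating these elementary steps yields $w\sim_\delta w'$.

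The main obstacle is assertion (2), and precisely the faithful passage from geometry to the combinatorial relation $\sim_\delta$. Two difficulties sit here. First, a geodesic path inside $\mathrm{Min}(\phi)$ connecting the two chambers need not, a priori, be realized by a gallery all of whose chambers are of minimal length; one must use the product/fixed structure of $\mathrm{Min}(\phi)$ to choose such a gallery. Second, and more delicate, each length-preserving step must be upgraded from the weak condition $\ell(w)=\ell(w')$ to the genuine no-cancellation condition $\ell(xw)=\ell(x)+\ell(w)$ defining elementary strong conjugacy — exactly the subtlety that makes the classical Geck--Pfeiffer treatment of this half nontrivial. Establishing the bridging lemma (minimal length $\iff$ incidence with $\mathrm{Min}(\phi)$) rigorously, via the comparison between wall-crossing counts and metric displacement already used in \cref{prop:chamber_to_metric}, is the essential combinatorial--metric input that makes both steps uniform across the finite, affine, and indefinite types.
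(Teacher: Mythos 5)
The paper does not actually prove this theorem: it is quoted from \cite{marquis2018cyclically, marquis2020structure}, and the only argument supplied is the remark explaining how the untwisted statement there is upgraded to the twisted one (via Remarks 6.9 and 3.6 of \cite{marquis2020structure}, i.e.\ by passing to $\tilde W=W\rtimes\Aut(W,S)$). So your attempt is really a reconstruction of the cited literature's proof, and your overall strategy --- CAT($0$) geometry of the Davis complex, convexity of displacement functions, semisimplicity of the isometry $w\delta$ --- is indeed the method used there and in \cite{marquis2024centre}. However, as written your argument has a genuine gap at its load-bearing step.

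The gap is your ``bridging lemma'': that $\pi^\delta_w(C)\in\co^{\min}_\delta$ precisely when the closed chamber $\overline C$ meets $\mathrm{Min}(\phi)$. This is false. Take $W=S_3=\langle s,t\rangle$, $\delta=\id$, $w=s$. Then $\mathrm{Min}(s)$ is the wall $m_s$, which passes through the barycenter of the hexagonal Coxeter cell $P_{\{s,t\}}$; every chamber of this Davis complex contains that barycenter in its closure, so \emph{every} $\overline C$ meets $\mathrm{Min}(s)$. Yet $\pi_s(tC_0)=tst$ has length $3$ and is not minimal in the class $\{s,t,sts\}$. (In this example the chambers carrying minimal conjugates are exactly those sharing a \emph{panel} with $m_s$ --- a strictly stronger incidence condition --- so your caveat ``incidence rather than containment'' does not rescue the statement; the correct geometric characterization of minimality is genuinely more delicate, which is much of the content of Marquis's papers.) Since your proof of assertion (1) terminates by applying exactly this lemma to the chamber containing the projection point $p$, assertion (1) is not established; note also that \cref{prop:chamber_to_metric} concerns the special segment $[z,y]$ with $y=r_m(x)$ a mirror image, so it does not directly control an arbitrary geodesic crossing a wall as you use it. Finally, your treatment of assertion (2) is an outline rather than a proof: you flag, but do not resolve, both the existence of a gallery of minimal chambers inside $\mathrm{Min}(\phi)$ and the upgrade from mere length preservation to the no-cancellation condition $\ell(xw)=\ell(x)+\ell(w)$ defining elementary strong conjugation --- and that upgrade is precisely the hard combinatorial core of the Geck--Pfeiffer/He--Nie/Marquis theorem, which soft convexity alone does not supply.
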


        \begin{rmk}
            The statement in \cite{marquis2018cyclically} is proved for untwisted conjugacy classes in $W$. In $(ii)$, they use tight conjugation, which is a special type of strong conjugation. By \cite[Remark 6.9]{marquis2020structure}, the untwisted statements can be upgraded to work for $\tilde{W}$. By \cite[Remark 3.6]{marquis2020structure}, the statement can be reduced to a twisted statement in $W$ as above.
        \end{rmk}

        The above result can be made into the partial conjugation version via the following trick.
        
        \begin{cor}Let $(W,S)$ be a Coxeter system and $J\subset S$. Let $\delta\in \Aut(W_J,J)$. Let $\co_{\delta,J}$ be a $\delta$-twisted $W_J$-conjugacy class of $W$, and let $\co^{\min}_{\delta,J}$ be the set of minimal length elements of $\co_{\delta,J}$. Then the following assertions hold:
            \begin{enumerate}
                 \item For each $w \in \co_{\delta,J}$, there exists $w' \in \co_{\delta,J}^{\min}$ such that
            $w \rightarrow_{\d,J} w'$.
            
                \item Let $w, w' \in \co^{\min}_{\delta,J}$, then $w \sim_{\d,J} w'$.
            \end{enumerate}
        \end{cor}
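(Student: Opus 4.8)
The plan is to reduce this partial, $\delta$-twisted statement to the non-partial (but still twisted) reduction theorem \cref{thm:mingeneral}, applied not to $(W,S)$ itself but to a parabolic subsystem $(W_K,K)$, using the orbit decomposition of \cref{thm:partialconjdecom}. The point is that the genuinely new content of $\co_{\delta,J}$ is concentrated in a single twisted conjugacy class inside a parabolic, and the extra structure is only the combinatorics of the coset space $W_J/W_K$.

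First I would record the $\delta$-twisted analogue of \cref{thm:partialconjdecom}: each $\delta$-twisted $W_J$-class $\co_{\delta,J}$ meets a set of the form $v\mathcal{C}$, where $v\in{}^JW$ is determined by the class, $K=K_v\subseteq J$, and $\mathcal{C}$ is a \emph{single} $\sigma$-twisted $W_K$-conjugacy class for the automorphism $\sigma=\Ad(v)\circ\delta\in\Aut(W_K,K)$ (here $K_v$ is defined by the twisted stabilizer condition so that $\sigma$ does preserve $(W_K,K)$). The derivation is the verbatim computation of \cref{thm:partialconjdecom}(ii) with $\delta$ carried along, giving $\co_{\delta,J}=W_J\times^{W_K}v\mathcal{C}$ and $\sharp\co_{\delta,J}=\sharp(W_J/W_K)\cdot\sharp\mathcal{C}$ by \cref{cor:counting}. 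Since $v\in{}^JW$ and $x\in W_K\subseteq W_J$, length is additive, $\ell(vx)=\ell(v)+\ell(x)$, and $v$ is the unique minimal representative of its coset; hence the minimal-length elements of the whole class are exactly the $vx$ with $x\in\mathcal{C}^{\min}_\sigma$, i.e. $\co^{\min}_{\delta,J}=v\,\mathcal{C}^{\min}_\sigma$.

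The crux is then a dictionary translating elementary moves across the correspondence $vx\leftrightarrow x$. On the internal side, I would verify that a $\sigma$-twisted cyclic shift $x\rightarrow_\sigma x'$ with generator $k\in K$ lifts to a $\delta$-twisted $J$-cyclic shift $vx\xrightarrow{\delta,\,\cdot}vx'$, and that a $\sigma$-twisted strong $K$-conjugation lifts to a strong $J$-conjugation; because of the additivity $\ell(vx)=\ell(v)+\ell(x)$, every length comparison in $W_K$ is read off directly from the corresponding comparison in $W$, so this half is essentially formal once $\sigma=\Ad(v)\circ\delta$ is correctly identified. On the external side I must show that an arbitrary $u\in\co_{\delta,J}$, written $u=w_J\cdot_\delta(vx)$ with $w_J\in W_J^K$, can be $J$-cyclically shifted (length non-increasing, generators in $J$) down to the prefix-normalized form $vx'$; this is where the $W_J/W_K$ coset combinatorics does real work, since it amounts to showing that a nontrivial prefix can always be shrunk by conjugating with a single generator of $J$ without increasing length.

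Finally I would invoke \cref{thm:mingeneral} for the Coxeter system $(W_K,K)$ with the automorphism $\sigma$: part (i) gives, for each $x\in\mathcal{C}$, a chain $x\rightarrow_\sigma x'$ with $x'\in\mathcal{C}^{\min}_\sigma$, and part (ii) gives $x'\sim_\sigma x''$ for any two minimal elements. Transporting these chains through the dictionary of the previous step — and, for assertion (i), precomposing with the prefix-normalization of $u$ to the form $vx$ — yields exactly (i) and (ii) of the corollary. The main obstacle is the external half of the dictionary, namely guaranteeing that the coset-prefix can always be reduced by cyclic shifts staying inside $W_J$; the lift of the moves internal to $W_K$, and the identification of $\co^{\min}_{\delta,J}$ with $v\,\mathcal{C}^{\min}_\sigma$, are routine consequences of length additivity once the twisted decomposition is in place.
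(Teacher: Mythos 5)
Your route is genuinely different from the paper's. The paper proves this corollary in one short paragraph by adjoining a new generator $s_{J,\infty}$ that commutes with every element of $J$ and has infinite order product with every element of $S\setminus J$, embedding $W\hookrightarrow\hat{W}$ via $w\mapsto \hat{w}:=ws_{J,\infty}$, observing that $\hat{w}\rightarrow_{\delta}\hat{w}'$ (resp.\ $\hat{w}\sim_{\delta}\hat{w}'$) in $\hat{W}$ is equivalent to $w\rightarrow_{\delta,J}w'$ (resp.\ $w\sim_{\delta,J}w'$) in $W$, and then quoting \cref{thm:mingeneral} for $\hat{W}$; no orbit decomposition or coset combinatorics enters at all. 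Your plan --- decompose via a twisted analogue of \cref{thm:partialconjdecom}, reduce to a single $\sigma$-twisted class in $(W_{K_v},K_v)$, and apply \cref{thm:mingeneral} there --- is the older, He-style route, and its \emph{internal} half (lifting $\sigma$-twisted moves in $W_{K_v}$ to $\delta$-twisted $J$-moves on $vW_{K_v}$, using $\ell(vx)=\ell(v)+\ell(x)$ and the fact that conjugation by $v$ permutes the simple generators in $K_v$) is indeed routine, as you say.

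However, there is a genuine gap, and it sits exactly where you flag ``the main obstacle'': you never prove, nor cite anything that implies, the statement that an arbitrary $u\in\co_{\delta,J}$ can be brought by length-non-increasing $J$-conjugations into $vW_{K_v}$. \cref{thm:partialconjdecom} is purely set-theoretic --- it identifies the orbit as $W_J\times^{W_{K_v}}v\mathcal{C}$ but carries no length control whatsoever --- so this reduction is not ``coset combinatorics doing real work''; it is a nontrivial theorem in its own right (essentially the main inductive result of \cite{he2007minimal}, not a corollary of the decomposition), and without it neither (i) nor (ii) follows. A second, related error: your claim that $\co^{\min}_{\delta,J}=v\,\mathcal{C}^{\min}_{\sigma}$ is false as a set equality. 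In the infinite dihedral group $W=\langle s,t\rangle$ with $J=\{s\}$, $\delta=\mathrm{id}$, $v=ts$, one has $K_v=\emptyset$, the $W_J$-class of $v$ is $\{ts,st\}$ with both elements minimal, yet $vW_{K_v}=\{ts\}$. Only the inclusion $v\,\mathcal{C}^{\min}_{\sigma}\subseteq\co^{\min}_{\delta,J}$ holds, and only once the external reduction is established. This is not cosmetic: for assertion (ii), a minimal-length element need not have the form $vx$, so before invoking \cref{thm:mingeneral}(ii) in $(W_{K_v},K_v)$ you must first normalize each minimal element by the external dictionary (those shifts are then length-preserving, giving $\approx_{\delta,J}$ and hence $\sim_{\delta,J}$) --- a precomposition step your outline includes only for assertion (i).
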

        \begin{proof}
            Define a new Coxeter group $(\hat{W}, S\sqcup\{s_{J,\infty}\})$ with $s_{J,\infty} j=js_{J,\infty}$ for all $j\in J$ and $s_{J,\infty} s$ having infinite order for $s\in S\setminus J$. Consider the embedding $\hat{}: W\to \hat{W}$, $w\mapsto \hat{w}:=ws_{J,\infty}$. Then $\hat{w}\rightarrow_\d \hat{w'}$ in $\hat{W}$ is equivalent to $w \rightarrow_{\d,J} w'$, and $\hat{w}\sim_\d \hat{w'}$ in $\hat{W}$ is equivalent to $w \sim_{\d,J} w'$. Therefore, applying \cref{thm:mingeneral} to $\hat{W}$ will give the desired result.
        \end{proof}

    We can also have the maximal length element version of the above corollary. Before doing this, we need a variant of the relation $\sim$. (Notice the difference between $\sim$ and $\backsim$.)
    \label{defmaxequ}
    
    Let \( w, w' \in W \) be such that \( \ell(w) = \ell(w') \). We shall write \( w \overset{x}{\backsim} w' \) if \( w' = xwx\i \) for some \( x \in W_J \) and if \( \ell(xw) = \ell(w) - \ell(x) \) or \( \ell(wx\i) = \ell(w) - \ell(x) \). 
    Again, we also write \( w \backsim_J w' \) if there exist sequences of elements \( x_i\in W_J, w_i \in W \) (\( 1 \leq i \leq n \)) such that \[ w \overset{x_1}{\backsim} w_1 \overset{x_2}{\backsim} \dots \overset{x_n}{\backsim} w_n = w'. \]

    Now we want to prove the maximal length theorem. We need two lemmas. We will use the following setting.
    
    Assume $J\subset S$ to be spherical, and $w'\in W$. Then the double closet $W_Jw'W_J$ contains a unique maximal element, denoted as $w$. Then any element $u\in W_Jw'W_J$ is of the form $u=w_1ww_2$, $w_1,w_2\in W_J$ with $\ell(u)=\ell(w)-\ell(w_1)-\ell(w_2)$. For a finite conjugacy class $\co_J$, denote the set of maximal length elements as $\co_J^{\max}$.
    \begin{lem}\label{lem:maxdoublecoset}
        Let $u=w_1ww_2\in W_JwW_J$, and $v=ww_2w_1\in wW_J$. Denote the conjugacy class of $u,v$ by $\co_J$. Then $u\in \co_J^{\max}$ if and only if $v\in \co_J^{\max}$ and $\ell(w_2w_1)=\ell(w_2)+\ell(w_1)$.

        In fact, if we write $w_1=s_1s_2\cdots s_k$, then $u\in \co_J^{\max}$ if and only if \\ $s_{t+1}\cdots s_{k-1}s_kww_2s_1s_2\cdots s_t\in \co_J^{\max}$ and is reduced for all $t$.
    \end{lem}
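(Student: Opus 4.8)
The plan is to reduce everything to the length behaviour of the maximal element $w$ of the double coset $W_JwW_J$. Since $w$ has maximal length in its double coset, $\ell(ws)<\ell(w)$ and $\ell(sw)<\ell(w)$ for every $s\in J$; this puts $J$ in both descent sets of $w$ and yields the two identities $\ell(wg)=\ell(w)-\ell(g)$ and $\ell(gw)=\ell(w)-\ell(g)$ for \emph{every} $g\in W_J$. I will combine these with the trivial bound $\ell(ax)\ge \ell(x)-\ell(a)$ (valid for all $a,x\in W$, since $x=a\i(ax)$) and with the defining property of $\co_J^{\max}$, namely that every element of $\co_J$ has length at most the maximal length $M:=\max_{\co_J}\ell$.

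For the first statement I would first note that $v=ww_2w_1=w_1\i(w_1ww_2)w_1=w_1\i u\,w_1$, so $u$ and $v$ lie in the same class $\co_J$. The hypothesis gives $\ell(u)=\ell(w)-\ell(w_1)-\ell(w_2)$, while the right identity above applied to $g=w_2w_1\in W_J$ gives $\ell(v)=\ell(w)-\ell(w_2w_1)$. As $\ell(w_2w_1)\le \ell(w_2)+\ell(w_1)$ always, this yields $\ell(v)\ge \ell(u)$, with equality exactly when $\ell(w_2w_1)=\ell(w_1)+\ell(w_2)$. The equivalence then follows by comparing with $M$: if $u\in\co_J^{\max}$ then $M=\ell(u)\le \ell(v)\le M$ forces $\ell(v)=M$ and $\ell(w_2w_1)=\ell(w_1)+\ell(w_2)$; conversely these two conditions give $\ell(u)=\ell(v)=M$, i.e. $u\in\co_J^{\max}$.

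For the refined statement I would fix a reduced word $w_1=s_1\cdots s_k$ and set $a_t:=s_{t+1}\cdots s_k$, $c_t:=w_2 s_1\cdots s_t\in W_J$, and $u_t:=a_t\,w\,c_t$, so that $u_0=u$ and $u_k=v$; a direct check gives $u_t=s_tu_{t-1}s_t$, whence all $u_t\in\co_J$. The ``if'' direction is immediate, since the case $t=0$ already asserts $u=u_0\in\co_J^{\max}$, so the real content is ``only if''. Assuming $u\in\co_J^{\max}$, the first statement gives $\ell(w_2w_1)=\ell(w_1)+\ell(w_2)$, so the word $w_2s_1\cdots s_k$ is reduced and each prefix satisfies $\ell(c_t)=\ell(w_2)+t$, while the suffix $a_t$ of $w_1$ is reduced with $\ell(a_t)=k-t$. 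Applying $\ell(wc_t)=\ell(w)-\ell(c_t)$ and then $\ell(a_t\cdot wc_t)\ge \ell(wc_t)-\ell(a_t)$ gives $\ell(u_t)\ge \ell(w)-\ell(c_t)-\ell(a_t)=\ell(w)-\ell(w_1)-\ell(w_2)=\ell(u)=M$; since $u_t\in\co_J$ also forces $\ell(u_t)\le M$, I conclude $\ell(u_t)=M$, so $u_t\in\co_J^{\max}$ and moreover $\ell(u_t)=\ell(w)-\ell(a_t)-\ell(c_t)$, i.e. the displayed factorization is reduced.

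The one delicate point, which I expect to be the main obstacle, is the meaning of ``reduced'': not every factorization $a\,w\,c$ with $a,c\in W_J$ is length-additive (e.g. $s_1 w_{0,J}s_2=w_{0,J}$ in the rank-two case), so one cannot simply assert $\ell(u_t)=\ell(w)-\ell(a_t)-\ell(c_t)$ for free. The argument above sidesteps this by establishing the two-sided length estimate — the lower bound from the descent identity together with the triangle bound, and the upper bound from maximality within $\co_J$ — which simultaneously certifies that each $u_t$ attains the maximal length and that its factorization is reduced. Everything else is routine bookkeeping with the two descent identities.
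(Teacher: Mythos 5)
Your proof is correct and follows essentially the same route as the paper: compare $\ell(u)=\ell(w)-\ell(w_1)-\ell(w_2)$ with $\ell(v)=\ell(w)-\ell(w_2w_1)$ using the descent identities of the maximal element $w$ and subadditivity of $\ell$, then sandwich against the maximal length in $\co_J$. Your treatment of the ``in fact'' part (the chain $u_t=s_tu_{t-1}s_t$ with the two-sided length bound certifying both maximality and reducedness) is a careful elaboration of what the paper compresses into ``the same proof works,'' so it is, if anything, more complete than the original.
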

    \begin{proof}
        As $\ell(v)=\ell(w)-\ell(w_2w_1)$, and $\ell(w_2w_1)\leq \ell(w_2)+\ell(w_1)$. Hence $\ell(u)\leq \ell(v)$ with equality if and only if $\ell(w_2w_1)=\ell(w_2)+\ell(w_1)$. The same proof works for the in fact part.
    \end{proof}

    \begin{lem} Let $K:=\{s\in J\ |\ wsw\i\in W_J \}$. Then \label{lem:maxWK}
    \begin{enumerate}
        \item $K=\{s\in J\ |\ wsw\i\in J \}$;
        \item $W_K=\{x\in W_J\ |\ wxw\i\in W_J \}$
    \end{enumerate}
    Hence, $\d:=\Ad(w):(W_K,K)\to (W_{\d(K)},\d(K))$ is an isomorphism of Coxeter systems.
    \end{lem}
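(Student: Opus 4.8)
The plan is to exploit that $w$, being the longest element of the double coset $W_JwW_J$ with $J$ spherical, has $J$ contained in both its descent sets. If $\ell(sw)>\ell(w)$ for some $s\in J$ then $sw\in W_JwW_J$ would be longer, contradicting maximality; so $\ell(sw)<\ell(w)$ and $\ell(ws)<\ell(w)$ for every $s\in J$. Writing $w_J$ for the longest element of the finite group $W_J$ and using the standard fact $\ell(yz)=\ell(y)+\ell(z)$ for $y\in W_J,z\in{}^JW$ (and its right-sided analogue), this upgrades to a factorization $w=w_J v=u\,w_J$ with $v\in{}^JW$, $u\in W^J$ and lengths adding: $\ell(w)=\ell(w_J)+\ell(v)=\ell(u)+\ell(w_J)$. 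These two factorizations yield the identities $\ell(wx)=\ell(w)-\ell(x)$ and $\ell(xw)=\ell(w)-\ell(x)$ for all $x\in W_J$, which drive everything.

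For (i) and the length-preservation of $\d=\Ad(w)$ I would argue directly from these identities. Fix $s\in K$ and set $y:=wsw\i\in W_J$. From $yw=ws$ we get $\ell(w)-\ell(y)=\ell(yw)=\ell(ws)=\ell(w)-1$, so $\ell(y)=1$, i.e.\ $y\in J$; this is exactly (i). The same computation applied to an arbitrary $x\in G:=\{x\in W_J\mid wxw\i\in W_J\}$ gives $\ell(\d(x))=\ell(x)$, so $\d$ restricts to a length-preserving group isomorphism $G\to wGw\i$.

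The substance of the lemma is (ii), namely $G=W_K$. Since $K=G\cap J$ and $G$ is a subgroup of $W_J$, the inclusion $W_K\subseteq G$ is immediate. For the reverse inclusion I would first simplify $G$: because $w=w_Jv$ and $w_JW_Jw_J=W_J$, one has $w\i W_J w=v\i W_J v$, hence
\[
G=W_J\cap w\i W_J w=W_J\cap v\i W_J v,\qquad v\in{}^JW.
\]
This exhibits $G$ as the intersection of $W_J$ with a conjugate of $W_J$, both spherical parabolic subgroups of $W$, and I would identify this intersection with a standard parabolic of $W_J$: either via Kilmoyer's structure theorem for minimal double-coset representatives (after writing $v=db$ with $d=\min W_JwW_J\in{}^JW^J$ and $b\in W_J$), or geometrically via Steinberg's fixed-point theorem, which says the pointwise stabiliser in $W$ of $V^{W_J}+v\i V^{W_J}$ is generated by the reflections it contains and is parabolic. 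Either route shows $G$ is a parabolic subgroup of $W_J$ containing $W_K$; since $G\cap J=K$, a rank comparison (a parabolic cannot strictly contain a parabolic of equal rank) then forces $G=W_K$.

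The main obstacle is precisely this last identification: a priori $G$ could contain an element all of whose descents lie outside $K$, and ruling this out is impossible by the local length bookkeeping alone, as one checks that the relations $\ell(vx)=\ell(v)+\ell(x)$ for $x\in G$ do not by themselves produce a descent into $K$. One genuinely needs the global structure theory of spherical double cosets (Kilmoyer--Howlett) or the fixed-space description (Steinberg). Once $G=W_K$ is in hand the final assertion is immediate: by (i), $\d(K)\subseteq J$, and $\d$ carries the Coxeter generators $K$ bijectively onto the Coxeter generators $\d(K)$ of the standard parabolic $W_{\d(K)}\subseteq W_J$; being a group isomorphism it sends each product $st$ to $\d(st)=\d(s)\d(t)$ of equal order, so $\d\colon(W_K,K)\to(W_{\d(K)},\d(K))$ is an isomorphism of Coxeter systems.
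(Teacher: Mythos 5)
Your proof of part (i), of the length-preservation of $\Ad(w)$ on $G:=\{x\in W_J\mid wxw^{-1}\in W_J\}$, and of the final passage from (ii) to the isomorphism of Coxeter systems are all correct, and for (i) your argument coincides with the paper's: maximality of $w$ gives $\ell(yw)=\ell(w)-\ell(y)$ for $y\in W_J$, and $yw=ws$ then forces $\ell(y)=1$.

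The gap is the concluding step of (ii). Writing $v=db$ with $d\in{}^JW^J$, $b\in W_J$ and $K_0=J\cap d^{-1}Jd$, Kilmoyer gives $G=b^{-1}W_{K_0}b$, and Steinberg gives that $G$ is a parabolic subgroup generated by the reflections it contains; but neither gives that $G$ is a \emph{standard} parabolic of $W_J$, and the rank comparison cannot substitute for this. That comparison needs $\mathrm{rank}(G)=|K|$, whereas $G\cap J=K$ only tells you $|K|\le\mathrm{rank}(G)$: a non-standard parabolic can meet $J$ in fewer elements than its rank. Concretely, in $W=S_4$ with $J=\{s_1,s_2\}$ and $v=s_3s_2\in{}^JW$, one computes $W_J\cap v^{-1}W_Jv=\{e,s_1s_2s_1\}=s_1W_{\{s_2\}}s_1^{-1}$, a rank-one parabolic of $W_J$ meeting $J$ in $\emptyset$. (This $v$ is of course not the ${}^JW$-part of a maximal double-coset element --- $w_{0,J}v$ has length $5$, not $6$ --- which is exactly the point: the maximality of $w$ must be invoked once more, beyond the two length identities and beyond $v\in{}^JW$, and this is precisely the ``element of $G$ with all descents outside $K$'' scenario you worried about but then did not actually exclude.) To close the gap along your Kilmoyer route: by the Howlett normal form, every element of $W_JdW_J$ is uniquely $u\,d\,c$ with $u\in W_J$ and $c$ a minimal length representative of $W_{K_0}\backslash W_J$, with additive lengths; maximality of $w$ therefore forces $b$ to be the longest such representative, namely $b=w_{0,K_0}w_{0,J}$, where $w_{0,L}$ denotes the longest element of $W_L$. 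Hence
\[
G=b^{-1}W_{K_0}b=w_{0,J}W_{K_0}w_{0,J}=W_{w_{0,J}K_0w_{0,J}},
\]
which is standard because conjugation by $w_{0,J}$ permutes $J$; then $K=w_{0,J}K_0w_{0,J}$ and $G=W_K$. For comparison, the paper itself dispatches (ii) in one sentence (``simple reflections in $W_K$ are necessarily contained in $K$''), implicitly relying on this standardness; your instinct that the entire difficulty of the lemma is concentrated at this point is sound, but the rank argument you propose does not resolve it.
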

    \begin{proof}
        \begin{enumerate}
            \item Let $v=wsw\i\in W_J$. Then $\ell(vw)=\ell(ws)=\ell(w)-1$. Then $\ell(v)=1$.
            \item Follows from $(i)$ as simple reflections in $W_K$ is necessarily contained in $K$.
        \end{enumerate}
    \end{proof}

    \begin{cor}\label{cor:maximalthm}
    Let $(W,S)$ be a Coxeter system and $J\subset S$. Let $\co_{J}$ be a $W_J$-conjugacy class of $W$. Then the following assertions hold:
            \begin{enumerate}
                 \item If $\co_{J}$ is finite, then for each $u \in \co_{J}$, there exist a finite chain $u_k\xrightarrow{s_k}u_{k-1}\xrightarrow{\dots}u_1\xrightarrow{s_1}u$, where $u_k \in \co^{\max}_J$ and all $s_j\in J$.
                 \item If $\co_{J}$ is infinite, then for each $u \in \co_{J}$, there exist an infinite chain of distinct elements $\xrightarrow{\dots}u_2\xrightarrow{s_2}u_1\xrightarrow{s_1}u$, where all $s_j\in J$.
            
                \item For any $u, u' \in \co^{\max}_{J}$, we have $u \backsim_{J} u'$.
            \end{enumerate}
        \end{cor}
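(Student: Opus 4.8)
The plan is to handle the three assertions separately. The dichotomy between (i) and (ii) is governed entirely by \cref{thm:infU}: since $U^+_J(u)\subseteq W_J\cdot u$ and $w\xrightarrow{J}u$ forces $\ell(w)\ge\ell(u)$, the set $U^+_J(u)$ is finite exactly when $\co_J$ is finite. For (i) and (iii) I would then invoke the classification \cref{thm:classification}: with $J$ irreducible and $W$ infinite, a finite $\co_J$ falls into exactly one of the cases $u\in W_{J^\perp}$, $J$ affine with $u$ a translation times an element of $W_{J^\perp}$, or $J$ spherical; when $W$ is finite we are automatically in the spherical case. The first case is trivial ($\co_J=\{u\}$), and the affine case reduces to the finite Weyl group $W_0$ through $w_0 t^\mu w_0\i=t^{w_0\mu}$, so the essential work is the spherical case, where the double-coset lemmas \cref{lem:maxdoublecoset} and \cref{lem:maxWK} apply.

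For (ii), assume $\co_J$ infinite, so $U^+_J(u)$ is infinite by \cref{thm:infU}. Reversing the defining chain of any $w\in U^+_J(u)$ exhibits $w$ as the endpoint of an \emph{up-chain} $u=v_0,v_1,\dots,v_n=w$ with $v_i\xrightarrow{s_i}v_{i-1}$ and $\ell(v_{i-1})\le\ell(v_i)$, all $s_i\in J$. Since lengths are non-decreasing along an up-chain, any repetition of a vertex encloses a flat cycle that can be excised without changing the endpoint; hence every element of $U^+_J(u)$ is the endpoint of a \emph{simple} up-chain. I would organize the simple up-chains from $u$ into a rooted tree, which is locally finite (branching bounded by $\sharp J$) and has infinitely many nodes (its endpoints already exhaust the infinite set $U^+_J(u)$). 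König's lemma then supplies an infinite branch, i.e.\ the desired infinite chain $\cdots\xrightarrow{s_2}u_1\xrightarrow{s_1}u$ of distinct elements.

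For (i), assume $\co_J$ finite, so $U^+_J(u)$ is finite. Choose $u_k$ of maximal length in $U^+_J(u)$; then $u_k\xrightarrow{J}u$ produces the chain, and $u_k$ is a cyclic-shift local maximum, since any $s\in J$ with $\ell(su_ks)>\ell(u_k)$ would give $su_ks\in U^+_J(u)$ of greater length. It remains to see that such a local maximum already has globally maximal length in $\co_J$. In the affine case this is immediate because $\co_J$ is a single translation orbit on which the length function is constant (consistent with \cref{prop:localmaximum}), so $\co^{\max}_J=\co_J$. In the spherical case I would prove the local-to-global statement as the upward-read analogue of \cref{thm:mingeneral}(i): run the same induction on $\ell(u)$, using the unique maximal element of each double coset $W_JwW_J$ and \cref{lem:maxdoublecoset} to guarantee that a length-increasing shift exists whenever $u$ is not already of maximal length.

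For (iii), let $u,u'\in\co^{\max}_J$; the singleton and affine cases are as above (in the affine case all elements are maximal and the minimal-length theorem applied inside $W_0$ links them), so I focus on $J$ spherical, which I expect to be the main obstacle. The idea is to work in the double coset $W_JwW_J$ with maximal element $w$: by \cref{lem:maxdoublecoset} every maximal-length conjugate is, up to a length-preserving rotation $w_1ww_2\mapsto ww_2w_1$, of the form $ww_2\in wW_J$, and by \cref{lem:maxWK} the residual conjugation is the $\delta$-twisted conjugation on $W_K$ with $\delta=\Ad(w)$, exactly the correspondence of \cref{thm:partialconjdecom}. Applying the partial, $\delta$-twisted version of \cref{thm:mingeneral} to $W_K$ shows that the associated \emph{minimal} representatives are strongly $\delta$-conjugate. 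The crux is to transport this $\sim$-equivalence across the double-coset correspondence so that the length-\emph{increasing} elementary moves defining $\sim$ dualize to the length-\emph{decreasing} elementary moves defining $\backsim_J$ on the maximal side, each step staying inside $\co^{\max}_J$. Checking that every elementary strong-conjugacy step dualizes to a single $\backsim$-step—reduced and length-preserving, as guaranteed by the refined form of \cref{lem:maxdoublecoset}—is the delicate bookkeeping that constitutes the principal difficulty.
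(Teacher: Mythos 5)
Your proposal follows essentially the same route as the paper: part (ii) from \cref{thm:infU}, and parts (i) and (iii) by splitting into the spherical/affine cases of \cref{thm:classification}, rotating into the coset $wW_J$ below the longest double-coset element via \cref{lem:maxdoublecoset}, and using \cref{lem:maxWK} to identify the residual action with $\Ad(w)$-twisted $W_K$-conjugation so that the partial twisted minimal-length theorem dualizes (length-increasing $\sim$-moves becoming length-decreasing $\backsim_J$-moves) into the maximal-length statements. This is precisely the paper's argument—the $\sim$-to-$\backsim_J$ transport you flag as the crux is handled there by \cref{lem:maxdoublecoset}—and your K\"onig's-lemma extraction of an infinite chain of distinct elements in (ii) is a detail the paper leaves implicit.
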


    \begin{proof}
        For infinite $\co_J$, \cref{thm:infU} implies $(ii)$.
        
        According to the classification \cref{thm:classification}, either $J\subset S$ is spherical or affine.
        Let $J$ be spherical and $u\in W$. Write $w$ for the unique longest element in $W_JuW_J$. Then $u=u_1wu_2$, for $u_1,u_2\in W_J$. We have $wu_2u_1\rightarrow_J u$ as $\ell(wu_2u_1)=\ell(w)-\ell(u_2u_1)\geq \ell(w)-\ell(u_2)-\ell(u_1)=\ell(u)$. We may thus assume $u=wy\in wW_J$. By \cref{lem:maxdoublecoset}, we may assume $u'\in\co^{\max}_J$ are of the form $u'=wy'\in wW_J\cap \co^{\max}_J$. Now for $x\in W_J$, $$wy=xwy'x\i \iff y=(w\i xw)y'w(w\i xw)\i w\i=zy'(\Ad(w)\cdot z)\i,$$
        where $z:=w\i xw$. By \cref{lem:maxWK}, $z\in W_K$ and if we define $W_K$-action on $W_J$ by $z\cdot_{\Ad(w)}y =zy(\Ad(w)\cdot z)\i$, called $\Ad(w)$-twisted conjugation, then we have $wy$ is $W_J$-conjugate to $wy'$ if and only if $y$ is $\Ad(w)$-twisted $W_K$-conjugate to $y'$. Thus, $wy'\rightarrow_J wy\iff y\rightarrow_{K,\Ad(w)} y'$. Similarly $wy'\backsim_J wy\iff y\sim_{K,\Ad(w)} y'$ by \cref{lem:maxdoublecoset}.

        Let $J\subset S$ be affine. By the proof of \cref{thm:classification}, every conjugate of a translation element is of the same length. The claim holds trivially.
    \end{proof}
    \subsection{A variation of class polynomial}
        Now we define a variation of the class polynomial $f_{w,\co_J}$ in \cite[\S 8.2]{geck2000characters}. Let $w\in W$, and $\co_J$ be a finite $W_J$-conjugacy class. According to \cref{thm:classification}, there are three cases. For $w\in W_J$, $w_1\in W_{J^\perp}$, define $f^{\max}_{ww_1,\co_J}=f^{\max}_{w,\co_J}$. If $J$ is affine, then $f^{\max}_{w,\co_J}=1$ if $w\in \co_J$ and $f^{\max}_{w,\co_J}=0$ otherwise. If $J$ is spherical, we define $f^{\max}_{w,\co_J}$ inductively as follows.
        If $w$ is a maximal length element, then $f^{\max}_{w,\co_J}=1$ for $w\in \co_J$ and $f^{\max}_{w,\co_J}=0$ otherwise.  For a smaller length $w$ we inductively define 
        \begin{equation} \label{eq:fmax-recursion}
            f^{\max}_{w',\co_J}
              = \begin{cases}
                  f^{\max}_{w,\co_J}, & \text{if } w \sim_J w', \\[6pt]
                  b_s\,f^{\max}_{w,\co_J} \;+\; a_s\,f^{\max}_{s w,\co_J}, & \text{if } w' \xrightarrow{s} w\text{ and } \ell(w')>\ell(w), s\in J.
                \end{cases}
        \end{equation}
        We set $f^{\max}_{w,\co_J}=0$ for all other cases.\\

        Note that the inductive formula for $f^{\max}_{w,\co_J}$ is the same as that for the class polynomial $f_{w,\co_J}$ in \cref{eq:fmin-recursion}, except we change $\sim$ to $\backsim$. The main difference of $f^{\max}_{w,\co_J}$ and $f_{w,\co_J}$ is the initial value. Here, we used the assumption that $b_s$ is invertible.

        Note also that $\approx_J$ implies $\backsim_J$, so in particular, $f^{\max}_{s w,\co_J}=f^{\max}_{ws,\co_J}$ in the case $w' \xrightarrow{s} w$, $\ell(w')>\ell(w)$.

        With a slight abuse of terminology, we also call $f^{\max}_{w,\co_J}$ the (max version of) \emph{class polynomial}. This polynomial shares similar properties to those in \cite[\S 8.2]{geck2000characters}.

        Apriori, the class polynomial $f^{\max}_{w,\co_J}$ depends on each $s$ used to conjugate $w$ to its maximal length. But we shall see that this would not happen.

        \begin{lem}\label{lem:sameimg}
            If $w\backsim_J w'$, then $T_w\equiv T_{w'} \mod [\ch_J,\ch]$.
        \end{lem}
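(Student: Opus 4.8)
The plan is to reduce immediately to a single elementary step $w \overset{x}{\backsim} w'$ and prove $T_w \equiv T_{w'} \pmod{[\ch_J,\ch]}$; since congruence modulo $[\ch_J,\ch]$ is transitive, chaining such steps along the definition of $\backsim_J$ then yields the full statement. The argument parallels the proof of the earlier $\sim$-version of \cref{lem:sameimg}, but must be adapted to the \emph{subtractive} length conditions $\ell(xw)=\ell(w)-\ell(x)$ and $\ell(wx\i)=\ell(w)-\ell(x)$.

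The first step is to record a general principle: for any invertible $g\in\ch_J$ and any $h\in\ch$ one has $g\,h\,g\i-h\in[\ch_J,\ch]$, because $g\,h\,g\i-h=g(hg\i)-(hg\i)g$ is literally a commutator with $g\in\ch_J$ and $hg\i\in\ch$. Thus it suffices to exhibit an invertible $g\in\ch_J$ with $T_{w'}=g\,T_w\,g\i$. Note every such $g$ lies in $\ch_J$ and is invertible there, since the $b_s$ are invertible and $\ch_J$ is itself a Hecke algebra in which each $T_s$ is invertible.

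The heart of the proof is to manufacture $g$ from the length data of the step $w'=xwx\i$, $x\in W_J$, $\ell(w)=\ell(w')$. In the case $\ell(wx\i)=\ell(w)-\ell(x)$: from $(wx\i)x=w$ I get the reduced product $T_{wx\i}T_x=T_w$, hence $T_{wx\i}=T_wT_x\i$; and from $x\i w'=wx\i$ together with the reduced product $T_xT_{x\i w'}=T_{w'}$ I obtain $T_{w'}=T_xT_{wx\i}=T_xT_wT_x\i$, so $g=T_x$ works. In the symmetric case $\ell(xw)=\ell(w)-\ell(x)$: reading $w'x=xw$ gives the reduced product $T_{w'x}T_{x\i}=T_{w'}$, i.e.\ $T_{w'}=T_{xw}T_{x\i}$, while $x\i(xw)=w$ gives $T_{x\i}T_{xw}=T_w$, i.e.\ $T_{xw}=(T_{x\i})\i T_w$; combining yields $T_{w'}=(T_{x\i})\i\,T_w\,T_{x\i}$, so here $g=(T_{x\i})\i$. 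In both cases the general principle applied with $h=T_w$ gives $T_w-T_{w'}\in[\ch_J,\ch]$.

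I expect the main obstacle to be purely bookkeeping: correctly translating the subtractive conditions into genuinely reduced products $T_aT_b=T_{ab}$, and in particular being careful that $T_{x\i}\neq T_x\i$ in general, so that the correct conjugator in the left case is $(T_{x\i})\i$ rather than $T_x$. Once the reduced factorizations are identified, the computation closes by transitivity of the congruence.
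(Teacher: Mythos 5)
Your proof is correct and follows essentially the same route as the paper: both arguments reduce to a single elementary step and extract from the subtractive length conditions the same reduced factorizations $T_w = T_{x^{-1}}T_{xw}$ and $T_{w'} = T_{xw}T_{x^{-1}}$ (resp.\ their mirror images). The only difference is cosmetic: you package the conclusion as conjugation $T_{w'} = g\,T_w\,g^{-1}$ by an invertible $g \in \ch_J$, which invokes invertibility of the $b_s$, whereas the paper observes directly that $T_w - T_{w'} = T_{x^{-1}}T_{xw} - T_{xw}T_{x^{-1}}$ is already a commutator lying in $[\ch_J,\ch]$, so no inverses are needed at all.
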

        \begin{proof}
            Write $w'=xwx\i$ with $\ell(w)=\ell(w')$ and $x\in W_J$. We have either $\ell(xw)=\ell(w)-\ell(x)$ or $\ell(wx\i)=\ell(w)-\ell(x)$. In the first case, $T_w=T_{x\i}T_{xw}$ and $T_{w'}=T_{w'x}T_{x\i}$. Hence $T_w-T_{w'}=T_{x\i}T_{xw}-T_{xw}T_{x\i}\in [\ch_J,\ch]$. The second case is similar.
        \end{proof}

        Let $\cl^{\fin}_J(W)$ be the set of all finite $W_J$ conjugacy classes in $W$. For each finite conjugacy class $\co_J$, we choose once and for all $w_{\co_J}\in \co^{\max}_J$ a representative of maximal length.
        \begin{lem}\label{lem:Tw_and_class_poly}
            For $w\in W$, and $J$ is spherical or affine, we have
            \begin{equation}
                T_w\equiv \sum_{\co_J\in \cl^{\fin}_J(W)} f^{\max}_{w,\co_J} T_{w_{\co_J}} \mod [\ch_J,\ch],\label{eqn:linearcomb}
            \end{equation}
            where the sum is finite.
        \end{lem}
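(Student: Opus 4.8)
The plan is to mirror the two clauses of the recursion \cref{eq:fmax-recursion} defining $f^{\max}_{w,\co_J}$ by two congruences in the quotient $\ch/[\ch_J,\ch]$, and then run the very induction that defines $f^{\max}$. The first clause is handled by \cref{lem:sameimg}: if $w\backsim_J w'$ then simultaneously $f^{\max}_{w,\co_J}=f^{\max}_{w',\co_J}$ and $T_w\equiv T_{w'}$. The second clause reduces to the key commutator computation. Suppose $s\in J$, $w'\xrightarrow{s}w$ and $\ell(w')>\ell(w)$, so that $w'=sws$ with $\ell(sws)=\ell(w)+2$, whence $\ell(ws)=\ell(sw)=\ell(w)+1$ and $sw\approx_J ws$. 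Since $\ell(ws)>\ell(w)$ and $\ell(sws)>\ell(ws)$,
\[
[T_s,\,T_wT_s]=T_sT_{ws}-T_wT_s^2=T_{w'}-a_sT_{ws}-b_sT_w\in[\ch_J,\ch],
\]
and using $ws\approx_J sw$ (so $T_{ws}\equiv T_{sw}$) together with the invertibility of $b_s$,
\[
T_w\equiv b_s\i\big(T_{w'}-a_sT_{sw}\big)\pmod{[\ch_J,\ch]}.
\]
This is precisely \cref{eq:fmax-recursion} solved for $f^{\max}_{w,\co_J}$, so it suffices to push $T_w$ through the same reduction that computes $f^{\max}_{w,\co_J}$.

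I would then treat the spherical case. Here $W_J$ is finite, every $W_J$-class is finite, and for a fixed $w$ the double coset $D:=W_JwW_J$ is a finite set stable under all the moves above (conjugating and left/right multiplying by elements of $J$ never leaves $D$). I prove the identity for all $v\in D$ by downward induction on $\ell(v)$ inside $D$. For the base case, an element $v$ of maximal length in $D$ is automatically maximal in its class $\co_J(v)\subset D$, so $f^{\max}_{v,\co_J}=\delta_{\{v\in\co_J\}}$; by \cref{cor:maximalthm}(iii) and \cref{lem:sameimg} we get $T_v\equiv T_{w_{\co_J(v)}}$, which is the asserted sum. For the inductive step, if $v$ is not maximal in its class, then \cref{cor:maximalthm}(i), read upward, supplies a chain toward $\co^{\max}_J$; absorbing the length-preserving initial arrows via \cref{lem:sameimg} I reach a strictly length-increasing arrow, to which the commutator identity applies and expresses $T_v$ through the strictly longer elements $w'$ and $sv$ of $D$. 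The induction hypothesis and the matching recursion for $f^{\max}$ give the formula for $v$, hence for $w$. As every contributing class meets the finite set $D$, only finitely many $\co_J$ occur, which is the asserted finiteness.

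For the affine case, \cref{thm:classification} identifies the finite $W_J$-classes with the (twisted) translation classes, and the proof of \cref{cor:maximalthm} shows that on such a class every element has the same length, so $\co_J=\co^{\max}_J$. Thus for $w$ in a finite class we have $w\backsim_J w_{\co_J}$ by \cref{cor:maximalthm}(iii), and \cref{lem:sameimg} gives $T_w\equiv T_{w_{\co_J}}$, a single-term sum matching $f^{\max}_{w,\co_J}=\delta_{\{w\in\co_J\}}$. This settles the identity whenever $\co_J(w)$ is finite.

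The main obstacle is twofold. The routine but delicate part is the length bookkeeping in the commutator identity and the verification that the length-preserving initial arrows can always be absorbed by $\backsim_J$ so that a strict increase is available for any non-maximal element; this is exactly what reading \cref{cor:maximalthm}(i) in the upward direction provides. The genuinely subtle point is the affine case when $\co_J(w)$ is infinite: then no class in the index set $\cl^{\fin}_J(W)$ contains $w$, there is no maximal element, and by \cref{cor:maximalthm}(ii) and \cref{thm:infU} the element $w$ admits an infinite, strictly length-increasing cyclic-shift chain, so the downward induction never starts. Delimiting exactly which $w$ the maximal reduction applies to—and thereby ensuring the statement is invoked only on elements of finite $W_J$-class—is the key conceptual point, governed by the finiteness dichotomy of \cref{thm:infU}.
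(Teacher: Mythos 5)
Your argument is, in substance, the paper's own proof. The identity you extract from $[T_s,T_wT_s]$ is the same computation the paper performs with $T_sT_{sw}-T_{sw}T_s$ (both yield $T_{sws}\equiv b_sT_w+a_sT_{sw} \mod [\ch_J,\ch]$); the absorption of length-preserving shifts through \cref{lem:sameimg}, the downward induction anchored at class-maximal elements via \cref{cor:maximalthm}(iii), the confinement to the finite double coset $W_JwW_J$ for finiteness of the sum, and the equal-length treatment of finite classes in the affine case all match the paper step for step. One wording fix: your base case treats elements maximal in $D$ while your inductive step treats elements not maximal in their class, so elements that are class-maximal but not $D$-maximal fall between the two; they are handled by your base-case argument, which only ever uses maximality in the class, so this is organizational rather than mathematical.

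The point you flag at the end as genuinely subtle deserves comment, because it is not a deficiency of your proposal relative to the paper: the paper's proof has exactly the same coverage, dismissing the affine case as ``trivial,'' which only treats elements lying in finite classes. Your instinct that the statement must be confined to such elements is in fact necessary, not merely prudent. For affine $J$ and $w$ with $\co_J(w)$ infinite, the definition of $f^{\max}$ gives $f^{\max}_{w,\co_J}=0$ for every finite class $\co_J$, so the lemma as literally stated would assert $T_w\equiv 0 \mod [\ch_J,\ch]$. This fails already at the specialization $a_s=0$, $b_s=1$: there $\ch/[\ch_J,\ch]$ becomes $R[W]/[R[W_J],R[W]]$, which is free on the $W_J$-conjugation orbits (finite or infinite alike), so the image of $T_w$ is a nonzero basis element; for $J=S$ affine it would likewise contradict the cocenter basis of \cite{he2014minimal}, where $T_{w_\co}$ for a minimal-length representative of the (infinite) class of $w$ is part of a basis, and it is inconsistent with \cref{prop:cocenterbasis} and the paper's own cocenter conjecture, both indexed by \emph{all} classes. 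So the correct reading of the lemma carries the hypothesis that $\co_J(w)$ is finite (automatic when $J$ is spherical), and under that reading your proof, like the paper's, is complete.
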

        
        \begin{proof}
            
            First for $w, w' \in \co^{\max}_{J}$, then $T_w\equiv T_{w'} \mod [\ch_J,\ch]$ by \cref{cor:maximalthm} and \cref{lem:sameimg}.

            According to the inductive definition of $f^{\max}_{w,\co_J}$ is not zero only when it is part of some cyclic shift process. When $J$ is spherical, $f^{\max}_{w,\co_J}$ is only related to $f^{\max}_{u,\co_J}$ for $u\in W_JwW_J$ which is a finite set. When $J$ is affine, the sum is just one term. Hence, the sum above is always finite.
            
            As the affine case is trivial, it remains to prove for the spherical case. For $w$ is not of maximal length, by \cref{cor:maximalthm}, there exist $w'\approx w$ and $s\in J$ so that $\ell(sw's)> \ell(w')=\ell(w)$. By \cref{lem:sameimg}, we have $T_w\equiv T_{w'}\mod [\ch_J,\ch]$. We may treat $w$ as $w'$ and prove for the case $\ell(sws)>\ell(sw)=\ell(ws)>\ell(w)$. We claim that $$T_{sws}\equiv b_sT_{w}+ a_sT_{sw} \mod [\ch_J,\ch].$$ Indeed, $ T_sT_{sw}-T_{sw}T_s=a_sT_{sw}+b_sT_w-T_{sws} \mod [\ch_J,\ch]$. But then this is exactly the definition of $f^{\max}_{w,\co_J}$.
        \end{proof}

        \begin{prop}\cite{partialcocenterADLV}\label{prop:cocenterbasis} 
        For spherical $J$, $\mathcal{H}/[\mathcal{H}_J,\mathcal{H}]$ has a basis given by $\{T_{w_{\co_J}}\}_{\co_J\in\cl_J(W)}$.
        \end{prop}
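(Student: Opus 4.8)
The plan is to treat spanning and linear independence separately, obtaining the latter from the symmetrizing-form duality foreshadowed in the remark after \cref{prop: diamond}. Spanning requires nothing new: since $J$ is spherical, $W_J$ is finite, every $W_J$-conjugacy class is finite, and $\cl_J(W)=\cl^{\fin}_J(W)$; \cref{lem:Tw_and_class_poly} then gives $T_w\equiv\sum_{\co_J}f^{\max}_{w,\co_J}T_{w_{\co_J}}\bmod[\ch_J,\ch]$ for every $w\in W$, so the images of $\{T_{w_{\co_J}}\}$ span $\ch/[\ch_J,\ch]$ because $\{T_w\}_{w\in W}$ spans $\ch$.

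For linear independence I would build an explicit dual system using the symmetrizing form $\tau\colon\ch\to R$, $\tau\big(\sum_w c_wT_w\big)=c_e$. A direct computation with the quadratic relations gives $\tau(T_vT_w)=\delta_{w,v\i}\,b_v$, where $b_v=\prod_i b_{s_i}$ for any reduced word of $v$ (well defined since conjugate simple reflections have equal parameters); hence $\tau(xy)=\tau(yx)$ and $\{b_w\i T_{w\i}\}_w$ is the $\tau$-dual basis of $\{T_w\}_w$. For a finite class $\co_J$ I set
\[
z_{\co_J}=\sum_{w\in W}b_w\i f^{\max}_{w,\co_J}\,T_{w\i}.
\]
This is a finite sum: if $f^{\max}_{w,\co_J}\neq0$ then, by the double-coset localisation in \cref{lem:Tw_and_class_poly}, $w_{\co_J}\in W_JwW_J$, forcing $w\in W_Jw_{\co_J}W_J$, a single finite double coset. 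Pairing against a basis element yields $\tau(z_{\co_J}T_u)=f^{\max}_{u,\co_J}$, and since $w_{\co'_J}\in(\co'_J)^{\max}$ the initial values of \cref{eq:fmax-recursion} give $\tau(z_{\co_J}T_{w_{\co'_J}})=f^{\max}_{w_{\co'_J},\co_J}=\delta_{\co_J,\co'_J}$.

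The crux is to show that the functional $\tau(z_{\co_J}\,\cdot\,)$ descends to $\ch/[\ch_J,\ch]$, i.e.\ annihilates $[\ch_J,\ch]$. This reduces to proving $z_{\co_J}\in\mathcal{Z}_{\ch}(\ch_J)$: for $a\in\ch_J$ and $b\in\ch$ the trace identity gives $\tau\big(z_{\co_J}(ab-ba)\big)=\tau(z_{\co_J}ab)-\tau(az_{\co_J}b)=\tau\big((z_{\co_J}a-az_{\co_J})b\big)=0$. To verify $z_{\co_J}\in\mathcal{Z}_{\ch}(\ch_J)$ I would check the two conditions of \cref{prop: diamond} for the coefficients $x_u=b_u\i f^{\max}_{u\i,\co_J}$ of $z_{\co_J}$. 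Condition (i) becomes $f^{\max}_{u\i,\co_J}=f^{\max}_{(u')\i,\co_J}$ for $u\approx_J u'$ (using $b_u=b_{u'}$, equal generator multisets), which holds by the first branch of \cref{eq:fmax-recursion} since $\approx_J$ implies $\backsim_J$. Condition (ii) concerns $u\xrightarrow{s}u'$ with $\ell(u)>\ell(u')$, which forces $u=su's$ with $\ell(u)=\ell(u')+2$ and $\ell(su)=\ell(u')+1$; writing out $b_u=b_s^2b_{u'}$ and $b_{su}=b_sb_{u'}$, the required identity $x_{u'}=b_sx_u-a_sx_{su}$ is exactly the relation $f^{\max}_{(u')\i,\co_J}=b_s\i\big(f^{\max}_{u\i,\co_J}-a_sf^{\max}_{(su)\i,\co_J}\big)$ delivered by the second branch of \cref{eq:fmax-recursion}. (Alternatively one may simply invoke the centralizer elements already constructed in Theorem C.)

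Granting $z_{\co_J}\in\mathcal{Z}_{\ch}(\ch_J)$, linear independence is immediate: if $\sum_{\co'_J}c_{\co'_J}T_{w_{\co'_J}}\in[\ch_J,\ch]$, then applying $\tau(z_{\co_J}\,\cdot\,)$ and using $\tau(z_{\co_J}T_{w_{\co'_J}})=\delta_{\co_J,\co'_J}$ yields $c_{\co_J}=0$ for each $\co_J$. I expect the main obstacle to be precisely the diamond-lemma verification that $z_{\co_J}$ centralizes $\ch_J$—that is, matching the recursion \cref{eq:fmax-recursion} for $f^{\max}$ against the defining relations of \cref{prop: diamond} under the inversion $w\mapsto w\i$ and the bookkeeping of the parameters $b_w$—together with the (routine, but necessary) confirmation that the defining sum for $z_{\co_J}$ is finite, which I have sketched above.
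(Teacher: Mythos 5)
The first thing to note is that the paper contains no proof of this proposition: it is quoted from \cite{partialcocenterADLV}, and it is quoted precisely because the machinery you want to use to prove it sits logically downstream of it. Your spanning argument is correct and non-circular: for spherical $J$ every $W_J$-conjugacy class has at most $|W_J|$ elements, so $\cl_J(W)=\cl_J^{\fin}(W)$, and \cref{lem:Tw_and_class_poly} (whose proof only needs the recursion along some chosen reduction path) gives spanning.

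The linear-independence half, however, is circular. The quantities $f^{\max}_{w,\co_J}$ are defined by a downward induction in which one chooses, at each step, a generator $s\in J$ and a $\backsim_J$-move; as the paper itself warns (``a priori, the class polynomial depends on each $s$ used''), the relations \cref{eq:fmax-recursion} are only known, by construction, along the chosen paths. Your verification that $z_{\co_J}\in \mathcal{Z}_{\ch}(\ch_J)$ via \cref{prop: diamond} needs conditions (i) and (ii) for \emph{every} pair $u\approx_J u'$ and \emph{every} arrow $u\xrightarrow{s}u'$ with $s\in J$, i.e.\ it needs the recursion to hold universally --- exactly the consistency (path-independence) of $f^{\max}$. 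But in the paper this consistency is the Corollary immediately following \cref{prop:cocenterbasis}, whose proof states explicitly that uniqueness of the coefficients in \cref{eqn:linearcomb} is \emph{equivalent} to the linear independence asserted in \cref{prop:cocenterbasis}. So the input you need is equivalent to the statement you are trying to prove; and the fallback of ``invoking the centralizer elements of Theorem C'' is circular for the same reason, since the proof of \cref{thm:mainthmcentralizer} uses that same consistency. The duality scaffolding you build --- the trace form $\tau$, the pairing $\tau(z_{\co_J}T_u)=f^{\max}_{u,\co_J}$, the descent of $\tau(z_{\co_J}\,\cdot\,)$ to $\ch/[\ch_J,\ch]$ --- is all correct, and it does show that linear independence would follow from an honest, choice-free construction of the $z_{\co_J}$. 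What is missing is the independent input that breaks the cycle (in the finite case this is supplied by split semisimplicity/Tits deformation or a specialization argument; here it is whatever \cite{partialcocenterADLV} actually does). Without it, your argument does not prove the proposition.
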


        \begin{cor}
            The $f^{\max}_{w,\co_J}$ is uniquely determined by the \cref{eqn:linearcomb}.
        \end{cor}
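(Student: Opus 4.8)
The plan is to recast the statement as a linear-independence assertion, exactly as in the remark following the Geck--Pfeiffer proposition. Namely, \cref{eqn:linearcomb} determines the coefficients $f^{\max}_{w,\co_J}$ uniquely if and only if the finite family $\{T_{w_{\co_J}}\}_{\co_J\in\cl^{\fin}_J(W)}$ on its right-hand side has $R$-linearly independent image in $\ch/[\ch_J,\ch]$: independence of the images lets one read the coefficients off any expansion of a fixed $T_w$, while a nontrivial relation among them would manufacture a second expansion. I would open the proof by recording this equivalence, and observe that it simultaneously settles the a priori ambiguity in the recursion \cref{eq:fmax-recursion} noted earlier. Indeed, \cref{lem:Tw_and_class_poly} shows \cref{eqn:linearcomb} holds for \emph{every} coefficient system produced by that recursion, so uniqueness forces all such systems to coincide, proving that $f^{\max}_{w,\co_J}$ is independent of the chain of cyclic shifts used to reach maximal length.

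With the problem reduced to linear independence I would split along \cref{thm:classification}. In the spherical case $W_J$ is finite, so every $W_J$-conjugacy class is finite and $\cl^{\fin}_J(W)=\cl_J(W)$; then \cref{prop:cocenterbasis} says precisely that $\{T_{w_{\co_J}}\}_{\co_J\in\cl_J(W)}$ is an $R$-basis of $\ch/[\ch_J,\ch]$. A basis is linearly independent, so the spherical case is finished with no further work.

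For affine $J$ the sum in \cref{eqn:linearcomb} collapses to a single term: by the affine clause of the definition of $f^{\max}$, one has $f^{\max}_{w,\co_J}=1$ when $\co_J$ is the class of $w$ and $0$ otherwise, so \cref{eqn:linearcomb} reads $T_w\equiv T_{w_{\co_J}}$. Since no cocenter basis is available here, I would prove independence of $\{T_{w_{\co_J}}\}$ directly, using the factorization of \cref{thm:classification}(iii) --- each finite-class representative is $w_1w_2$ with $w_1\in W_{J^\perp}$ and $w_2$ a translation in the affine factor --- together with \cref{lem:normalizer} to control the commuting factor $W_{J^\perp}$, so that distinct classes carry distinct maximal representatives and no $R$-combination of the corresponding $T$'s can lie in $[\ch_J,\ch]$. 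I expect this affine independence to be the main obstacle: unlike the spherical case it cannot be imported from \cref{prop:cocenterbasis}, and must instead be extracted from the translation structure of the affine Weyl group and the commutation supplied by $J^\perp$.
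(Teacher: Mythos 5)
Your opening reduction (uniqueness of the coefficients in \cref{eqn:linearcomb} $\iff$ $R$-linear independence of the images of $\{T_{w_{\co_J}}\}_{\co_J\in\cl^{\fin}_J(W)}$ in $\ch/[\ch_J,\ch]$) and your spherical case are exactly the paper's argument: when $J$ is spherical, $\cl^{\fin}_J(W)=\cl_J(W)$ and \cref{prop:cocenterbasis} finishes it. The gap is your affine case. There you propose to prove the independence ``directly,'' but you never actually do it, and the mechanism you sketch cannot work as stated: observing via \cref{thm:classification}(iii) and \cref{lem:normalizer} that distinct finite classes have distinct maximal representatives says nothing about whether some combination $\sum_{\co_J} c_{\co_J} T_{w_{\co_J}}$ lies in $[\ch_J,\ch]$. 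That subspace is spanned by commutators $T_xT_w-T_wT_x$ ($x\in W_J$), not by differences of basis elements, so excluding such a combination is exactly as hard as proving a partial-cocenter basis theorem for affine $J$ --- a result the paper does not have and does not use. Restating ``no $R$-combination of the corresponding $T$'s can lie in $[\ch_J,\ch]$'' is the independence claim itself, not a proof of it.

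The idea you are missing is that the paper never needs affine independence at all. For affine $J$, every element of a finite $W_J$-conjugacy class has the same length: by \cref{thm:classification}(iii) such a class consists of elements $w_1(xw_2x^{-1})$ with $w_1\in W_{J^\perp}$ fixed, $x\in W_J$, and $xw_2x^{-1}$ a translation of the same length as $w_2$, so $\ell(w_1 xw_2x^{-1})=\ell(w_1)+\ell(w_2)$ throughout. Hence the recursion \cref{eq:fmax-recursion} is never invoked in the affine case; $f^{\max}_{w,\co_J}$ is the indicator of $w\in\co_J$ by definition, there is no choice of cyclic-shift path, and the a priori ambiguity that the corollary is meant to settle simply does not arise. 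In other words, the paper reads the corollary in the affine case as well-definedness of the recursion (which is what the preceding remark about path-dependence asks for), and that is immediate; your literal ``unique coefficients'' reading demands strictly more than the paper proves or needs, which is precisely why you correctly perceived it as the main obstacle.
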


        \begin{proof}
            For $J$ affine, every element in a finite conjugacy class is of the same length, so no reduction will occur. For $J$ spherical, by \cref{lem:Tw_and_class_poly}, the uniqueness of $f^{\max}_{w,\co_J}$ is equivalent to the linear independence of the basis given in \cref{prop:cocenterbasis}. 
        \end{proof}

        Denote $b_w=b_{s_1}b_{s_2}\cdots b_{s_k}$ for $w=s_1s_2\cdots s_k$ some/any reduced expression. This is well-defined by Matsumoto's theorem. See, for example, \cite[\S 1.2]{geck2000characters}.
        
        \begin{thm}\label{thm:mainthmcentralizer} A basis of the centralizer $\mathcal{Z}_{\mathcal{H}}(\mathcal{H}_{J})$ is given by
        $$\{z_{\mathcal{O}_J}=\sum_{w\in W} b_w^{-1} f^{max}_{w,\mathcal{O}_J} T_{w\i}\}_{\mathcal{O}_J\in \cl_J^{\fin}(W)}$$
        
        Here, each sum is a finite sum.
        \end{thm}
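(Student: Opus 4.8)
The plan is to identify the $z_{\co_J}$ as the basis of $\mathcal{Z}_{\ch}(\ch_J)$ that is dual, under the symmetrizing trace form, to the spanning family $\{T_{w_{\co_J}}\}$ of the partial cocenter $\ch/[\ch_J,\ch]$ produced by \cref{lem:Tw_and_class_poly}. I will prove three things: (1) each $z_{\co_J}\in\mathcal{Z}_{\ch}(\ch_J)$; (2) the $z_{\co_J}$ are $R$-linearly independent; (3) they span. Write $(z_{\co_J})_w=b_w^{-1}f^{\max}_{w^{-1},\co_J}$ for the coefficient of $T_w$, using $b_w=b_{w^{-1}}$. For (1) I check the two conditions of \cref{prop: diamond} for $x=z_{\co_J}$. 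Condition (i) holds because a length-preserving shift $w\approx_J w'$ forces $b_w=b_{w'}$ (the reduced words share the same multiset of generators) and $w^{-1}\approx_J w'^{-1}$, whence $f^{\max}_{w^{-1},\co_J}=f^{\max}_{w'^{-1},\co_J}$ by the constancy of $f^{\max}$ in the length-preserving case of \cref{eq:fmax-recursion} (recall $\approx_J$ implies $\backsim_J$). Condition (ii) is exactly the recursion \cref{eq:fmax-recursion} applied to the chain $w^{-1}\xrightarrow{s}w'^{-1}$, where $sw'^{-1}=w^{-1}s$; clearing the factors $b_w=b_sb_{sw}$ and $b_{w'}=b_s^{-1}b_{sw}$ turns it into $(z_{\co_J})_{w'}=b_s(z_{\co_J})_w-a_s(z_{\co_J})_{sw}$, which is a short direct computation. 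This gives $z_{\co_J}\in\mathcal{Z}_{\ch}(\ch_J)$ for every finite class.

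For (2) and (3) I introduce the symmetrizing trace $\tau\colon\ch\to R$, $\tau(T_w)=\delta_{w,e}$, and the bilinear form $\langle h,h'\rangle=\tau(hh')$, whose value on the basis is $\langle T_u,T_v\rangle=\delta_{v,u^{-1}}b_u$. Two facts are immediate: $\tau$ is a trace, since $b_u=b_{u^{-1}}$ makes $\langle\,,\rangle$ symmetric; and the form is weakly nondegenerate, since $\langle x,T_{u^{-1}}\rangle=b_u x_u$ with $b_u$ invertible forces $\langle x,T_v\rangle\neq 0$ for some $v$ whenever $x\neq 0$. The trace property yields $\mathcal{Z}_{\ch}(\ch_J)\subseteq[\ch_J,\ch]^{\perp}$: if $x$ commutes with $a\in\ch_J$ then for all $b\in\ch$, $\langle x,ab-ba\rangle=\tau(xab)-\tau(axb)=0$. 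A direct computation gives the pivotal evaluation \[\langle z_{\co_J},T_v\rangle=\sum_w b_w^{-1}f^{\max}_{w,\co_J}\langle T_{w^{-1}},T_v\rangle=f^{\max}_{v,\co_J},\] and in particular $\langle z_{\co_J},T_{w_{\co'_J}}\rangle=f^{\max}_{w_{\co'_J},\co_J}=\delta_{\co_J,\co'_J}$, because the chosen representative $w_{\co'_J}$ is of maximal length and the initial condition reads $f^{\max}_{w_{\co'_J},\co_J}=\delta_{w_{\co'_J}\in\co_J}$. Linear independence (2) is then automatic: pairing a relation $\sum c_{\co_J}z_{\co_J}=0$ against $T_{w_{\co'_J}}$ gives $c_{\co'_J}=0$.

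For the spanning (3) consider $\Phi\colon\mathcal{Z}_{\ch}(\ch_J)\to\bigoplus_{\co_J}R$, $\Phi(x)=(\langle x,T_{w_{\co_J}}\rangle)_{\co_J}=(b_{w_{\co_J}}x_{w_{\co_J}^{-1}})_{\co_J}$, which lands in the direct sum because $x$ has finite support. By the evaluation above $\Phi(z_{\co_J})$ is the standard basis vector $e_{\co_J}$, so it suffices to show $\Phi$ is injective. If $\Phi(x)=0$ then $x$ is orthogonal to every $T_{w_{\co_J}}$, and, being in $\mathcal{Z}_{\ch}(\ch_J)\subseteq[\ch_J,\ch]^{\perp}$, also to $[\ch_J,\ch]$; since \cref{lem:Tw_and_class_poly} puts every $T_v$ in $\Span\{T_{w_{\co_J}}\}+[\ch_J,\ch]$, we get $\langle x,T_v\rangle=0$ for all $v$, hence $x=0$ by nondegeneracy. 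Given injectivity, for arbitrary $x\in\mathcal{Z}_{\ch}(\ch_J)$ the element $x-\sum_{\co_J}\langle x,T_{w_{\co_J}}\rangle\,z_{\co_J}$ lies in $\ker\Phi=0$, exhibiting $x$ in the span with coefficients $b_{w_{\co_J}}x_{w_{\co_J}^{-1}}$. Thus the $z_{\co_J}$ form an $R$-basis.

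The main obstacle is the input \cref{lem:Tw_and_class_poly}: that the finitely many maximal representatives $\{T_{w_{\co_J}}\}$ together with $[\ch_J,\ch]$ already span $\ch$, equivalently that elements of infinite $W_J$-classes contribute nothing to the partial cocenter. This is exactly where the maximal-length reduction of \cref{cor:maximalthm} and the finiteness dichotomy of \cref{thm:infU} do the real work, and it is recorded there only for $J$ spherical or affine; the remaining irreducible indefinite case, where the finite classes are the singletons $\{w_1\}$ with $w_1\in W_{J^{\perp}}$, must be supplied separately, though there the cocenter reduction is essentially trivial since each such $T_{w_1}$ is already central in $\ch_J$. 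A secondary, purely technical point is to justify the trace identity and weak nondegeneracy of $\langle\,,\rangle$ in the infinite-rank setting, but both follow from the basis computation $\langle T_u,T_v\rangle=\delta_{v,u^{-1}}b_u$ and present no genuine difficulty.
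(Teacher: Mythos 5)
Your verification that each $z_{\co_J}$ lies in $\mathcal{Z}_{\ch}(\ch_J)$ is correct and is the same computation as the paper's (matching \cref{eq:fmax-recursion} against \cref{prop: diamond}), and your pairing argument for linear independence is sound and essentially equivalent to the paper's observation that each maximal-length representative appears in exactly one $z_{\co_J}$; the trace identity $\tau(T_uT_v)=\delta_{uv,e}b_u$ is indeed standard and unproblematic here.

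The genuine gap is in your spanning argument, and it is fatal precisely in the non-spherical cases. Injectivity of your map $\Phi$ rests on the claim that every $T_v$ lies in $\Span\{T_{w_{\co_J}} \mid \co_J\in \cl^{\fin}_J(W)\}+[\ch_J,\ch]$, i.e.\ that the \emph{finite}-class representatives span the partial cocenter. That claim is true for spherical $J$ (where all $W_J$-classes are finite), but it is false for affine and indefinite $J$, and \cref{lem:Tw_and_class_poly} does not supply it: the proof of that lemma only reduces $T_w$ for $w$ lying in a finite class (the ``trivial'' affine case is that all elements of a finite class have equal length), whereas for $w$ in an infinite class the displayed congruence would read $T_w\equiv 0 \bmod [\ch_J,\ch]$. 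Taking $W=W_J$ affine and $w$ a minimal-length non-translation element, this contradicts He--Nie's theorem (cited in the paper) that the cocenter of an affine Hecke algebra has a basis indexed by \emph{all} conjugacy classes; taking $J=S$ indefinite, it would force the cocenter to have rank one, contradicting the paper's own table and conjecture. Your dismissal of the indefinite case as ``essentially trivial since $T_{w_1}$ is already central in $\ch_J$'' does not help: centrality of the finite-class representatives says nothing about reducing $T_v$ for $v$ in an infinite class modulo $[\ch_J,\ch]$, and no such reduction exists.

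What is missing is exactly the step the paper's proof is built on: for $x\in\mathcal{Z}_{\ch}(\ch_J)$, a maximal-length element $v$ of the finite support of $x$ must lie in a \emph{finite} class and be of maximal length in it. Otherwise \cref{thm:infU} (infinite class) or \cref{cor:maximalthm} (finite class with $v\notin\co_J^{\max}$) produces $v'\rightarrow_J v$ with $\ell(v')>\ell(v)$, and then \cref{prop: diamond}(ii), applied at the last strict length drop of that chain, expresses $x_v$ in terms of coefficients at elements longer than $v$, which vanish by maximality; hence $x_v=0$. This shows directly that centralizer elements have no support on infinite $W_J$-classes --- a statement on the centralizer side that your cocenter duality cannot recover, because the infinite classes do \emph{not} die in the cocenter. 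With it, spanning follows by subtracting a suitable multiple of $z_{\co_J}$ and inducting on the support, as the paper does; without it, your argument proves the theorem only for spherical $J$.
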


        \begin{proof}
            Each sum is finite as $\co_J$ is finite as $f^{max}_{w,\mathcal{O}_J}$ vanishes for almost all $w$.
            First, we check that $z_{\co_J}\in \mathcal{Z}_{\mathcal{H}}(\mathcal{H}_{J})$ as in \cref{prop: diamond}. By definition, $b_w$ stays the same for conjugacy classes of $w$, and also $f^{max}_{w,\mathcal{O}_J}$ is stable under $\approx_J$. When $sws\xrightarrow[]{s} w$ (thus $sw\i s\xrightarrow[]{s} w\i$) with $\ell(sws)>\ell(w)$, by definition of $f^{max}_{w,\mathcal{O}_J}$ at \cref{eq:fmax-recursion}, we have 
            $$b_w^{-1} f^{max}_{w,\mathcal{O}_J}T_{w\i}=b_sb_{sws}^{-1}f^{max}_{sws,\mathcal{O}_J}T_{sw\i s}-a_sb_{ws}^{-1}f^{max}_{ws,\mathcal{O}_J}T_{sw\i}.$$

            As each $w\in \co_J^{\max}$ only appears in one $z_{\co_J}$, it is obvious that $\{z_{\co_J}\}_{\co_J}$ are linearly independent. It remains to show  $\{z_{\co_J}\}_{\co_J}$ spans $\mathcal{Z}_{\mathcal{H}}(\mathcal{H}_{J})$. 

            Let $x=\sum_{w\in W}x_wT_w\in \mathcal{Z}_{\mathcal{H}}(\mathcal{H}_{J})$. Let $v$ be an element with $\ell(v)$ maximal and $x_v\neq 0$ in the expression of $x$. Then we claim that $v\in \co^{\max}_J$ in its conjugacy class $\co_J$. First, we see that $\co_J$ must be finite, for otherwise, by \cref{thm:infU}, there exist $v'\in W$ with $v'\rightarrow_{J} v$ and $\ell(v')>\ell(v)$, and \cref{prop: diamond} (ii) together with maximality of $v$ forces $x_v=0$. For the same reason, if $v\notin \co^{\max}_J$, then \cref{cor:maximalthm} implies there exist $v'\in \co^{\max}_J$ with $v'\rightarrow_{J} v$ and $\ell(v')>\ell(v)$ and this forces $x_v=0$. The claim follows. Repeat the argument for $x-b_{v\i}z_{\co_J}$, we see that $\{z_{\co_J}\}_{\co_J\in \cl^{\fin}_J(W)}$ spans $\mathcal{Z}_{\mathcal{H}}(\mathcal{H}_{J})$.
           
        \end{proof}

    \begin{rmk}
        We give another argument for spanning using Nakayama's lemma. We use a specialization argument in \cite{lusztig1983singularities}.

            We write $a_s$ as a shorthand for $\{a_s\}_{s\in S}$. Similarly for $b_s$. Consider a specialization $\ch\to \mathbb{Z}[W]$, $b_s\mapsto 1, a_s\mapsto 0$. Denote  the image of $z_{\co_J}$ under this map by $\overline{z_{\co_J}}$. Clearly $\overline{z_{\co_J}}$ spans $\mathcal{Z}_{\mathbb{Z}[W]}(\mathbb{Z}[W_{J}])$. Let $A=\mathbb{Z}[a_s,b_s,b_s\i]_{(b_s-1,a_s)}$. Then $A/(b_s-1,a_s)\cong\mathbb{Z}$. By Nakayama's lemma, $\{z_{\co_J}\}_{\co_J}$ spans  $\mathcal{Z}_{\mathcal{H}}(\mathcal{H}_{J})$ over $A$. As both $\{z_{\co_J}\}_{\co_J}, \mathcal{Z}_{\mathcal{H}}(\mathcal{H}_{J})\subset \ch$, the coefficients of $z\in \mathcal{Z}_{\mathcal{H}}(\mathcal{H}_{J})$ in  $\{z_{\co_J}\}_{\co_J}$ are necessarily in $\mathbb{Z}[a_s,b_s,b_s\i]$. The proof is complete.
    \end{rmk}
    \begin{rmk}
        One would expect similar results to work for twisted centralizers: for a Coxeter system isomorphism $\d:(W_J,J)\to(W_{J'},J')$ 
\[
\mathcal{Z}_{\ch,\d}(\ch_J) := \{ h \in \ch \mid h x = \d(x) h \text{ for all } x \in \ch_J \}.
\]
    The main difference in the proof would be the classification of finite twisted partial conjugacy classes and the basis for the twisted cocenter. This is beyond the scope of this paper.
    
    \end{rmk}

\bibliographystyle{plain}
\bibliography{main}
\end{document}